\newtheorem{thm}{Theorem}[section]
\newtheorem{dfn}[thm]{Definition}
\newtheorem{prop}[thm]{Proposition}
\newtheorem{cor}[thm]{Corollary}
\newtheorem{lem}[thm]{Lemma}
\newtheorem{rem}[thm]{Remark}
\newtheorem{clm}[thm]{Claim}
\theoremstyle{remark}
\numberwithin{equation}{section}
\newcommand{\ind}{\mathrm{ind}}
\newcommand{\knl}{\mathrm{Ker}}
\newcommand{\homo}{\mathrm{Hom}}
\newcommand{\res}{\mathrm{Res}}
\begin{document}

\title{An explicit formula for the transversal indices of the lifted Dolbeault operators}
\author{Naoto Miseki}
\date{}
\maketitle

\begin{abstract}
In \cite{Atiyahtrans}, Atiyah proved that the index of a transversally elliptic operator relative to a free action can be computed by using indices of elliptic operators on the orbit manifold.
In this paper, we derive an explicit formula for the transversal indices on $S^1$-bundles over $\mathbb{CP}^n$.
Using this explicit formula and the index map, we prove Lefschetz formula for $\mathbb{CP}^n$ with the canonical action of $T^{n + 1}$.
\end{abstract}

\tableofcontents

\section{Introduction}

For elliptic pseudo-differential operators, Atiyah--Singer index theorem allows us to compute the analytic indices of them by using the topological formula. When $A$ is a transversally elliptic operator relative to an action of a compact Lie group $G$, there are also cohomological formulae, which are given by Berline and Vergne \cite{Vergne96}, Paradan and Vergne \cite{Vergne08}. However these formulae do not allow us to compute indices of transversally elliptic operators easily.
This paper gives explicit expressions of the analytic transversal indices of concrete examples and their application.

In general, it is not easy to compute transversal indices, however operators which are transversally elliptic relative to free actions can be reduced to elliptic operators on the orbit manifolds.
Let us briefly explain that property.
Let $G$ and $H$ be two compact Lie groups and $P$ be a compact manifold on which $G \times H$ acts smoothly.
We assume that the action of $H$ is free. Then $M := P/H$ is a compact manifold which is provided with an action of $G$ and the quotient map $p \colon P \to M$ is a principal $H$-bundle and $G$-equivariant.
Let
\[
  A \colon \Gamma(M,\, \mathcal{E}^{+}) \to \Gamma(M,\, \mathcal{E}^-)
\]
be a $G$-invariant elliptic operator on $M$. If $P$ is trivial, i.e., $P \cong M \times H$, $A$ lifts to $\tilde{A}$ on $P$ naturally. In the general case, using local trivializations of $P$ and a partition of unity on $M$, and averaging on $G$, $A$ on $M$ lifts to 
\[
  \tilde{A} \colon \Gamma(P,\, p^*\mathcal{E}^{+}) \to \Gamma(P,\, p^*\mathcal{E}^-) ,
\]
which is a $G \times H$-invariant $H$-transversally elliptic operator on $P$ since $A$ is elliptic on $M$.
Conversely, at the level of $K$-groups, $G \times H$-invariant $H$-transversally elliptic operators on $P$ can be represented by lifts of $G$-invariant elliptic operators on $M$. See \ref{freeaction} for details.

We denote by $\mathcal{D}'(G \times H)$ the space of distributions on $G \times H$.
In \cite{Atiyahtrans}, Atiyah proved that the transversal index of the lifted operator $\tilde{A}$ is given by
\[
  \ind_{G \times H} \tilde{A} = \sum_{\tau \in \hat{H}} \ind_G (A \otimes \underline{V_{\tau^*}} ) \chi_{\tau}  
    \quad \in \mathcal{D}'(G \times H) ,
\]
where $\hat{H}$ is the set of equivalence classes of finite dimensional irreducible (complex) representations of $H$, $\chi_{\tau}$ is the character of a representation $(\tau,\, V_{\tau}) \in \hat{H}$. $\underline{V_{\tau^*}}$ is a complex vector bundle over $M$ defined by $P \underset{H}{\times} V_{\tau^*}$ and $A \otimes \underline{V_{\tau^*}}$ denotes the operator whose principal symbol is
\[
  \sigma(A) \otimes \rm{Id_{\underline{V_{\tau^*}}}} \colon \pi^*(\mathcal{E}^+ \otimes \underline{V_{\tau^*}}) 
    \to \pi^* (\mathcal{E}^- \otimes \underline{V_{\tau^*}}) ,
\]
which is $G$-invariant elliptic, where $\sigma(A)$ is the principal symbol of $A$, and $\pi \colon T^*M \to M$ is the natural projection.
In this way, the index of a transversally elliptic operator relative to a free action can be computed by using indices of elliptic operators on the orbit manifold.

Let $\mathcal{O}(-1)$ be the tautological line bundle over $\mathbb{CP}^n$ and $\mathcal{O}(k) := \mathcal{O}(-1)^{\otimes -k} \, (k \in \mathbb{Z})$. 
We denote by $S(\mathcal{O}(k))$ the circle bundle of $\mathcal{O}(k)$ with respect to a certain Hermitian metric.
Then $p_k \colon S(\mathcal{O}(k)) \to \mathbb{CP}^n$ is a principal $S^1$-bundle. We give an action of $T^{n+1}$ on $S(\mathcal{O}(k))$ which is the lift of the canonical action of $T^{n+1}$ on $\mathbb{CP}^n$:
\begin{align*}
T^{n+1} \ni (u_1,\, \dots ,\, u_{n+1}) \colon [z_1 : \dots : z_{n+1}] \mapsto [u_1z_1 : \dots : u_{n+1}z_{n+1}] .
\end{align*}
We consider the Dirac operator $D_n := \bar{\, \partial} + \bar{\, \partial}^*$ on $\mathbb{CP}^n$ associated with the Dolbeault operator, which is a $T^{n+1}$-invariant elliptic operator, and defines an element $[ D_n ]$ of $K_{T^{n+1}} (T^*\mathbb{CP}^n)$.
As explained above, the Dirac operator $D_n$ lifts to a $T^{n+1} \times S^1$-invariant $S^1$-transversally elliptic operator on $S(\mathcal{O}(k))$ which defines an element of $K_{T^{n+1} \times S^1} (T_{S^1}^* S(\mathcal{O}(k)))$. We denote the element by $p_k^* [ D_n ]$. Here $T_{S^1}^*S(\mathcal{O}(k))$ is a subset of cotangent vectors which vanish on tangent vectors to the $S^1$-orbits. 
Our main theorem is the following:
\begin{thm} \label{mainthm}
  The transversal index of $p_k^* [ D_n ] \in K_{T^{n+1} \times S^1} (T_{S^1}^* S(\mathcal{O}(k)))$ is given by
    \begin{equation} \label{main}
      \ind_{T^{n+1} \times S^1} \left( p_k^* [ D_n ] \right) = \left( \sum_{l=0}^n \left( \sum_{i=1}^{n+1-l} (-1)^{i+l+1} 
        \chi_{n,\, ik} \, s_{i+l}(t_1^k,\,\dots, \, t_{n+1}^k)\right) t^l \right)J(\lambda_{n+1}(k))
    \end{equation}
  in $\mathcal{D}'(T^{n+1} \times S^1)$ for $n \geq 1$,\, $k \in \mathbb{Z}$.
\end{thm}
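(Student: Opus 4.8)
The plan is to feed $p_k^*[D_n]$ into Atiyah's formula for the transversal index of a free action, recalled in the introduction, and then to evaluate the resulting equivariant indices on $\mathbb{CP}^n$ by localization. Since $H = S^1$, its irreducible representations are indexed by $m \in \mathbb{Z}$ with characters $\chi_m(t) = t^m$, so Atiyah's formula specializes to
\[
  \ind_{T^{n+1} \times S^1}\!\left( p_k^*[D_n] \right) = \sum_{m \in \mathbb{Z}} \ind_{T^{n+1}}\!\left( D_n \otimes \underline{V_{m^*}} \right) t^m \quad \in \mathcal{D}'(T^{n+1} \times S^1).
\]
The first step is to identify the associated bundles: because $p_k$ is the circle bundle of $\mathcal{O}(k)$, the line bundle $\underline{V_{m^*}} = S(\mathcal{O}(k)) \underset{S^1}{\times} V_{m^*}$ is, up to the sign fixed by the $S^1$-weight convention, $\mathcal{O}(mk)$. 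Thus each summand is the $T^{n+1}$-equivariant index of the Dolbeault--Dirac operator twisted by $\mathcal{O}(mk)$, i.e.\ the equivariant holomorphic Euler characteristic $\sum_q (-1)^q \operatorname{ch}_{T^{n+1}} H^q\bigl(\mathbb{CP}^n,\, \mathcal{O}(mk)\bigr)$.

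I would compute this equivariant index by the Atiyah--Bott fixed point theorem at the $n+1$ fixed points $p_j = [0 : \cdots : 1 : \cdots : 0]$. At $p_j$ the holomorphic cotangent weights are $t_i t_j^{-1}$ $(i \neq j)$ and the fibre of $\mathcal{O}(mk)$ carries weight $t_j^{-mk}$, giving
\[
  \ind_{T^{n+1}}\!\left( D_n \otimes \mathcal{O}(mk) \right) = \sum_{j=1}^{n+1} \frac{t_j^{-mk}}{\prod_{i \neq j}\bigl( 1 - t_i t_j^{-1} \bigr)} .
\]
For $mk \geq 0$ this reproduces the character of $\operatorname{Sym}^{mk}$ of the standard representation (only $H^0$ survives), and for $mk \leq -n-1$ only $H^n$ contributes with the Serre-dual character and the sign $(-1)^n$; the intermediate values vanish. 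The key feature for what follows is that each fixed-point term depends on $m$ only through the monomial $t_j^{-mk}$.

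The decisive step is the summation over $m$. Rather than interchanging the sums directly --- which would produce ill-defined bilateral series --- I would use the index map $J$, which prescribes the one-sided distributional expansions forced by the transversal index: summing the $j$-th fixed-point term over $m \geq 0$ yields the factor $\bigl( 1 - t\, t_j^{-k} \bigr)^{-1}$ expanded from inside the unit circle, while the $m < 0$ part gives the complementary one-sided expansion. Placing the $n+1$ fixed-point contributions over the common denominator $\prod_{j}\bigl( 1 - t\, t_j^{-k}\bigr)$ and clearing it, the numerator is a polynomial in $t$ of degree $n$ whose coefficients expand, through the elementary symmetric polynomials $s_{i+l}(t_1^k, \dots, t_{n+1}^k)$ and the residual characters $\chi_{n, ik}$, into the bracketed expression of \eqref{main}; the reciprocal of the common denominator, read off with the correct one-sided convention, is exactly the distribution $J(\lambda_{n+1}(k))$.

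I expect this last reorganization to be the main obstacle. One must (i) verify that the two one-sided expansions are the ones actually produced by the transversal index, so that the output is the correct element of $\mathcal{D}'(T^{n+1} \times S^1)$ rather than a formal Laurent series; (ii) control the convergence of the doubly-infinite sum in the distribution topology; and (iii) carry out the combinatorial identity that converts the localized sum into elementary symmetric functions, where the bookkeeping of the Serre-dual contributions for $m < 0$ and the resulting signs $(-1)^{i+l+1}$ is the most delicate point.
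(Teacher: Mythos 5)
Your skeleton agrees with the paper's at the first and last stages: the paper also starts from the free action property (Corollary \ref{spfap}) to reduce to $\sum_{m\in\mathbb{Z}}\ind_{T^{n+1}}([D_n]\otimes\underline{t}^{-m})\,t^m$ with $\underline{t}^{-m}\cong\mathcal{O}(-km)$, and also ends by packaging the two one-sided sums into $B\cdot J(\lambda_{n+1}(k))$. Where you diverge is the middle: you evaluate each $\ind_{T^{n+1}}([D_n]\otimes\mathcal{O}(-km))$ by Atiyah--Bott localization, whereas the paper deliberately avoids the Lefschetz formula and computes the cohomology groups $H^q(\mathbb{CP}^n,\mathcal{O}(l))$ as explicit $T^{n+1}$-representations via Kodaira vanishing, Serre duality and explicit bases of sections (Proposition \ref{cohomology}, Corollary \ref{chicharacter}). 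Your route is shorter if one grants Atiyah--Bott, and it has one structural advantage: summing the fixed-point expression over $m\geq 0$ and over $m<0$ automatically produces the \emph{same} rational function expanded at $t=0$ and $t=\infty$ respectively, so the fact that a single numerator multiplies both $J_0$ and $J_\infty$ (the content of the paper's Lemma \ref{A}) comes for free. But note that it would vitiate the paper's Corollary \ref{application}, which uses Theorem \ref{mainthm} to \emph{derive} the Lefschetz formula on $\mathbb{CP}^n$; with your proof that corollary becomes circular. This is precisely why the introduction says the indices ``can be computed by Lefschetz formula, however in this paper, we compute them by using the fundamental tools in Complex Geometry.''

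Two substantive gaps remain in your write-up. First, the step you defer as obstacle (iii) is not a finishing touch but the bulk of the proof: after clearing denominators you must show that the degree-$n$ polynomial
\[
N(t)=\sum_{j=1}^{n+1}\frac{\prod_{i\neq j}\bigl(1-t_i^k t\bigr)}{\prod_{i\neq j}\bigl(1-t_i^{-1}t_j\bigr)}
\]
has Laurent-polynomial coefficients equal to $\sum_{i=1}^{n+1-l}(-1)^{i+l+1}\chi_{n,ik}\,s_{i+l}(t_1^k,\dots,t_{n+1}^k)$. This Lagrange-interpolation-type identity is exactly what the paper spends Lemmas \ref{shift}--\ref{chiandsym}, Lemma \ref{Brecurrence} and Proposition \ref{A=B} establishing (by induction on $n$ and on the residue class $j$ of the twist mod $|k|$); without it you have not produced the right-hand side of \eqref{main}. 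Second, each individual fixed-point contribution, summed over $m\geq 0$, is $\bigl[\,(1-t_j^kt)^{-1}\bigr]^{t=0}$ times a function with poles on the subtorus $\{t_i=t_j\}$, hence is \emph{not} a distribution on $T^{n+1}\times S^1$; only the sum over all $n+1$ fixed points is. So the interchange of the $j$-sum with the $m$-sum must be performed on the dense open set $t_i\neq t_j$ and the resulting identity of Fourier coefficients (which are honest Laurent polynomials on the left by Corollary \ref{chicharacter}) extended by continuity --- this is a real argument, not the routine convergence check your item (ii) suggests. Finally, watch the weight bookkeeping: with the paper's conventions the fibre of $\underline{t}^{-m}=\mathcal{O}(-km)$ at $p_j$ has character $t_j^{km}$ and the Lefschetz denominator at $p_j$ is $\prod_{i\neq j}(1-t_i^{-1}t_j)$; your $\mathcal{O}(mk)$ with weight $t_j^{-mk}$ and denominator $\prod_{i\neq j}(1-t_it_j^{-1})$ would land you on $\lambda_{n+1}(-k)$ rather than $\lambda_{n+1}(k)$.
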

$t_1,\, \dots ,\, t_{n+1}$ and $t$ are the generators of the representation rings $R(T^{n+1})$ and $R(S^1)$ respectively, $\chi_{n,\, ik}$ is the character of the representation of $T^{n+1}$ (see (\ref{defchi}), Corollary \ref{chicharacter}), $s_{i+l}(t_1^k,\, \dots ,\, t_{n+1}^k)$ is the $(i+l)$-th elementary symmetric polynomial in variables $t_1^k,\, \dots ,\, t_{n+1}^k$.
In addition, $\displaystyle \lambda_{n+1}(k) := \prod_{j = 1}^{n+1} (1 - t_j^k t)$, and
$J(\lambda_{n+1}(k))$ is the distribution on $T^{n+1} \times S^1$ defined by
\[
J(\lambda_{n+1}(k)) :=  \prod_{j=1}^{n+1} \left[ \frac{1}{1-t_j^kt} \right]^{t = 0} - \prod_{j=1}^{n+1} \left[ \frac{1}{1-t_j^kt} \right]^{t = \infty} ,
\]
where $\left[ \frac{1}{1-t_j^kt} \right]^{t = 0}$ and $\left[ \frac{1}{1-t_j^kt} \right]^{t = \infty}$ are the Laurent expansions of $\frac{1}{1-t_j^kt}$ at $t = 0$ and $t = \infty$ respectively.
For example, 
  \begin{align*}
    &\left[ \frac{1}{1-t} \right]^{t = 0} = 1 + t + t^2 + t^3 + \cdots \\
    &\left[ \frac{1}{1-t} \right]^{t = \infty} = - t^{-1} - t^{-2} - t^{-3} - \cdots
  \end{align*}
then, we have
\[
  \left[ \frac{1}{1-t} \right]^{t = 0} - \left[ \frac{1}{1-t} \right]^{t = \infty} = \sum_{m \in \mathbb{Z}} t^m .
\]
This is the Fourier expansion of the delta function on $S^1$ supported at $1 \in S^1$. See \ref{formula} for details.

In the last section, we give an application of our main theorem, in which we prove Lefschetz formula for $\mathbb{CP}^n$ with the canonical action of $T^{n+1}$ as below.

\begin{cor} \label{application}
  For any $\eta = [ E^0 ] - [ E^1 ] \in K_{T^{n+1}}(\mathbb{CP}^n)$,  
    \begin{equation} \label{app}
      \ind_{T^{n+1}} \left( [ D_n ] \otimes \eta \right) = 
        - \sum_{t = \xi \in S^1} \res \left( \frac{\alpha ( \eta )}{\lambda_{n+1}(-1)} \frac{1}{t} \; \; ; \; \xi \right) 
          \quad \in R(T^{n+1}) .
    \end{equation}
  where $\alpha$ is the ring isomorphism from $K_{T^{n+1}}(\mathbb{CP}^n)$ to 
    $R(T^{n+1} \times S^1) / ( \lambda_{n+1}(-1) )$ (see Remark \ref{beta} for details).

In paticular, at $(t_1,\, \dots ,\, t_{n+1}) \in T^{n+1}$ such that $t_i \neq t_j (i \neq j)$, 
  \begin{equation} \label{lefforps}
    \ind_{T^{n+1}} \left( [ D_n ] \otimes \eta \right) \left( t_1, \, \cdots ,\, t_{n+1} \right) = 
      \sum_{i = 1}^{n+1} \frac{ \chi ( E^0|_{p_i} ) - \chi ( E^1|_{p_i} )}{\displaystyle 
        \prod_{j \neq i} \left( 1 - t_j^{-1} t_i \right)} \quad \in \mathbb{C} ,
  \end{equation}
where $p_i = [ 0 : \dots : 0 : \overset{i}{ \breve{1} } : 0 :  \dots : 0 ] \in \mathbb{CP}^n$ $( 1 \leq i \leq n+1)$ are the fixed points of the action of $T^{n + 1}$ and 
$\chi \left( E^j |_{p_i} \right)$ is the character of the representation space $E^j|_{p_i}$ of $T^{n + 1}$. 
\end{cor}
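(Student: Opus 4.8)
The plan is to deduce Corollary \ref{application} from the main theorem by running Atiyah's free-action reduction in reverse. The main theorem computes the transversal index of $p_k^*[D_n]$ as a distribution on $T^{n+1}\times S^1$, expressed as a product of a "symmetric polynomial part" with the distribution $J(\lambda_{n+1}(k))$. Atiyah's formula quoted in the introduction, applied to the free $S^1$-action on $S(\mathcal{O}(k))$, expresses this same transversal index as a Fourier series $\sum_{\tau\in\widehat{S^1}}\ind_{T^{n+1}}(D_n\otimes\underline{V_{\tau^*}})\,\chi_\tau$. Since $\widehat{S^1}\cong\mathbb{Z}$ and the bundles $\underline{V_{\tau^*}}=S(\mathcal{O}(k))\times_{S^1}V_{\tau^*}$ are exactly the line bundles $\mathcal{O}(mk)$ for $\tau=t^m$, matching the coefficient of $t^m$ (equivalently $\chi_\tau$) on both sides recovers $\ind_{T^{n+1}}(D_n\otimes\mathcal{O}(mk))$ from \eqref{main}. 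Specializing to $k=-1$ (so that every $\mathcal{O}(m)$ arises) and invoking the ring isomorphism $\alpha$ should turn the twisted index $\ind_{T^{n+1}}([D_n]\otimes\eta)$ into the residue sum \eqref{app}.

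**First I would** make precise the dictionary between the $t$-adic structure of $J(\lambda_{n+1}(k))$ and the Fourier coefficients. The key observation is that $J(\lambda_{n+1}(k))$ is the difference of the expansions of $1/\lambda_{n+1}(k)$ at $t=0$ and $t=\infty$; multiplying a Laurent polynomial in $t$ (the symmetric-polynomial factor) by this difference and extracting the coefficient of a fixed power $t^m$ amounts, by the residue theorem, to a sum of residues of $\alpha(\eta)/(\lambda_{n+1}(-1)\,t)$ over the poles $t=\xi\in S^1$ coming from the factors $1-t_j^{-1}t$. Concretely, for any rational function $f(t)$ whose only poles lie on the unit circle, the difference of its expansions at $0$ and $\infty$ has $t^m$-coefficient equal to $-\sum_{\xi}\res(f(t)t^{-m-1};\xi)$; this is the analytic heart that converts \eqref{main} into \eqref{app}. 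I would therefore devote a lemma to this expansion-versus-residue identity and to identifying the symmetric-polynomial prefactor in \eqref{main} with $\alpha(\eta)$ under the isomorphism of Remark \ref{beta}.

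**Then** the passage to the pointwise formula \eqref{lefforps} is a genuine residue computation at distinct $t_i$. When $t_i\neq t_j$ for $i\neq j$, the denominator $\lambda_{n+1}(-1)=\prod_{j}(1-t_j^{-1}t)$ has simple poles at $t=t_i$, and the residue at $\xi=t_i$ of $\alpha(\eta)/(\lambda_{n+1}(-1)\,t)$ evaluates $\alpha(\eta)$ at $t=t_i$ divided by $\prod_{j\neq i}(1-t_j^{-1}t_i)$, after accounting for the $1/t$ factor. I would check that under $\alpha$ the class $\eta=[E^0]-[E^1]$ evaluated at $t=t_i$ reproduces $\chi(E^0|_{p_i})-\chi(E^1|_{p_i})$, using that the restriction of a $T^{n+1}$-equivariant bundle to the fixed point $p_i$ is its fiber as a $T^{n+1}$-representation and that $\alpha$ is compatible with such restrictions. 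Summing the residues over $i$ and tracking the overall sign (the leading minus in \eqref{app}) then yields \eqref{lefforps}.

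**The main obstacle** I anticipate is pinning down the isomorphism $\alpha$ and verifying that the symmetric-polynomial factor in \eqref{main} is precisely the image $\alpha(p_{-1}^*[D_n]$-class$)$ in $R(T^{n+1}\times S^1)/(\lambda_{n+1}(-1))$, so that the purely formal residue manipulation is legitimate; the localization/restriction compatibility of $\alpha$ at the fixed points $p_i$ is where care is needed, since one must ensure that reducing modulo $(\lambda_{n+1}(-1))$ and then taking residues commutes with evaluation at $t=t_i$. Once that compatibility is established, \eqref{app} is a formal consequence of Theorem \ref{mainthm} and the residue lemma, and \eqref{lefforps} follows by specialization.
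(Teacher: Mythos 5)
Your proposal is correct and follows essentially the same route as the paper: the main theorem at $k=-1$ gives $\ind_{T^{n+1}\times S^1}(p^*[D_n])=J(\lambda_{n+1}(-1))$, your expansion-versus-residue identity is exactly Lemma \ref{residue}, and the fixed-point compatibility of $\alpha$ is Lemma \ref{fiberonfixpt}. The only point you leave vague --- how a general class $\eta$, rather than the line bundles $\beta(t^m)$, enters the residue formula --- is handled in the paper by the module identity $\ind_{T^{n+1}\times S^1}(p^*[D_n]\otimes p^*\eta)=\alpha(\eta)\,\ind_{T^{n+1}\times S^1}(p^*[D_n])$, which rests on $\lambda_{n+1}(-1)$ annihilating $K_{T^{n+1}\times S^1}(T_{S^1}^*S(\mathcal{O}(-1)))$ (Remark \ref{beta}); your coefficient-matching variant needs the same input, via $R(T^{n+1})$-linearity of both sides of (\ref{app}) and the fact that the classes $\beta(t^m)$ generate $K_{T^{n+1}}(\mathbb{CP}^n)$ over $R(T^{n+1})$, so you should state that step explicitly.
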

Note that the assumption $t_i \neq t_j$ for $i \neq j$ is equivalent to the condition that the fixed point set of $( t_1 ,\, \dots ,\, t_{n + 1} ) \in T^{n + 1}$ is a finite subset of $\mathbb{CP}^n$. 
The right hand side in (\ref{lefforps}) makes sense at $(t_1,\, \dots ,\, t_{n+1}) \in T^{n+1}$ such that $t_i \neq t_j (i \neq j)$, however the right hand side in (\ref{app}) makes sense at all points $(t_1,\, \dots ,\, t_{n+1}) \in T^{n+1}$ even if its action on $\mathbb{CP}^n$ has infinitely many fixed points.
The deformations proved in Proposition \ref{B} and Lemma \ref{residue} have important roles in the proof of this application.

Finally, let us briefly review the proof of the main theorem.
As explained above, the transversal index of $p_k^* [ D_n ]$ can be reduced to the indices of the elliptic operators:
\[
  \ind_{T^{n+1}} ( [ D_n ] \otimes \underline{t}^{-m}) \quad (m \in \mathbb{Z}) ,
\]
where $\underline{t}^{-m}$ is a holomorphic line bundle over $\mathbb{CP}^n$ (see \ref{dolop}).
These indices can be computed by Lefschetz formula, however in this paper, we compute them by using the fundamental tools in Complex Geometry: Hodge decomposition, Serre duality and Kodaira vanishing theorem.

\section{Index of transversally elliptic operator}

\subsection{Notations}

Let $G$ be a compact Lie group. We denote by $\hat{G}$ the set of equivalence classes of finite dimensional irreducible (complex) representations of $G$. For $\tau \in \hat{G}$, we denote by $V_{\tau}$ the representation space of $\tau$ and by $\chi_{\tau}$ its character: $\chi_{\tau}(g) = \text{Tr}(g \colon V_{\tau} \to V_{\tau})$. More generally, we denote the character of a representation space $V$ of $G$ by $\chi^G(V)$, or simply $\chi(V)$. Let the dual representation of $\tau$ in $V_{\tau}^*=\homo(V_{\tau}, \, \mathbb{C})$ be denoted by $\tau^*$. We denote by $R(G)$ the representation ring of $G$.

Let $\mathcal{D}(G)$ be the space of $\mathbb{C}$-valued smooth functions on $G$ and $\mathcal{D}'(G)$ be the space of distributions on $G$. $\mathcal{D}(G)$ is naturally a subspace of $\mathcal{D}'(G)$.

\subsection{Indices of transversally elliptic operators}

We recall the notion of transversally elliptic operator. See also \cite{Atiyahtrans} , \cite{Vergne08}.

Let $M$ be a compact smooth manifold with a smooth action of a compact Lie group $G$. 
$\mathfrak{g}$ denotes the Lie algebra of $G$.
For $X \in \mathfrak{g}$, we denote by $VX$ the vector field on $M$ defined by 
\[
  (VX)_x(f) = \left. \frac{d}{dt} f \left( \exp(-tX) \cdot x \right) \right|_{t = 0} \quad \quad (x \in M)
\]
for any smooth function $f$ defined around $x$.
We consider the closed subset $T_G^*M$ of the cotangent bundle $T^*M$, given by the union of the spaces 
$(T_G^*M)_x \subset T_x^*M$ $(x \in M)$ consisting of cotangent vectors orthogonal to the tangent space at $x$ to the orbit $G \cdot x$,\, i.e.,
\[
  T_G^*M = \left\{ \omega \in T^*M \left| \, \omega (VX) = 0 ,\, \text{for all} \; X \in \mathfrak{g} \right. \right\} .
\]
Note that $T_G^*M$ is not a subbundle of $T^*M$ in general since the dimension of the orbit may depend on the base point $x \in M$.

Let $\mathcal{E}^{\pm}$ be two $G$-equivariant complex vector bundles over $M$, that is, $G$ acts on total spaces $\mathcal{E}^{\pm}$ and $M$, and the actions of $G$ on $\mathcal{E}^{\pm}$ cover the action of $G$ on $M$ and are linear on fibers. We denote by $\Gamma (M,\, \mathcal{E}^{\pm})$ the spaces of smooth sections of $\mathcal{E}^{\pm}$.

Let $A \colon \Gamma (M,\, \mathcal{E}^{+}) \to \Gamma (M,\, \mathcal{E}^{-})$ be a $G$-invariant pseudo-differential operator of order $m$. Let $\pi \colon T^*M \to M$ be the natural projection. The principal symbol $\sigma (A)$ is a bundle map ${\pi}^*\mathcal{E}^+ \to {\pi}^*\mathcal{E}^-$ which is homogeneous of degree $m$, defined over $T^*M \setminus M$.
The operator $A$ is said to be transversally elliptic relative to $G$, or simply $G$-transversally elliptic, if its principal symbol $\sigma (A) (x,\, \xi )$ is invertible for all $(x,\, \xi ) \in T_G^*M$ such that $\xi \neq 0$. In paticular, we say that $A$ is ellptic if its principal symbol $\sigma (A) (x,\, \xi )$ is invertible for all $(x,\, \xi ) \in T^*M$ such that $\xi \neq 0$.
When $A$ is elliptic (or transversally elliptic), if we choose $G$-invariant metrics on $M$ and $\mathcal{E}^{\pm}$, the formal adjoint operator $A^*$ is also elliptic (or transversally elliptic).

It is well-known that if $A$ is a $G$-invariant elliptic operator, its kernel $\knl A$ is finite dimensional. In addition, it is a representation space of $G$ since $A$ is $G$-invariant. Thus in this case, the equivariant index of $A$ is defined by 
\[
  \ind_G A := \left[ \knl A \right] - \left[ \knl A^* \right] \quad \in R(G).
\]
Taking the trace of the above index, i.e., its character,  $\ind_G A$ can be regarded as a smooth function on $G$.

When $A$ is a $G$-transversally elliptic operator, in general, $\knl A$ is not necessarily finite dimensional. However, it has a generalized character. Indeed, for any irreducible representation $(\tau ,\, V_{\tau})$ of $G$, the multiplicity $n_{\tau}(A) := \text{dim} \left( \homo_G(V_{\tau},\, \knl A) \right)$ is finite, and the series
\[
  \sum_{\tau \in \hat{G}} n_{\tau}(A) \, \chi_{\tau}
\]
converges weakly in the space of distributions on $G$ (see \cite{Atiyahtrans}, \cite{Shubin}).

\begin{dfn} \label{transindex}
For a transversally elliptic operator $A$ relative to $G$, we define the generalized character of $\knl A$ as a distribution on $G$ by setting
\[
  \mathrm{character}(\knl A) := \sum_{\tau \in \hat{G}} n_{\tau}(A) \, \chi_{\tau} .
\]
And we also define the transversal index of $A$ as follows:
\[
  \ind_G A := \mathrm{character}(\knl A) - \mathrm{character}(\knl A^*) \quad \in \mathcal{D}'(G) .
\]
\end{dfn}

As in \cite{Atiyahtrans} and \cite{A-S1}, when $A$ is $G$-invariant elliptic, its principal symbol $\sigma(A)$ defines a class $[\sigma(A)] \in K_G(T^*M)$, and the index $\ind_GA$ depends only on this class $[\sigma(A)]$. Moreover each element in $K_G(T^*M)$ is represented by a class $[\sigma(A)]$ of some $G$-invariant elliptic pseudo-differential operator $A$. Thus we can define the index map:
\[
  \ind_G \colon K_G(T^*M) \to \mathcal{D}(G)
\]
by setting $\ind_G([\sigma(A)]) := \ind_GA$. 
Similarly, when $A$ is $G$-transversally elliptic, its principal symbol $\sigma(A)$ defines a class $[\sigma(A)|_{T_G^*M}] \in K_G(T_G^*M)$, and the index $\ind_GA$ depends only on this class $[\sigma(A)|_{T_G^*M}]$. In addition, each element in $K_G(T_G^*M)$ is represented by a class $[\sigma(A)|_{T_G^*M}]$ of some $G$-transversally elliptic pseudo-differential operator $A$. Thus we can define the index map:
\[
  \ind_G \colon K_G(T_G^*M) \to \mathcal{D}'(G)
\]
by setting $\ind_G([\sigma(A)|_{T_G^*M}]) := \ind_GA$. 

\begin{rem} \label{module}
$K_G(T_G^*M)$ is regarded as a module over the representation ring $R(G)$ by using the tensor products.
We define an action of $R(G)$ on $\mathcal{D}'(G)$ as multiplication by its character.
Then it turns out that $\ind_G \colon K_G(T_G^*M) \to \mathcal{D}'(G)$ is an $R(G)$-module homomorphism.

\end{rem}

\subsection{Free action property} \label{freeaction}
Transversal index has several important properties. Let us introduce one of them which we will use later. In \cite{Vergne08}, it is called ``Free action property". Since we have explained it roughly in the introduction, we explain that property by using $K$-groups in this section.

Let $G$ and $H$ be two compact Lie groups, and $P$ be a compact manifold on which $G \times H$ acts smoothly. We assume that the action of $H$ is free. Then $M := P/H$ is a compact manifold provided with an action of $G$ and the quotient map $p \colon P \to M$ is a principal $H$-bundle with an opposite action of $H$ (see Remark \ref{action} at the end of this section) and $G$-equivariant.
In addition, the pull-back of $p$ induces the canonical isomorphism $T_{G}^*M \cong \left( T_{G \times H}^*P \right) /H$ as $G$-equivariant vector bundles over $M$. Thus there is an isomorphism
\[
  p^* \colon K_G(T_G^*M) \stackrel{\cong}{\to} K_{G \times H}(T_{G \times H}^*P).
\]
Note that this isomorphism gives a correspondence between $G$-transversally elliptic operators on $M = P/H$ and $G \times H$-transversally elliptic operators on $P$.
Let $\mathcal{E}^{\pm}$ be two $G$-equivariant complex vector bundles over $M$, and $\sigma \colon \pi^* \mathcal{E}^+ \to \pi^*\mathcal{E}^-$ be a $G$-transversally elliptic symbol. For any finite dimensional irreducible representation $(\tau ,\, V_{\tau})$ of $H$, we construct a $G$-equivariant complex vector bundle $\underline{V_{\tau}} := P \underset{H}{\times} V_{\tau}$ $=(P \times V_{\tau}) / (q, \, v) \sim \left(q \cdot h ,\, \tau(h^{-1})(v) \right)$ over $M$. We consider the morphism
\[
  \sigma \otimes \mathrm{Id}_{V_{\tau}} \colon 
    \pi^*(\mathcal{E}^+ \otimes \underline{V_{\tau}}) \to \pi^*(\mathcal{E}^- \otimes \underline{V_{\tau}}) ,
\]
which is $G$-transversally elliptic. We denote by $[\sigma] \otimes V_{\tau}$ the equivalence class $[\sigma \otimes \mathrm{Id}_{V_{\tau}}]  \in K_G(T_G^*M)$.

\begin{thm}[\cite{Atiyahtrans}]
For any $[\sigma] \in K_G(T_G^*M)$, we have
\[
  \ind_{G \times H} (p^*[\sigma]) = \sum_{\tau \in \hat{H}} \ind_G ( [\sigma] \otimes V_{\tau^*} ) \, \chi_{\tau} 
    \quad \in \mathcal{D}'(G \times H).
\] 
\end{thm}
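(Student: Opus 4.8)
The plan is to choose a convenient representative of the symbol class $p^*[\sigma]$, decompose the generalized kernels of the corresponding operator and of its adjoint into $H$-isotypic components, and identify each component with the kernel of a twisted operator on $M$. First I would realize $[\sigma]$ by a $G$-transversally elliptic operator $A \colon \Gamma(M, \mathcal{E}^{+}) \to \Gamma(M, \mathcal{E}^{-})$ and fix a $G$-invariant principal $H$-connection on $p \colon P \to M$ (obtained by averaging an $H$-connection over the compact group $G$). Using the resulting horizontal distribution I would lift $A$ to an operator $\tilde{A} \colon \Gamma(P, p^*\mathcal{E}^{+}) \to \Gamma(P, p^*\mathcal{E}^{-})$ whose principal symbol is the horizontal pullback of $\sigma(A)$. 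Because every covector in $T_{G \times H}^*P$ annihilates the vertical (that is, $H$-orbit) directions, it is horizontal and is identified with a covector in $T_G^*M$; since $\sigma(A)$ is invertible off the zero section of $T_G^*M$, this shows $\tilde{A}$ is $G \times H$-transversally elliptic and that $[\sigma(\tilde{A})|_{T_{G \times H}^*P}] = p^*[\sigma]$, so that $\ind_{G \times H}(p^*[\sigma]) = \ind_{G \times H}\tilde{A}$.

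The heart of the argument is a Peter--Weyl/Frobenius reciprocity identification. Since $H$ acts freely and $\tilde{A}$ is $H$-invariant, $\knl \tilde{A}$ (a priori infinite dimensional) is an $H$-representation whose $\tau$-isotypic component is $\homo_H(V_\tau, \knl \tilde{A}) \otimes V_\tau$. I would prove the canonical $G$-isomorphism
\[
  \homo_H\!\left(V_\tau,\, \Gamma(P, p^*\mathcal{E}^{\pm})\right) \;\cong\; \Gamma\!\left(M,\, \mathcal{E}^{\pm} \otimes \underline{V_{\tau^*}}\right),
\]
sending an $H$-equivariant family $v \mapsto s_v$ to the assignment $q \mapsto (v \mapsto s_v(q))$, which defines an $H$-invariant element of $\mathcal{E}^{\pm} \otimes \homo(V_\tau, \mathbb{C})$ over $P$, hence a section of $\mathcal{E}^{\pm} \otimes \underline{V_{\tau^*}} = \mathcal{E}^{\pm} \otimes (P \underset{H}{\times} V_{\tau^*})$ (this is where $\tau^*$, and not $\tau$, enters). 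The connection was chosen so that under this isomorphism the restriction of $\tilde{A}$ to the $\tau$-component becomes exactly the twisted operator $A \otimes \mathrm{Id}_{\underline{V_{\tau^*}}}$ representing $[\sigma] \otimes V_{\tau^*}$. Hence $\homo_H(V_\tau, \knl \tilde{A}) \cong \knl(A \otimes V_{\tau^*})$ as $G$-modules, and by taking the metrics on $P$ and $p^*\mathcal{E}^{\pm}$ to be pulled back from $G$-invariant data on $M$ together with an invariant fibre metric, the same connection lift makes $\tilde{A}^*$ the lift of $A^*$, so likewise $\homo_H(V_\tau, \knl \tilde{A}^*) \cong \knl(A^* \otimes V_{\tau^*})$.

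Finally I would assemble the distribution. Passing to generalized characters in the sense of Definition~\ref{transindex} and using that the character of $\bigoplus_{\tau} W_\tau \otimes V_\tau$, with $W_\tau$ a $G$-module (of finite $G$-multiplicities) and $V_\tau$ the $H$-module, is $\sum_\tau \chi^G(W_\tau)\,\chi_\tau$ in $\mathcal{D}'(G \times H)$, the two kernels combine to give
\[
  \ind_{G \times H}\tilde{A} = \sum_{\tau \in \hat{H}}\Bigl(\chi^G(\knl(A \otimes V_{\tau^*})) - \chi^G(\knl(A^* \otimes V_{\tau^*}))\Bigr)\chi_\tau = \sum_{\tau \in \hat{H}}\ind_G\!\left([\sigma] \otimes V_{\tau^*}\right)\chi_\tau,
\]
which is the assertion. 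The finiteness of the $G \times H$-multiplicities of $\knl \tilde{A}$ (equivalently, the finiteness of the $G$-multiplicities in each $\knl(A \otimes V_{\tau^*})$) and the weak convergence of this series in $\mathcal{D}'(G \times H)$ are guaranteed by Atiyah's convergence result quoted before Definition~\ref{transindex}.

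I expect the main obstacle to be the matching step in the second paragraph: verifying that, under the Frobenius isomorphism, the lifted operator on the $\tau$-isotypic part is precisely the twisted operator $A \otimes V_{\tau^*}$ carrying the correct dual representation, rather than $V_\tau$ or a connection-dependent deformation of it. This requires unwinding the horizontal-lift construction fibrewise and tracking the (opposite) $H$-action carefully, and the bookkeeping of $\tau$ versus $\tau^*$ is exactly where the conventions for the principal action enter. The remaining analytic point, namely the finiteness of the isotypic multiplicities and the weak convergence, is genuine but may be invoked from the cited references rather than reproved here.
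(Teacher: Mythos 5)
The paper does not prove this statement at all: it is quoted directly from Atiyah's lecture notes (Corollary \ref{spfap} and the theorem preceding it are both cited to \cite{Atiyahtrans} with no argument supplied). Your outline is, in substance, a correct reconstruction of Atiyah's original free-action argument: lift the symbol horizontally, observe that every covector in $T_{G\times H}^*P$ is horizontal and hence identified with one in $T_G^*M$, decompose $\Gamma(P,\,p^*\mathcal{E}^{\pm})$ into $H$-isotypic pieces, and use the Frobenius isomorphism $\homo_H(V_\tau,\,\Gamma(P,\,p^*\mathcal{E}^{\pm}))\cong\Gamma(M,\,\mathcal{E}^{\pm}\otimes\underline{V_{\tau^*}})$ to match each piece with a twisted operator downstairs; the appearance of $\tau^*$ rather than $\tau$ is forced exactly as you describe. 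The one place where your wording overstates what is available is the claim that the restriction of $\tilde{A}$ to the $\tau$-isotypic component is \emph{exactly} $A\otimes\mathrm{Id}_{\underline{V_{\tau^*}}}$, so that $\homo_H(V_\tau,\,\knl\tilde{A})\cong\knl(A\otimes V_{\tau^*})$ as $G$-modules: for a connection-dependent lift the descended operator $A_\tau$ agrees with $A\otimes\mathrm{Id}_{\underline{V_{\tau^*}}}$ only up to lower-order (curvature) terms, so the kernels need not coincide. This is harmless for the statement being proved, because $A_\tau$ still has principal symbol $\sigma(A)\otimes\mathrm{Id}$ and the transversal index depends only on the class $[\sigma]\otimes V_{\tau^*}$ in $K_G(T_G^*M)$; you should simply route the argument through $\ind_G A_\tau=\ind_G([\sigma]\otimes V_{\tau^*})$ rather than through an identification of kernels. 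With that adjustment, and with the convergence of the isotypic series invoked from the cited references as you propose, the proof is complete.
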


As a special case, we consider the isomorphism
\[
  p^* \colon K_G(T^*M) \stackrel{\cong}{\to} K_{G \times H}(T_H^*P).
\]
This implies that operators on $P$ which are $G \times H$-invariant $H$-transversally elliptic can be reduced to $G$-invariant elliptic operators on $M$.

\begin{cor}[\cite{Atiyahtrans}] \label{spfap}
For any $[\sigma] \in K_G(T^*M)$, we have
\[
  \ind_{G \times H} (p^*[\sigma]) = \sum_{\tau \in \hat{H}} \ind_G ([\sigma] \otimes V_{\tau^*}) \, \chi_{\tau} 
    \quad \in \mathcal{D}'(G \times H).
\]
\end{cor}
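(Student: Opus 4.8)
The plan is to obtain this corollary not by re-running the proof of the theorem, but by recognizing it as the \emph{elliptic} specialization of that theorem, linked to the general transversally elliptic statement through the restriction maps induced by the inclusions $T_G^*M \hookrightarrow T^*M$ and $T_{G \times H}^*P \hookrightarrow T_H^*P$.

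First I would record the elementary remark that a $G$-invariant elliptic symbol on $M$ is \emph{a fortiori} $G$-transversally elliptic: invertibility of $\sigma$ over all of $T^*M \setminus M$ forces invertibility over the smaller set $T_G^*M \setminus M$. Hence the inclusion $T_G^*M \hookrightarrow T^*M$ induces a restriction homomorphism $r_M \colon K_G(T^*M) \to K_G(T_G^*M)$, and in the same way $T_{G \times H}^*P \hookrightarrow T_H^*P$ induces $r_P \colon K_{G \times H}(T_H^*P) \to K_{G \times H}(T_{G \times H}^*P)$. The structural point I would then establish is that the two isomorphisms $p^*$ of Section \ref{freeaction} intertwine these restrictions: because $p^*$ identifies $T^*M$ with $T_H^*P/H$ and, under this identification, carries the subset $T_G^*M$ exactly onto $T_{G \times H}^*P / H$, the square
\[
\begin{CD}
K_G(T^*M) @>{p^*}>> K_{G \times H}(T_H^*P) \\
@V{r_M}VV @VV{r_P}V \\
K_G(T_G^*M) @>{p^*}>> K_{G \times H}(T_{G \times H}^*P)
\end{CD}
\]
commutes, the horizontal arrows being the isomorphisms already at our disposal.

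With the square in hand, given $[\sigma] \in K_G(T^*M)$ I would apply the theorem to the restricted class $r_M[\sigma] \in K_G(T_G^*M)$, which gives
\[
  \ind_{G \times H}\bigl( p^*( r_M[\sigma] ) \bigr) = \sum_{\tau \in \hat{H}} \ind_G\bigl( ( r_M[\sigma] ) \otimes V_{\tau^*} \bigr)\, \chi_{\tau} .
\]
Commutativity turns the left-hand side into $\ind_{G \times H}\bigl( r_P( p^*[\sigma] ) \bigr)$, which is precisely how the index map on $K_{G \times H}(T_H^*P)$ is understood in the corollary, namely the $G \times H$-transversal index of the lifted $H$-transversally elliptic symbol $p^*[\sigma]$. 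On the right-hand side I would use that the tensor operation $\otimes V_{\tau^*}$ is the $R(G)$-module action and that $r_M$ is $R(G)$-linear, so $(r_M[\sigma]) \otimes V_{\tau^*} = r_M([\sigma] \otimes V_{\tau^*})$; since $[\sigma] \otimes V_{\tau^*}$ is again elliptic, its $G$-transversal index coincides with its genuine elliptic index $\ind_G([\sigma] \otimes V_{\tau^*}) \in R(G) \subset \mathcal{D}'(G)$. Substituting both identities reproduces the stated formula of the corollary.

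The hard part will be the two compatibilities invoked above. The commutativity of the square reduces to a geometric check that $p^*$ respects the inclusions of the transversal cotangent sets, which follows from the description in Section \ref{freeaction} but deserves to be spelled out. More delicate is the assertion that the transversal index of the restriction of an elliptic class equals its elliptic index: this is the functoriality of Atiyah's index maps along $K_G(T^*M) \to K_G(T_G^*M)$, and I would justify it by representing $r_M[\sigma]$ by the same elliptic operator $A$, whose kernel and cokernel are finite-dimensional $G$-modules, so that reading off $[\knl A] - [\knl A^*] \in R(G)$ or its character as a distribution gives the same element under $R(G) \hookrightarrow \mathcal{D}'(G)$.
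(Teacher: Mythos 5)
Your proposal is correct and matches the paper's (implicit) argument: the paper cites this corollary from Atiyah and presents it exactly as the specialization of the preceding theorem to elliptic classes via the isomorphism $p^* \colon K_G(T^*M) \xrightarrow{\cong} K_{G \times H}(T_H^*P)$, which is precisely what your restriction-square argument spells out. One small imprecision: tensoring with $\underline{V_{\tau^*}} = P \underset{H}{\times} V_{\tau^*}$ is the $K_G(M)$-module action (the bundle is in general nontrivial), not the $R(G)$-action, but restriction still commutes with it, so your conclusion stands.
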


Our explicit formula (Theorem \ref{main}) will be proved by using this corollary in the next section.

\begin{rem} \label{spform}
As Atiyah explained in \cite{Atiyahtrans}, index of any element in $K_{G \times H}(T_{H}^*M)$ can be regarded as a linear map as follows:
If $A$ is a transversally elliptic operator relative to $H$ and $G$-invariant, then it is of course transversally elliptic relative to $G \times H$ so $\ind_{G \times H} A \in \mathcal{D}'(G \times H)$.
In this case, however it is a distribution of special form as below.
If we write
\[
  \ind_{G \times H} A = \sum_{\alpha ,\, \beta} C_{\alpha ,\, \beta} \chi_{\alpha} \chi_{\beta} ,
\]
where $C_{\alpha ,\, \beta} \in \mathbb{Z}$, $\alpha \in \hat{G}$, $\beta \in \hat{H}$ (Note that by definition, $\ind_{G \times H} A$ can be written in this form), then the partial sums for a fixed $\beta$ are finite, that is, $C_{\alpha ,\, \beta} = 0$ for all but finitely many $\alpha$ because $A$ is originally transversally elliptic relative to $H$.  This means that $\ind_{G \times H} A$ can be viewed as a continuous linear map $\mathcal{D}(H) \to \mathcal{D}(G)$ given by 
\[
  \varphi \mapsto \sum C_{\alpha ,\, \beta} \chi_{\alpha} \left( \int_{H} \varphi \chi_{\beta} \right) .
\]
If we denote this map by $\ind_{G,\, H} A \colon \mathcal{D}(H) \to \mathcal{D}(G)$, then 
\[
  \left( \ind_{G \times H} A \right) (\psi \varphi) = \int_{G} \psi \,  \left( \ind_{G,\, H} A \right) (\varphi) 
    \quad \in \mathbb{C} ,
\]
for $\psi \in \mathcal{D}(G)$ and $\varphi \in \mathcal{D}(H)$.
Note that $\ind_{G,\, H} A$ is determined by its restriction to $R(H)$ and that $\overline{\chi_{\beta}}$ are mapped to $\sum C_{\alpha ,\, \beta} \chi_{\alpha}$ by $\ind_{G,\, H} A$.

Our main theorem can be also viewed as an formula for linear maps $\mathcal{D}(S^1) \to \mathcal{D}(T^{n+1})$.
\end{rem}

\begin{rem} \label{action}
We always assume that actions of Lie groups on manifolds are left actions, and actions on principal bundles are right actions. Under these conventions, when a manifold $P$ has a free (left) action of a compact Lie group $G$, $\pi \colon P \to P/G$ is a principal $G$-bundle with the following right action of $G$:
\[
  p \cdot g := g^{-1} \cdot p \quad (p \in P, \, g \in G).
\]
In other words, taking a local trivialization of P:
\[
  \phi \colon \pi^{-1}(U) \stackrel{\cong}{\to} U \times G ,
\]
we have $\phi(g^{-1} \cdot p) = \phi(p \cdot g) = (x,\, ag)$ ,where $\phi(p) = (x,\, a)$.

Conversely, when a principal $G$-bundle $P \to M$ is given, and we regard the total space P as a $G$-manifold, the left action of $G$ is given by
\[
  g \cdot p := p \cdot g^{-1} \quad (p \in P ,\, g \in G).
\]
\end{rem}

\section{The index formula}

\subsection{Settings} \label{setting}
In this paper, to simplify the notations, we may identify an element of the representation ring $R(G)$ of a compact Lie group $G$ with its virtual character via the following map: 
\[
  R(G) \ni [V^0] - [V^1] \mapsto \chi(V^0) - \chi(V^1) \in \mathcal{D}(G).
\]
Let $R(S^1)$ and $R(T^{n+1})$ be the representation rings of $S^1$ and $T^{n+1}$ respectively. 
Then the following lemma is well-known.
\begin{lem}
There are ring isomorphisms
\[
  R(S^1) \cong \mathbb{Z} [t,\, t^{-1}] \quad , \quad 
    R(T^{n+1}) \cong \mathbb{Z} \left[ t_1 ,\, t_1^{-1} ,\, \dots ,\, t_{n+1} ,\, t_{n+1}^{-1} \right] .
\]
\begin{enumerate}
  \item $t$ corresponds to the character of the irreducible unitary representation 
    $(\rho_1,\, \mathbb{C})$ of $S^1$ defined by
      $
        \rho_1(u) := u \in U(1) \quad (u \in S^1).
      $

  \item $t_j$ corresponds to the character of the irreducible unitary representation 
    $(\rho_j,\, \mathbb{C})$ of $T^{n+1}$ defined by
      $
        \rho_j(u) := u_j \in U(1) \quad (u = (u_1, \, \dots ,\, u_{n+1}) \in T^{n+1}).
      $
\end{enumerate}
\end{lem}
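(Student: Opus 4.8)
The plan is to reduce everything to the structure of the Pontryagin dual of a torus together with the standard description of the representation ring of an abelian compact group. First I would record two general facts. Since $S^1$ and $T^{n+1}$ are abelian, Schur's lemma forces every finite-dimensional irreducible complex representation to be one-dimensional; consequently $\hat{G}$ coincides with the group of continuous characters $G \to U(1)$, and $R(G)$ is the free abelian group on $\hat{G}$ with multiplication induced by tensor product, $\chi_\sigma \cdot \chi_\tau = \chi_{\sigma \otimes \tau}$. Because tensoring one-dimensional representations adds their characters pointwise, $R(G)$ is precisely the integral group ring $\mathbb{Z}[\hat{G}]$.

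Next I would identify $\hat{G}$ explicitly. For $S^1 = U(1)$ the continuous homomorphisms $U(1) \to U(1)$ are exactly $u \mapsto u^m$ for $m \in \mathbb{Z}$ (a standard fact, proved for instance by lifting to the universal cover $\mathbb{R}$), so $\hat{S^1} \cong \mathbb{Z}$ with $\rho_1$ corresponding to the generator $1$. Hence $R(S^1) \cong \mathbb{Z}[\mathbb{Z}]$, and writing $t$ for the class of $\rho_1$ identifies this with the Laurent polynomial ring $\mathbb{Z}[t,\, t^{-1}]$: the group $\mathbb{Z}$ is generated by $\pm 1$, and $t^{-1}$ is the class of the dual (conjugate) representation $\rho_1^*$. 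This proves item (1).

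For the torus I would invoke that the irreducible representations of a product of compact groups are the external tensor products of irreducibles of the factors, which yields the canonical ring isomorphism $R(G \times H) \cong R(G) \otimes_{\mathbb{Z}} R(H)$. Applying this inductively to $T^{n+1} = S^1 \times \cdots \times S^1$ gives $R(T^{n+1}) \cong \mathbb{Z}[t_1,\, t_1^{-1}] \otimes_{\mathbb{Z}} \cdots \otimes_{\mathbb{Z}} \mathbb{Z}[t_{n+1},\, t_{n+1}^{-1}] \cong \mathbb{Z}[t_1^{\pm 1},\, \ldots,\, t_{n+1}^{\pm 1}]$, with $t_j$ the class of $\rho_j$, establishing item (2). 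Equivalently, one can argue directly that $\hat{T^{n+1}} \cong \mathbb{Z}^{n+1}$ via $(m_1,\, \ldots,\, m_{n+1}) \mapsto (u \mapsto u_1^{m_1} \cdots u_{n+1}^{m_{n+1}})$ and apply $R(T^{n+1}) \cong \mathbb{Z}[\mathbb{Z}^{n+1}]$.

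The statement is genuinely elementary, so there is no serious obstacle; the only point requiring a little care is the determination of $\hat{S^1}$, namely that every continuous character of the circle is a power map, since that is the fact on which the entire identification rests.
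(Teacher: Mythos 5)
Your argument is correct and is the standard one: the paper states this lemma as well-known and gives no proof, so there is nothing to compare it against, and your reduction via Schur's lemma, the identification $R(G)\cong\mathbb{Z}[\hat{G}]$ for compact abelian $G$, the computation $\hat{S^1}\cong\mathbb{Z}$, and the K\"unneth-type isomorphism $R(G\times H)\cong R(G)\otimes_{\mathbb{Z}}R(H)$ supplies exactly the argument the paper omits. One trivial wording slip: tensoring one-dimensional representations \emph{multiplies} their characters pointwise (equivalently, \emph{adds} the corresponding elements of $\hat{G}$ written additively); your conclusion $\chi_{\sigma}\cdot\chi_{\tau}=\chi_{\sigma\otimes\tau}$ and the identification with the group ring are nonetheless correct.
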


%

%


Next, recall the tautological line bundle over $\mathbb{CP}^n$:
\[
  \mathcal{O}(-1) := \left\{ (\bm{l} \, ,\, v ) \in \mathbb{CP}^n \times \mathbb{C}^{n+1} \left| \, v \in \bm{l} \right. \right\}
\]
as a subbundle of $\mathbb{CP}^n \times \mathbb{C}^{n+1}$,
where we regard $\bm{l} \in \mathbb{CP}^n$ as a $1$-dimensional linear subspace of $\mathbb{C}^{n+1}$.
Moreover, recall that $\mathcal{O}(1)$ is the dual $\mathcal{O}(-1)^*$.
For $k>0$, let $\mathcal{O}(k)$ be the line bundle $\mathcal{O}(1) \otimes \dots \otimes \mathcal{O}(1)$\, ($k$-times).
Analogously, for $k<0$, we define $\mathcal{O}(k) = \mathcal{O}(-k)^* := \mathcal{O}(-1) \otimes \dots \otimes \mathcal{O}(-1)$\, ($|k|$-times), and for $k = 0$, 
we define $\mathcal{O}(0)$ as the trivial line bundle $\mathbb{CP}^n \times \mathbb{C}$.

\begin{rem} \label{hermitian}
Since $\mathcal{O}(-1)$ has the canonical Hermitian metric induced by the one of $\mathbb{CP}^n \times \mathbb{C}^{n+1}$, also its dual $\mathcal{O}(1)$ has the canonical Hermitian metric.
Hence, all of $\mathcal{O}(k)$ have the canonical Hermitian metrics.
\end{rem}

Let $S\left( \mathcal{O}(k) \right)$ denote the circle bundle of $\mathcal{O}(k)$, that is,
\[
  S\left( \mathcal{O}(k) \right) := \left\{ v \in \mathcal{O}(k) \left| \, \Vert v \Vert = 1 \right. \, \right\},
\]
where $\Vert \cdot \Vert$ denotes the norm induced by the Hermitian metric.
Then, for each $k \in \mathbb{Z}$, we have $S^1$-principal bundle:
\[
  p_k \colon S\left( \mathcal{O}(k) \right) \to \mathbb{CP}^n
\]
equipped with the right action of $S^1$ on each fiber by scalar multiplication.
Note that when $S(\mathcal{O}(k))$ is regarded as a $S^1$-manifold, the (left) action of $S^1$ is given by multiplying by inverse: $S^1 \ni a \colon v \mapsto a^{-1}v \, $ (see Remark \ref{action}).

The canonical action of $T^{n+1}$ on $\mathbb{C}^{n+1} \setminus \{ 0 \}$ induces an action of $T^{n+1}$ on $\mathbb{CP}^n$. The image of the point $\bm{l} = [z_1 : \dots : z_{n+1}] \in \mathbb{CP}^n$ under this action of $u = (u_1 ,\, \dots ,\, u_{n+1}) \in T^{n+1}$ is denoted by $u \cdot \bm{l} = [ u_1z_1 : \dots : u_{n+1}z_{n+1} ]$.
Moreover, $T^{n+1}$ acts on the principal bundles $p_k \colon S\left( \mathcal{O}(k) \right) \to \mathbb{CP}^n$. Indeed, for $k < 0$, $u =(u_1,\, \dots ,\, u_{n+1}) \in T^{n+1}$ acts on $\mathcal{O}(k)$ as follows:
\[
  \begin{array}{rcl}
  \mathcal{O}(k)|_{\bm{l}} = \bm{l}^{\otimes |k|} &\to 
    &\mathcal{O}(k)|_{u \cdot \bm{l}} = \left( u \cdot \bm{l} \right)^{\otimes |k|} \\ [5pt]
  v^{(1)} \otimes \dots \otimes v^{(-k)} &\mapsto &(u \cdot v^{(1)}) \otimes \dots \otimes (u \cdot v^{(-k)}) ,
  \end{array}
\]
where $v^{(i)} = (v_1^{(i)},\, \dots ,\, v_{n+1}^{(i)}) \in \bm{l} \subset \mathbb{C}^{n+1}$, and $u \cdot v^{(i)} := (u_1v_1^{(i)},\, \dots ,\, u_{n+1}v_{n+1}^{(i)})$.
Since these actions are isometric (with respect to the Hermitian metrics which are explained in Remark \ref{hermitian}), we can restrict them to $S(\mathcal{O}(k))$.
When $k > 0$, we consider the dual action of the above one. Finally for $k = 0$, the action of $T^{n+1}$ on $\mathbb{CP}^n$ induces the one on $\mathcal{O}(0) = \mathbb{CP}^n \times \mathbb{C}$ which are trivial on each fiber.
Thus $p_k \colon S\left( \mathcal{O}(k) \right) \to \mathbb{CP}^n$ are $T^{n+1}$-equivariant principal $S^1$-bundles.

\subsection{Dolbeault operator} \label{dolop}

We consider the Dolbeault complex.
Let $\displaystyle \mathcal{E}^+ := \bigoplus_{q : even} \bigwedge\nolimits^q \overline{T_{\mathbb{C}}^*\mathbb{CP}^n}, \quad \mathcal{E}^- := \bigoplus_{q : odd} \bigwedge\nolimits^q \overline{T_{\mathbb{C}}^*\mathbb{CP}^n}$, where $T_{\mathbb{C}}^*\mathbb{CP}^n$ is the holomorphic cotangent bundle of $\mathbb{CP}^n$. 
Since $T^{n+1}$ acts on $\mathbb{CP}^n$ as explained above, $\mathcal{E}^{\pm}$ are $T^{n+1}$-equivariant complex vector bundles over $\mathbb{CP}^n$. Let us define the Dirac operator $D_n$ on $\mathbb{CP}^n$ associated with the Dolbeault operator $\bar{\, \partial}$ on $\mathbb{CP}^n$ by
\[
  D_n := \bar{\, \partial} + \bar{\, \partial}^* \colon \Gamma(\mathbb{CP}^n,\, \mathcal{E}^+)
    \to \Gamma(\mathbb{CP}^n,\, \mathcal{E}^-) .
\]
In this paper, we often call $D_n$ simply the Dolbeault operator.
Since $\bar{\, \partial}$ on $\mathbb{CP}^n$ is elliptic and $T^{n+1}$-invariant, so is $D_n$. 
Then from Corollary \ref{spfap}, the transversal index of $p_k^*[D_n] \in K_{T^{n+1} \times S^1}(T_{S^1}^* \mathbb{CP}^n)$ is given by
\begin{equation} \label{key1}
  \ind_{T^{n+1} \times S^1} \left( p_k^*[D_n] \right) = 
    \sum_{m \in \mathbb{Z}} \ind_{T^{n+1}} \left( [ D_n ] \otimes \underline{t}^{-m} \right) \, t^m .
\end{equation}
Here $\underline{t}^{-m}$ is a complex line bundle over $\mathbb{CP}^n$ defined by $S(\mathcal{O}(k)) \underset{S^1}{\times} t^{-m} = \left( S(\mathcal{O}(k)) \times t^{-m} \right) / \sim$, where the equivalence relation is given by 
\[
  \left( v ,\, \xi \right) \sim \left( av ,\, \left( a^{-1} \right)^{-m} \xi \right)  \quad \text{for some} \; a \in S^1 .
\] 
In addition, this line bundle has the action of $T^{n+1}$ induced by the action on $S(\mathcal{O}(k))$.

\begin{lem}  \label{holline}
There is a $T^{n+1}$-equivariant isomorphism of complex line bundles over $\mathbb{CP}^n$:
\[
  \underline{t}^{-m} = S(\mathcal{O}(k)) \underset{S^1}{\times} t^{-m} \cong \mathcal{O}(-km) \qquad (m \in \mathbb{Z})
\]
In particular, $\underline{t}^{-m}$ admits a structure of holomorphic line bundle.
\end{lem}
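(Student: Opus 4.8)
The plan is to recognize $\underline{t}^{-m}$ as the standard associated line bundle of a Hermitian circle bundle and to identify it with a tensor power of $\mathcal{O}(k)$ by an explicit, manifestly $T^{n+1}$-equivariant bundle map. Concretely, I would first record that, under the dictionary of the previous lemma, the representation $t^{-m}$ of $S^1$ is the one-dimensional representation $a \mapsto a^{-m}$ (weight $-m$), and that the principal right $S^1$-action on $S(\mathcal{O}(k))$ is scalar multiplication $v \cdot a = av$. The target of the desired isomorphism is $\mathcal{O}(k)^{\otimes(-m)}$, which I will show is canonically $\underline{t}^{-m}$ and then match with $\mathcal{O}(-km)$.

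For the core isomorphism I would define, for a unit vector $v \in S(\mathcal{O}(k))$ and an integer $N$, the element $v^{\otimes N} \in \mathcal{O}(k)^{\otimes N}$ by the usual tensor power when $N \geq 0$ and by $(v^*)^{\otimes |N|}$ when $N < 0$, where $v^* = \langle\, \cdot\, ,\, v \rangle$ is the metric dual; since $\Vert v \Vert = 1$ this is the natural inverse and satisfies $(av)^{\otimes N} = a^N v^{\otimes N}$ for $a \in S^1$ (using $\bar{a} = a^{-1}$). Setting $N = -m$, I would define
\[
  \Phi \colon S(\mathcal{O}(k)) \times t^{-m} \to \mathcal{O}(k)^{\otimes(-m)}, \qquad (v,\, \xi) \mapsto \xi\, v^{\otimes(-m)} .
\]
The key routine check is well-definedness on equivalence classes: for the relation $(v,\, \xi) \sim (av,\, (a^{-1})^{-m}\xi) = (av,\, a^{m}\xi)$ one computes $\Phi(av,\, a^{m}\xi) = a^{m}\xi\, (av)^{\otimes(-m)} = a^{m}\xi\, a^{-m} v^{\otimes(-m)} = \xi\, v^{\otimes(-m)} = \Phi(v,\, \xi)$, so $\Phi$ descends to the quotient. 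It is visibly linear and bijective on each fiber, since $\Vert v \Vert = 1$ forces $v^{\otimes(-m)} \neq 0$, hence a bundle isomorphism.

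Finally I would verify $T^{n+1}$-equivariance and identify the target. Because the $T^{n+1}$- and $S^1$-actions on $S(\mathcal{O}(k))$ commute, $T^{n+1}$ descends to $\underline{t}^{-m}$ acting on the first factor; and since the $T^{n+1}$-action on $\mathcal{O}(k)$ is isometric it commutes with $v \mapsto v^*$ and with tensor powers (exactly as the action on $\mathcal{O}(k)^{\otimes N}$ was defined in Section \ref{setting}), so $\Phi$ intertwines the two actions. It then remains to identify $\mathcal{O}(k)^{\otimes(-m)} \cong \mathcal{O}(-km)$ equivariantly, which is immediate from $\mathcal{O}(k) = \mathcal{O}(-1)^{\otimes(-k)}$ together with $\mathcal{O}(a) \otimes \mathcal{O}(b) \cong \mathcal{O}(a+b)$, and to note that $\mathcal{O}(-km)$, being a tensor power of the holomorphic line bundles $\mathcal{O}(\pm 1)$, is holomorphic; transporting this structure through $\Phi$ gives the ``in particular'' claim. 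I expect the main obstacle to be purely bookkeeping: pinning down the weight of $t^{-m}$ and the direction of the $S^1$-action (which enters through inverses, per Remark \ref{action}), and treating the negative tensor powers via the Hermitian metric so that the computation reproduces precisely the given equivalence relation $(v,\, \xi) \sim (av,\, a^{m}\xi)$.
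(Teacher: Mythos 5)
Your proposal is correct and follows essentially the same route as the paper, which reduces the lemma to the general fact that for a Hermitian line bundle $L$ with isometric $G$-action one has $G$-equivariant isomorphisms $S(L) \cong P_L$ and $P_L \underset{S^1}{\times} t^l \cong L^{\otimes l}$, then takes $L = \mathcal{O}(k)$, $l = -m$. Your explicit map $(v,\xi) \mapsto \xi\, v^{\otimes(-m)}$, with negative powers handled via the metric dual, simply makes concrete the step the paper dismisses as obvious up to equivariance, and your bookkeeping of the equivalence relation $(v,\xi)\sim(av,a^m\xi)$ and of the weight conventions is accurate.
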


\begin{proof}
This lemma follows from the more general statement below.

\end{proof}

Let $L$ be a complex line bundle over a complex manifold $M$ with a Hermitian metric, on which a compact Lie group $G$
acts isometrically.
We denote by $P_L$ the principal $S^1$-bundle associated with $L$, that is,
\[
  P_L := \bigcup_{x \in M} P_{L,\, x} 
   ,
\]
where 
$P_{L,\, x}$ is the space consisting of unitary isomorphisms $\phi$ from $\mathbb{C}$ to the fiber $L|_x$ of $L$ on $x \in M$.
The right action of $S^1$ on $P_L$ is given by composing from right:
\[
  u \colon P_{L,\, x} \ni \phi \mapsto \phi \circ u \in P_{L,\, x} \quad (u \in S^1) ,
\]
where each $u \in S^1$ is regarded as a linear map $\mathbb{C} \to \mathbb{C}$ by scalar multiplication.
In addition, let $S(L)$ be the circle bundle of $L$ which is a principal $S^1$-bundle with respect to the right action of $S^1$ given by scalar multiplication.
Since $G$ acts on $L$ isometrically, both $P_L$ and $S(L)$ are $G$-equivariant principal $S^1$-bundles. Note that the actions of $G$ on them are left actions.
Then, the following lemma holds.
\begin{lem}
\begin{enumerate}
  \item There is a $G$-equivariant isomorphism of principal $S^1$-bundles:
    \[
      S(L) \cong P_L .
    \]
  \item For any $l \in \mathbb{Z}$, there exists a $G$-equivariant isomorphism of complex vector bundles:
    \[
      P_L \underset{S^1}{\times} t^l \cong L^{\otimes l} .
    \]
\end{enumerate}
\end{lem}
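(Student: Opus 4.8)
\emph{The plan.} Both assertions are proved by writing down the evident fibrewise map and checking that it intertwines the two right $S^1$-actions and the left $G$-action; there is no analytic content, only careful bookkeeping of conventions. For (1), the idea is to send a unit vector to the unitary frame it determines. Concretely, for $v \in S(L)_x$ (so $v \in L|_x$ with $\Vert v \Vert = 1$) I would define $\phi_v \colon \mathbb{C} \to L|_x$ by $\phi_v(\lambda) := \lambda v$. Since multiplication by a unit vector is a unitary isomorphism, $\phi_v \in P_{L,\, x}$, and the assignment $\Psi \colon v \mapsto \phi_v$ has the obvious inverse $\phi \mapsto \phi(1)$ (note $\phi(1)$ is a unit vector because $\phi$ is unitary). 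Both maps are smooth and cover $\mathrm{id}_M$, so $\Psi$ is an isomorphism of fibre bundles over $M$; it then remains only to match the structure group and the $G$-symmetry.

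\emph{Equivariance for (1).} First I would check equivariance of the right $S^1$-actions: the action on $S(L)$ is $v \cdot u = uv$, while on $P_L$ it is $\phi \cdot u = \phi \circ u$ (with $u$ viewed as scalar multiplication on $\mathbb{C}$). Then $\phi_{v \cdot u}(\lambda) = \lambda(uv) = (\phi_v \circ u)(\lambda)$, i.e. $\Psi(v \cdot u) = \Psi(v) \cdot u$. For the left $G$-action, write $g_* \colon L|_x \to L|_{gx}$ for the complex-linear, isometric map induced by $g \in G$; the induced actions are $g \cdot v = g_* v$ on $S(L)$ and $g \cdot \phi = g_* \circ \phi$ on $P_L$. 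Complex-linearity of $g_*$ gives $\phi_{g_* v}(\lambda) = \lambda(g_* v) = g_*(\lambda v) = (g_* \circ \phi_v)(\lambda)$, so $\Psi(g \cdot v) = g \cdot \Psi(v)$. This finishes (1).

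\emph{Construction for (2).} I would define a bundle map directly out of $P_L \underset{S^1}{\times} t^l = (P_L \times \mathbb{C})/\!\sim$, where, by the associated-bundle convention of Section \ref{freeaction}, the relation is $(\phi,\, \xi) \sim (\phi \circ u,\, u^{-l}\xi)$. For $l \geq 0$ set $F(\phi,\, \xi) := \xi \cdot \phi(1)^{\otimes l} \in (L|_x)^{\otimes l} = L^{\otimes l}|_x$; for $l < 0$ replace $\phi(1)^{\otimes l}$ by $(\phi(1)^*)^{\otimes |l|}$, where $\phi(1)^* \in (L|_x)^*$ is the dual covector determined algebraically by $\phi(1)^*(\phi(1)) = 1$ (no conjugation). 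Well-definedness reduces to the identity $(\phi \circ u)(1) = u\,\phi(1)$, whence $\phi(1)^{\otimes l} \mapsto u^l \phi(1)^{\otimes l}$ in the first case, and $(\phi(1)^*)^{\otimes |l|} \mapsto u^l (\phi(1)^*)^{\otimes |l|}$ in the second (because $(uw)^* = u^{-1} w^*$ for a nonzero vector $w$); either way this exactly cancels the $u^{-l}$ twist. Each $F$ is a linear isomorphism on fibres, hence a bundle isomorphism, and $G$-equivariance follows from $\phi(1) \mapsto g_* \phi(1)$ together with the fact that the $G$-action on $L^{\otimes l}$ is $g_*^{\otimes l}$ (respectively its dual) and that scalars commute with $g_*$.

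\emph{Main obstacle and payoff.} There is no serious difficulty here; the only thing to watch is the consistent tracking of three conventions — $u \in S^1$ acting as scalar multiplication, right actions composed on the right, and the $\tau(h^{-1})$ twist in the quotient relation — and, relatedly, the negative-$l$ case, where passing to duals sends the scalar $u$ to $u^{-1}$ and one must confirm the signs still cancel. Combining (1) with (2) applied to $L = \mathcal{O}(k)$, $l = -m$, and $G = T^{n+1}$ yields $\underline{t}^{-m} \cong P_{\mathcal{O}(k)} \underset{S^1}{\times} t^{-m} \cong \mathcal{O}(k)^{\otimes(-m)} = \mathcal{O}(-km)$, which is precisely Lemma \ref{holline}; the holomorphic structure is then inherited from that of $\mathcal{O}(-km)$.
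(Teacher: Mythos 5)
Your proposal is correct and follows essentially the same route as the paper: part (1) uses the identical map $v \mapsto (\lambda \mapsto \lambda v)$ with the same equivariance checks, and part (2) makes explicit the "obvious" fibrewise isomorphism that the paper merely asserts and then justifies equivariantly via isometric local trivializations. Your version is simply more detailed, in particular in tracking the $u^{-l}$ twist and the negative-$l$ dualization, and the concluding deduction of Lemma \ref{holline} matches the paper's intended use.
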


\begin{proof}
\begin{enumerate}
  \item Let us define a map $\Phi \colon S(L) \to P_L$ as follows:
For $u_x \in S(L|_x) = S(L)|_x \subset S(L)$, define $\Phi(u_x) \in P_{L ,\, x}$ by setting $\Phi(u_x)(\xi) := u_x \xi \in L|_x \, (\xi \in \mathbb{C})$.
Since we can verify that $\Phi$ is $S^1$-equivariant and $G$-equivariant, this $\Phi$ gives a required isomorphism.
  \item 
  It is obvious that there exists an isomorphism if we forget the $G$-equivariance. In addition, because $G$ acts on $L$ isometrically, under an isometric local trivialization, the action of $g \in G$ is represented by the scalar multiplication of an element in $S^1$. Thus we find that this isomorphism is $G$-equivariant.
\end{enumerate}
\end{proof}

\begin{lem} \label{eqindex}
For $n \geq 1$, $k,\, m \, \in \mathbb{Z}$, we have
\begin{equation} \label{eqindexoncpn}
  \ind_{T^{n+1}} \left( [D_n] \otimes \underline{t}^{-m} \right) = 
    \sum_{q = 0}^n (-1)^q H^q(\mathbb{CP}^n,\, \mathcal{O}(-km)) \quad \in R(T^{n+1}) .
\end{equation}
In particular, 
\begin{equation} \label{key2}
  \ind_{T^{n+1} \times S^1} \left( p_k^*[ D_n ] \right) = 
    \sum_{m \in \mathbb{Z}} \left( \sum_{q = 0}^n (-1)^q H^q(\mathbb{CP}^n,\, \mathcal{O}(-km)) \right) \, t^m .
\end{equation}
\end{lem}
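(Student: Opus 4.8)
The plan is to recognize $\ind_{T^{n+1}}([D_n] \otimes \underline{t}^{-m})$ as the equivariant holomorphic Euler characteristic of $\mathbb{CP}^n$ with coefficients in $\mathcal{O}(-km)$, and then to feed this into the already-established expansion (\ref{key1}) coming from Corollary \ref{spfap}.

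First I would invoke Lemma \ref{holline} to replace the auxiliary bundle $\underline{t}^{-m}$ by the holomorphic line bundle $\mathcal{O}(-km)$: since the isomorphism $\underline{t}^{-m} \cong \mathcal{O}(-km)$ is $T^{n+1}$-equivariant, the twisted symbol $\sigma(D_n) \otimes \mathrm{Id}$ defining $[D_n]\otimes\underline{t}^{-m} \in K_{T^{n+1}}(T^*\mathbb{CP}^n)$ agrees with the one obtained by twisting with $\mathcal{O}(-km)$. Because the equivariant index depends only on the symbol class, I may therefore represent this class by a concrete operator, namely the twisted Dolbeault--Dirac operator $D_n^{(km)} := \bar{\,\partial}_{\mathcal{O}(-km)} + \bar{\,\partial}_{\mathcal{O}(-km)}^*$ acting between $\Gamma(\mathbb{CP}^n, \mathcal{E}^+ \otimes \mathcal{O}(-km))$ and $\Gamma(\mathbb{CP}^n, \mathcal{E}^- \otimes \mathcal{O}(-km))$; its principal symbol is exactly $\sigma(D_n)\otimes\mathrm{Id}_{\mathcal{O}(-km)}$, so $\ind_{T^{n+1}}([D_n]\otimes\underline{t}^{-m}) = \ind_{T^{n+1}} D_n^{(km)}$.

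The core step is a $T^{n+1}$-equivariant Hodge-theoretic computation of $\ind_{T^{n+1}} D_n^{(km)}$. Fixing $T^{n+1}$-invariant Hermitian metrics (the Fubini--Study metric on $\mathbb{CP}^n$ together with the canonical invariant metric on $\mathcal{O}(-km)$ of Remark \ref{hermitian}), the standard identity $(\bar{\,\partial}_{\mathcal{O}(-km)} + \bar{\,\partial}_{\mathcal{O}(-km)}^*)^2 = \Delta_{\bar{\,\partial}}$ on all $\mathcal{O}(-km)$-valued $(0,\ast)$-forms identifies $\knl D_n^{(km)}$ with the space of $\Delta_{\bar{\,\partial}}$-harmonic forms. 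Since $\Delta_{\bar{\,\partial}}$ preserves bidegree, this kernel splits as $\bigoplus_q \mathcal{H}^{0,q}(\mathbb{CP}^n,\mathcal{O}(-km))$, the even part being $\knl D_n^{(km)}$ and the odd part $\knl (D_n^{(km)})^*$. Invariance of the metrics makes $\Delta_{\bar{\,\partial}}$ commute with the $T^{n+1}$-action, so each harmonic space is a finite-dimensional $T^{n+1}$-representation and these splittings are by subrepresentations; hence $\ind_{T^{n+1}} D_n^{(km)} = \sum_{q=0}^n (-1)^q [\mathcal{H}^{0,q}(\mathbb{CP}^n,\mathcal{O}(-km))]$ in $R(T^{n+1})$. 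The Hodge and Dolbeault isomorphisms $\mathcal{H}^{0,q}(\mathbb{CP}^n,\mathcal{O}(-km)) \cong H^q(\mathbb{CP}^n,\mathcal{O}(-km))$ are functorial and hence $T^{n+1}$-equivariant, which yields (\ref{eqindexoncpn}).

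Finally, the displayed identity (\ref{key2}) is immediate: substituting (\ref{eqindexoncpn}) into (\ref{key1}) gives the stated Fourier series in $t$. I expect the only genuine obstacle to be the bookkeeping of the equivariant structure, i.e.\ verifying that every analytic isomorphism used (the Hodge decomposition and the Dolbeault isomorphism) intertwines the $T^{n+1}$-actions; this is guaranteed by the existence of invariant metrics and the functoriality of sheaf cohomology, after which the remaining steps are the standard identification of the index of a rolled-up elliptic complex with its Euler characteristic.
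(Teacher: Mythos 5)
Your proposal is correct and follows essentially the same route as the paper: invoke Lemma \ref{holline} to represent $[D_n]\otimes\underline{t}^{-m}$ by the twisted Dolbeault--Dirac operator $D_{n,\,\mathcal{O}(-km)}$, identify its kernel and cokernel with the even and odd Dolbeault cohomology via the $T^{n+1}$-equivariant Hodge decomposition, and then substitute into (\ref{key1}). The extra care you take with the equivariance of the harmonic-space decomposition is exactly the point the paper relies on implicitly.
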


\begin{proof}
First, (\ref{key2}) immediately follows from (\ref{key1}) and (\ref{eqindexoncpn}). We will verify the equation (\ref{eqindexoncpn}). From Lemma \ref{holline}, we find that $[D_n] \otimes \underline{t}^{-m} \in K_{T^{n+1}}(T^*\mathbb{CP}^n)$ is represented by the Dirac operator with the coefficient $\mathcal{O}(-km)$:
\[
  D_{n,\, \mathcal{O}(-km)} := \bar{\partial}_{\mathcal{O}(-km)} + \bar{\partial}_{\mathcal{O}(-km)}^* 
    \colon \Gamma(\mathbb{CP}^n,\, \mathcal{E}^+ \otimes \mathcal{O}(-km)) 
      \to \Gamma(\mathbb{CP}^n,\, \mathcal{E}^- \otimes \mathcal{O}(-km)) ,
\]
which is $T^{n+1}$-invariant and elliptic, where $\bar{\partial}_{\mathcal{O}(-km)} := \bar{\partial} \otimes \mathrm{Id}_{\mathcal{O}(-km)}$ and $\bar{\partial}_{\mathcal{O}(-km)}^* := \bar{\partial}^* \otimes \mathrm{Id}_{\mathcal{O}(-km)}$.
In addition, using Hodge decomposition (see \cite{cpxgeo} for details), we have $T^{n+1}$-equivariant isomorphisms:
\begin{align*}
  \knl \left( D_{n,\, \mathcal{O}(-km)} \colon \Gamma(\mathbb{CP}^n,\, \mathcal{E}^+ \otimes \mathcal{O}(-km)) 
    \to \Gamma(\mathbb{CP}^n,\, \mathcal{E}^- \otimes \mathcal{O}(-km)) \right) 
      &\cong \bigoplus_{q : even} H^q(\mathbb{CP}^n,\, \mathcal{O}(-km)) , \\
  \knl \left( D_{n,\, \mathcal{O}(-km)}^* \colon \Gamma(\mathbb{CP}^n,\, \mathcal{E}^- \otimes \mathcal{O}(-km)) 
    \to \Gamma(\mathbb{CP}^n,\, \mathcal{E}^+ \otimes \mathcal{O}(-km)) \right) 
      &\cong \bigoplus_{q : odd} H^q(\mathbb{CP}^n,\, \mathcal{O}(-km)) .
\end{align*}
These imply (\ref{eqindexoncpn}).
\end{proof}

Let us introduce the following notation.

\begin{dfn} \label{chi}
For $n\geq 1$,\, $l \in \mathbb{Z}$, we define 
\begin{equation} \label{defchi}
  \chi_{n,\,l} := \sum_{q=0}^n (-1)^q H^q(\mathbb{CP}^n,\, \mathcal{O}(l)) \quad \in R(T^{n+1}) .
\end{equation}
\end{dfn}

Then, from (\ref{key2}), we can express the index of $p_k^*[ D_n ] \in K_{T^{n+1} \times S^1}(T_{S^1}^*\mathbb{CP}^n)$ as follows:
\begin{equation} \label{key3}
  \ind_{T^{n+1} \times S^1} \left( p_k^*[D_n] \right) = \sum_{m \in \mathbb{Z}} \chi_{n,\, -km} \, t^m .
\end{equation}

\subsection{Calculation of cohomology groups}

In order to compute the transversal index of $p_k^*[ D_n ]$, we have to obtain the character of the representation space $H^q(\mathbb{CP}^n,\, \mathcal{O}(m))$ of $T^{n+1}$. See also \cite{cpxgeo}.

%
%
%
%
%
%
%
%
%
%

\begin{lem} \label{K}
Let $K_n$ denote the canonical line bundle of $\mathbb{CP}^n$. Then there is an isomorphism of holomorphic vector bundles over $\mathbb{CP}^n$:
\[
K_n \cong \mathcal{O}(-n-1).
\]
\end{lem}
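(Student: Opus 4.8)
The plan is to realize $K_n = \bigwedge^n T_{\mathbb{C}}^*\mathbb{CP}^n$ as the dual of the determinant line bundle $\det T\mathbb{CP}^n = \bigwedge^n T\mathbb{CP}^n$ of the holomorphic tangent bundle, and to compute the latter by means of the Euler exact sequence. Since there is a canonical isomorphism $\bigwedge^n T_{\mathbb{C}}^*\mathbb{CP}^n \cong (\bigwedge^n T\mathbb{CP}^n)^*$, it suffices to show $\det T\mathbb{CP}^n \cong \mathcal{O}(n+1)$; dualizing then yields $K_n \cong \mathcal{O}(-n-1)$.

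First I would recall the tautological inclusion of $\mathcal{O}(-1)$ into the trivial bundle $\underline{\mathbb{C}}^{n+1} := \mathbb{CP}^n \times \mathbb{C}^{n+1}$: over $\bm{l} \in \mathbb{CP}^n$ the fibre $\mathcal{O}(-1)|_{\bm{l}} = \bm{l}$ sits inside $\mathbb{C}^{n+1}$. Writing $Q$ for the quotient bundle, this gives the short exact sequence of holomorphic vector bundles
\[
  0 \to \mathcal{O}(-1) \to \underline{\mathbb{C}}^{n+1} \to Q \to 0 .
\]
The key geometric input is the canonical identification of the tangent space $T_{\bm{l}}\mathbb{CP}^n$ with $\homo(\bm{l},\, \mathbb{C}^{n+1}/\bm{l})$, which globalizes to a holomorphic isomorphism $T\mathbb{CP}^n \cong \homo(\mathcal{O}(-1),\, Q) = \mathcal{O}(1) \otimes Q$. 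Tensoring the displayed sequence with $\mathcal{O}(1)$ then produces the Euler sequence
\[
  0 \to \mathcal{O}(0) \to \mathcal{O}(1)^{\oplus (n+1)} \to T\mathbb{CP}^n \to 0 .
\]

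Finally I would apply the determinant functor, using that for any short exact sequence $0 \to A \to B \to C \to 0$ of vector bundles one has $\det B \cong \det A \otimes \det C$. Applied to the Euler sequence this gives
\[
  \mathcal{O}(1)^{\otimes(n+1)} \cong \det\bigl(\mathcal{O}(1)^{\oplus(n+1)}\bigr) \cong \det\mathcal{O}(0) \otimes \det T\mathbb{CP}^n \cong \det T\mathbb{CP}^n ,
\]
so that $\det T\mathbb{CP}^n \cong \mathcal{O}(n+1)$, and dualizing yields $K_n \cong \mathcal{O}(-n-1)$ as required.

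I expect the main obstacle to be a careful, chart-independent justification of the isomorphism $T\mathbb{CP}^n \cong \homo(\mathcal{O}(-1),\, Q)$, i.e. that the infinitesimal deformations of a line $\bm{l}$ are parametrized by linear maps $\bm{l} \to \mathbb{C}^{n+1}/\bm{l}$, together with the holomorphy of this identification; everything after that is formal multilinear algebra. As a fallback, one can instead argue by direct computation in the standard affine charts $U_i = \{ z_i \neq 0 \}$: the transition functions of $K_n$ are the Jacobians of the coordinate changes, and a short computation shows that they agree with the $(n+1)$-st power of the transition functions of $\mathcal{O}(-1)$, giving the same conclusion.
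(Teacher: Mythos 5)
Your proof is correct, but it takes a different route from the paper. The paper simply reduces to showing $\bigwedge^n T\mathbb{CP}^n \cong \mathcal{O}(n+1)$ and then compares transition functions on the standard charts $U_i$ (deferring the computation to Huybrechts); your ``fallback'' is exactly this argument. Your primary route instead goes through the Euler sequence: you identify $T\mathbb{CP}^n \cong \homo(\mathcal{O}(-1),\, Q) \cong \mathcal{O}(1)\otimes Q$, tensor the tautological sequence $0 \to \mathcal{O}(-1) \to \underline{\mathbb{C}}^{n+1} \to Q \to 0$ with $\mathcal{O}(1)$, and apply multiplicativity of determinants on short exact sequences. This is cleaner and chart-free once the identification $T\mathbb{CP}^n \cong \homo(\mathcal{O}(-1),\, Q)$ is granted, and it scales better to Grassmannians and other homogeneous spaces; the cost is precisely that identification, which is itself usually verified in charts, so the two arguments are of comparable total length. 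One small remark: the paper's subsequent Remark (that the isomorphism is \emph{not} $T^{n+1}$-equivariant) is invisible in your argument as stated, but your proof only claims a holomorphic isomorphism, which is all the lemma asserts, so there is no gap. If you carried the natural $T^{n+1}$-equivariant structures through the Euler sequence you would in fact recover the character discrepancy $t_1\dots t_{n+1}$ recorded in Remark 3.6, which is a minor bonus of your approach.
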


\begin{proof}
It is sufficient to check that $\mathcal{O}(n+1)$ is isomorphic to the determinant bundle $\bigwedge\nolimits^n T\mathbb{CP}^n$. Thus computation of both transition functions gives the conclusion (see \cite{cpxgeo} for details).
\end{proof}

\begin{rem} \label{notequiv}
There is no $T^{n+1}$-equivariant isomorphism between $K_n$ and $\mathcal{O}(-n-1)$.
Indeed, since $p_1 := [1 : 0 : \dots : 0] \in \mathbb{CP}^n$ is a fixed point of the action of $T^{n+1}$, the fibers $K_n|_{p_1}$ and $\mathcal{O}(-n-1)|_{p_1}$ on $p_1$ are representation spaces of $T^{n+1}$. It is easy to check that the characters of these representations are given by $\displaystyle \frac{t_1^n}{t_2 \dots t_{n + 1}}$ and $t_1^{n + 1}$ respectively.
\end{rem}

\begin{prop} \label{cohomology}
For $n \geq 1$, and $m \in \mathbb{Z}$, we have the following isomorphisms of representations of $T^{n+1}$:
\begin{itemize}
  \item $q=0$ 
    \[H^0(\mathbb{CP}^n, \, \mathcal{O}(m)) \cong
      \left\{
        \begin{split}
          &S^m((\mathbb{C}^{n+1})^*) &\quad &(m \geq 0) \, , \\
          &\qquad 0 &\quad &(m < 0) \, ,
        \end{split}
      \right. 
    \]

  \item $0<q<n$
    \[
      H^q(\mathbb{CP}^n, \, \mathcal{O}(m)) \cong 0 \, ,
    \]

  \item $q=n$
    \[
      H^n(\mathbb{CP}^n, \, \mathcal{O}(m)) \cong
        \left\{ \begin{split}
          &\qquad 0 &\quad &(m>-n-1) \, , \\
          &(\bigwedge\nolimits^{n+1} \mathbb{C}^{n+1}) \otimes S^{-m-n-1}(\mathbb{C}^{n+1}) &\quad &(m \leq -n-1) \, ,
        \end{split} \right.
    \]
\end{itemize}
where $S^i(V)$ is the symmetric algebra of a vector space $V$ of degree $i$ and we regard $\mathbb{C}^{n+1}$ as a representation space of $T^{n+1}$ defined by a canonical action of $T^{n+1}$:
\[
  T^{n+1} \ni (u_1,\, \dots , \, u_{n+1}) \colon (z_1,\, \dots ,\, z_{n+1}) \mapsto (u_1z_1,\, \dots ,\, u_{n+1}z_{n+1}).
\]
\end{prop}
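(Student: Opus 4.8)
The plan is to treat the three ranges of $q$ separately: a direct description of global sections handles $q=0$, the Kodaira vanishing theorem kills cohomology on a half-line of degrees, and $T^{n+1}$-equivariant Serre duality transports the remaining cases back to the $q=0$ computation. Throughout, the nontrivial point is to keep track of the $T^{n+1}$-equivariant structure, not merely the isomorphism type of the bundles.

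First I would dispose of $q=0$. Global holomorphic sections of $\mathcal{O}(1)$ are exactly the restrictions of linear functionals on $\mathbb{C}^{n+1}$ to the tautological lines, so $H^0(\mathbb{CP}^n, \mathcal{O}(1)) \cong (\mathbb{C}^{n+1})^*$ as $T^{n+1}$-representations, and hence $H^0(\mathbb{CP}^n, \mathcal{O}(m)) \cong S^m((\mathbb{C}^{n+1})^*)$ for $m \geq 0$ by passing to symmetric powers; for $m < 0$ there are no nonzero homogeneous polynomials of negative degree, so $H^0 = 0$. This is the only place the explicit generators are produced; every other value is reduced to it. Next I would apply Kodaira vanishing: writing $\mathcal{O}(m) = K_n \otimes \mathcal{O}(m+n+1)$ via Lemma \ref{K}, the twisting bundle $\mathcal{O}(m+n+1)$ is positive precisely when $m+n+1 \geq 1$, i.e. $m > -n-1$, and then $H^q(\mathbb{CP}^n, \mathcal{O}(m)) = 0$ for all $q > 0$. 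This settles both the intermediate range $0 < q < n$ and the top degree $q=n$ at once, but only for $m > -n-1$.

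To reach the remaining degrees I would invoke $T^{n+1}$-equivariant Serre duality $H^q(\mathbb{CP}^n, \mathcal{O}(m)) \cong H^{n-q}(\mathbb{CP}^n, K_n \otimes \mathcal{O}(-m))^*$. For $0 < q < n$ and $m \leq -n-1$ the right-hand group is $H^{n-q}$ of a bundle whose underlying line bundle $\mathcal{O}(-m-n-1)$ has nonnegative degree, with $0 < n-q < n$; since $-m-n-1 > -n-1$, this vanishes by the Kodaira step, completing the claim that all intermediate cohomology vanishes. For $q = n$ and $m \leq -n-1$ the right-hand group becomes $H^0(\mathbb{CP}^n, K_n \otimes \mathcal{O}(-m))^*$, which reduces to the $q=0$ computation.

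The main obstacle is precisely the one flagged in Remark \ref{notequiv}: Lemma \ref{K} gives $K_n \cong \mathcal{O}(-n-1)$ only non-equivariantly, so to obtain the representation-theoretic answer (in particular the factor $\bigwedge\nolimits^{n+1}\mathbb{C}^{n+1}$) I must pin down the equivariant twist. I would do this from the $T^{n+1}$-equivariant Euler sequence $0 \to \mathcal{O} \to \mathcal{O}(1) \otimes \mathbb{C}^{n+1} \to T\mathbb{CP}^n \to 0$, where $\mathbb{C}^{n+1}$ denotes the trivial bundle with the standard $T^{n+1}$-action and $\mathcal{O}$ carries the trivial action; taking determinants yields $\det T\mathbb{CP}^n \cong \mathcal{O}(n+1) \otimes \bigwedge\nolimits^{n+1}\mathbb{C}^{n+1}$, hence $K_n \cong \mathcal{O}(-n-1) \otimes \bigwedge\nolimits^{n+1}(\mathbb{C}^{n+1})^*$ equivariantly, consistent with the fiberwise characters recorded in Remark \ref{notequiv}. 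Substituting this into the Serre-duality computation for $q=n$ gives $H^0(\mathbb{CP}^n, K_n \otimes \mathcal{O}(-m)) \cong S^{-m-n-1}((\mathbb{C}^{n+1})^*) \otimes \bigwedge\nolimits^{n+1}(\mathbb{C}^{n+1})^*$, and dualizing produces exactly $(\bigwedge\nolimits^{n+1}\mathbb{C}^{n+1}) \otimes S^{-m-n-1}(\mathbb{C}^{n+1})$, as claimed. The one further point deserving care is verifying that the Serre-duality isomorphism is genuinely $T^{n+1}$-equivariant, which holds because the action is by holomorphic automorphisms and the pairing is induced by the equivariant trace on $H^n(\mathbb{CP}^n, K_n)$.
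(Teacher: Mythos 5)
Your proof is correct and follows the same overall skeleton as the paper's (explicit description of $H^0$ via homogeneous polynomials, Kodaira vanishing for $m > -n-1$, Serre duality for the remaining degrees), but it diverges at the one step where the equivariant structure genuinely has to be pinned down, namely $q=n$ with $m \leq -n-1$. The paper, having observed in Remark \ref{notequiv} that $K_n \cong \mathcal{O}(-n-1)$ is \emph{not} $T^{n+1}$-equivariant, uses that isomorphism only to count dimensions, and then builds an explicit basis of $H^0(\mathbb{CP}^n,\, K_n \otimes \mathcal{O}(-m))$ by gluing the local holomorphic $n$-forms $(-1)^{i+1}\, d\zeta_1^{(i)} \wedge \dots \wedge d\zeta_n^{(i)} \otimes s_i^{n+1} s_1^{r_1} \cdots s_{n+1}^{r_{n+1}}$, checking the gluing with a Jacobian computation (Claims \ref{transition} and \ref{gluing}) and reading off the torus action on each basis vector. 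You instead determine the equivariant canonical bundle once and for all from the equivariant Euler sequence, getting $K_n \cong \mathcal{O}(-n-1) \otimes \bigwedge\nolimits^{n+1}(\mathbb{C}^{n+1})^*$, and feed this into Serre duality; this is consistent with the fiberwise characters recorded in Remark \ref{notequiv} and yields the same answer with no coordinate computations. Your route is shorter and more conceptual; the paper's is more self-contained and produces explicit section representatives. One minor point: your vanishing of $H^0(\mathbb{CP}^n,\, \mathcal{O}(m))$ for $m<0$ is asserted rather than derived --- the identification with degree-$m$ polynomials is only set up for $m \geq 0$ --- whereas the paper gets this case for free from Serre duality plus Kodaira ($0 \leq q < n$, $m<0$), an argument you could also borrow; this is a standard fact, not a real gap.
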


The next corollary immediately follows from this proposition.

\begin{cor}\label{chicharacter}
Let $n \geq 1$, and $l \in \mathbb{Z}$. Then,
\[
  \chi_{n,\,l} =
    \left\{
      \begin{split} 
        &\quad \sum_{\substack{r_1 + \dots + r_{n+1} = l \\ r_1,\, \dots,\, r_{n+1} \geq 0}} 
          t_1^{-r_1} \dots t_{n+1}^{-r_{n+1}} &\quad &(l \geq 0) \quad , \\[5pt]
        &\qquad \qquad 0 &\quad &(-n-1 < l < 0) \quad , \\[5pt]
        &(-1)^n t_1 \dots t_{n+1} \sum_{\substack{r_1 + \dots + r_{n+1} = -l-n-1 \\ r_1,\, \dots,\, r_{n+1} \geq 0}} 
          t_1^{r_1} \dots t_{n+1}^{r_{n+1}} &\quad &(l \leq -n-1) \quad .
      \end{split}
    \right.
\]
\end{cor}

\begin{rem}
If we set $n=0$ in the right hand side of the above equation, we obtain $t_1^{-l}$ \, for all $(l \in \mathbb{Z})$.
This is why, we define  
\[
  \chi_{0,\, l} := t_1^{-l} \quad (\, l \in \mathbb{Z} \, )
\]
for convenience.
\end{rem}

\begin{proof}[Proof of Proposition \ref{cohomology}]
Let us recall Kodaira vanishing theorem: for any positive line bundle $L$ over an $n$-dimensional compact K\"{a}hler manifold $M$, we obtain $H^{q}(M,\, \Omega_M^p \otimes L) \cong 0$ for $p + q > n$.
Here we denote by $\Omega_M^p$ the space of holomorphic $p$-forms on $M$ for $p = 0 ,\, \dots ,\, n$.
If $0 < q \leq n$ and $m > -n-1$, then from Lemma \ref{K} and Kodaira vanishing theorem, we have
\[
  H^q(\mathbb{CP}^n,\, \mathcal{O}(m)) \cong H^q(\mathbb{CP}^n,\, K_n \otimes \mathcal{O}(m+n+1)) \cong 0.
\]
Moreover if $0 \leq q <n$ and $m < 0$, using Kodaira vanishing theorem, we have 
\[
  H^q(\mathbb{CP}^n,\, \mathcal{O}(m)) \cong H^{n-q}(\mathbb{CP}^n,\, K_n \otimes \mathcal{O}(-m))^* \cong 0.
\]
Here the first isomorphism follows from Serre duality, which claims that for any holomorphic vector bundle $E$ over an $n$-dimensional compact complex manifold $M$, there exist natural isomorphisms $H^q(M,\, \Omega_M^p \otimes E) \cong H^{n-q}(M,\, \Omega_M^{n-p} \otimes E^*)^*$.

The rest cases follow from the below.
\end{proof}

\begin{lem} \label{cohomology0}
For any integer $m \geq 0$, we have an isomorphism of representations of $T^{n+1}$:
\[
  H^0(\mathbb{CP}^n,\, \mathcal{O}(m)) \cong S^m((\mathbb{C}^{n+1})^*) \, ,
\]
where $\mathbb{C}^{n+1}$ is provided with the canonical action of $T^{n+1}$ (as in Proposition \ref{cohomology}).
\end{lem}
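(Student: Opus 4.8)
The plan is to prove the classical identification of global holomorphic sections of $\mathcal{O}(m)$ with degree-$m$ homogeneous polynomials, and then to check that this identification is $T^{n+1}$-equivariant. The starting point is the geometric description of $\mathcal{O}(-1)$ recalled in Section \ref{setting}: the complement of the zero section in the total space of $\mathcal{O}(-1)$ is biholomorphic to $\mathbb{C}^{n+1} \setminus \{0\}$ via $(\bm{l},\, v) \mapsto v$, and the induced projection to $\mathbb{CP}^n$ is exactly the tautological $\mathbb{C}^*$-bundle $v \mapsto [v]$. Since $\mathcal{O}(m) = (\mathcal{O}(-1)^*)^{\otimes m}$, a holomorphic section $s \in H^0(\mathbb{CP}^n,\, \mathcal{O}(m))$ is the same data as a holomorphic function $\tilde{s}$ on the total space of $\mathcal{O}(-1)$ that is homogeneous of degree $m$ along fibers; restricting $\tilde{s}$ to the complement of the zero section produces a holomorphic function on $\mathbb{C}^{n+1}\setminus\{0\}$ satisfying $\tilde{s}(\lambda v) = \lambda^m \tilde{s}(v)$ for all $\lambda \in \mathbb{C}^*$.

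First I would make this correspondence $s \mapsto \tilde{s}$ precise and verify that it is a linear bijection onto the space of degree-$m$ homogeneous holomorphic functions on $\mathbb{C}^{n+1}\setminus\{0\}$. Next, since $n \geq 1$ the puncture $\{0\}$ has complex codimension $n+1 \geq 2$, so Hartogs' extension theorem extends $\tilde{s}$ to a holomorphic function on all of $\mathbb{C}^{n+1}$. Expanding this extension in a power series at the origin and comparing with the homogeneity relation $\tilde{s}(\lambda v) = \lambda^m \tilde{s}(v)$ forces every Taylor coefficient of total degree $\neq m$ to vanish; hence $\tilde{s}$ is a homogeneous polynomial of degree $m$, that is, an element of $S^m((\mathbb{C}^{n+1})^*)$. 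Conversely, restricting any such polynomial to the lines $\bm{l}$ defines a holomorphic section, so the map is a linear isomorphism of vector spaces.

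The remaining and most delicate step is to verify that this isomorphism intertwines the two $T^{n+1}$-actions. I would trace the action of $u = (u_1,\, \dots,\, u_{n+1}) \in T^{n+1}$ through the construction: the action on $\mathbb{C}^{n+1}$ is $u \cdot v = (u_1 v_1,\, \dots,\, u_{n+1} v_{n+1})$, and, following the conventions fixed in Remark \ref{action} and Section \ref{setting}, the induced left action on sections of $\mathcal{O}(m)$ corresponds on the function side to the pullback $(u \cdot \tilde{s})(v) = \tilde{s}(u^{-1}\cdot v)$. Under this action the coordinate functions $z_i \in (\mathbb{C}^{n+1})^*$ transform by the character $t_i^{-1}$, so a monomial $z_1^{r_1}\cdots z_{n+1}^{r_{n+1}}$ transforms by $t_1^{-r_1}\cdots t_{n+1}^{-r_{n+1}}$; summing over $r_1 + \dots + r_{n+1} = m$ recovers precisely the character of $S^m((\mathbb{C}^{n+1})^*)$, consistent with the $l \geq 0$ case of Corollary \ref{chicharacter}. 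I expect the main obstacle to be purely bookkeeping: keeping the contragredient (dual) action straight so that the characters come out as $t_i^{-1}$ rather than $t_i$, and confirming that the inverse conventions of Remark \ref{action} are applied consistently when passing between sections of $\mathcal{O}(m)$ and homogeneous functions on $\mathbb{C}^{n+1}\setminus\{0\}$.
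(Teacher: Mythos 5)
Your proposal is correct and follows essentially the same route as the paper: both identify $H^0(\mathbb{CP}^n,\,\mathcal{O}(m))$ with degree-$m$ homogeneous polynomials on $\mathbb{C}^{n+1}$ and then read off the $T^{n+1}$-action on monomials, with the coordinate functions transforming by $t_i^{-1}$ so that the character is $\sum t_1^{-r_1}\cdots t_{n+1}^{-r_{n+1}}$. The only difference is that you supply the bijectivity via Hartogs' extension and the homogeneity of the Taylor expansion, whereas the paper constructs the map polynomial-to-section explicitly and cites Huybrechts (Proposition 2.4.1) for bijectivity, so your version is in fact slightly more self-contained.
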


\begin{proof}
Let $(z_1,\, \dots ,\, z_{n+1})$ be the standard global coordinate of $\mathbb{C}^{n+1}$, and $\mathbb{C}[z_1,\, \dotsm,\, z_{n+1}]_m$ be the space of homogeneous polynomials of degree $m$.
We can regard $\alpha \in \mathbb{C}[z_1,\, \dotsm,\, z_{n+1}]_m$ as a linear function on $(\mathbb{C}^{n+1})^{\otimes m}$.
Indeed, for $1 \leq j_1 \leq j_2 \leq \dots \leq j_m \leq n+1$, a homogeneous polynomial $z_{j_1}z_{j_2}\dots z_{j_m}$ defines the following $\mathbb{C}$-valued linear function on $(\mathbb{C}^{n+1})^{\otimes m}$:


\begin{align*}
  z_{j_1}z_{j_2}\dots z_{j_m} \colon v^{(1)} \otimes v^{(2)} \otimes \dots \otimes v^{(m)} \longmapsto 
    &\frac{1}{m!} \sum_{\sigma \in S_m} z_{j_1}(v^{(\sigma(1))}) z_{j_2}(v^{(\sigma(2))}) \dots z_{j_m}(v^{(\sigma(m))}) \\
    &= \frac{1}{m!} \sum_{\sigma \in S_m} v_{j_1}^{(\sigma(1))} v_{j_2}^{(\sigma(2))} \dots v_{j_m}^{(\sigma(m))} ,
\end{align*}
where $v^{(i)} = (v_1^{(i)},\, \dotsm ,\, v_{n+1}^{(i)})$ denotes an element of $\mathbb{C}^{n+1}$, and $S_m$ denotes the $m$-th symmetric group.
Thus each $\alpha \in \mathbb{C}[z_1,\, \dotsm,\, z_{n+1}]_m$ defines a holomorphic map $\mathbb{CP}^n \times (\mathbb{C}^{n+1})^{\otimes m} \to \mathbb{C}$ which is linear on each fiber $(\mathbb{C}^{n+1})^{\otimes m}$.
Since $\mathcal{O}(-m)$ is a subbundle of $\mathbb{CP}^n \times (\mathbb{C}^{n+1})^{\otimes m}$, restricting to $\mathcal{O}(-m)$, we get a holomorphic section of $\mathcal{O}(m)$.
In this way, we can define a linear map 
\[
  \varphi \colon \mathbb{C}[z_1,\, \dotsm ,\, z_{n+1}]_m \to 
    \Gamma_{\rm{hol}}(\mathbb{CP}^n,\, \mathcal{O}(m)) \cong H^0(\mathbb{CP}^n,\, \mathcal{O}(m)).
\]
Let us denote by $s_{j_1} \dotsm s_{j_m}$ the section corresponding to $z_{j_1} \dotsm z_{j_m}$, i.e. $\varphi (z_{j_1} \dotsm z_{j_m}) = s_{j_1} \dotsm s_{j_m}$.
Actually, this map $\varphi$ is bijective (see \cite{cpxgeo} Proposition 2.4.1 for details).  Finally we calculate the characters of these representation spaces.
In general, when $E \to M$ is a $G$-equivariant vector bundle, $G$ acts on the space of sections $\Gamma(M,\, E)$ as follows:
\[
  \left( g \cdot s \right) (x) := g \left( s (g^{-1} \cdot x) \right) \quad \in E_x
\]
for $g \in G$ and $s \in \Gamma(M,\, E)$. 
Let $u = (u_1,\, \dots ,\, u_{n+1}) \in T^{n+1}$. Then by the definition of $s_i$,
\[
  (u \cdot s_i)(\bm{l}) = u_i^{-1} (s_i(\bm{l})) \quad \in \mathcal{O}(1)|_{\bm{l}} = 
    \homo(\bm{l},\, \mathbb{C}) \quad \text{for} \quad \bm{l} \in \mathbb{CP}^n.
\] 
Analogously, 
\[
  \left( u \cdot (s_{j_1} \dotsm s_{j_m}) \right) (\bm{l}) = u_{j_1}^{-1} \dotsm u_{j_m}^{-1} (s_{j_1} \dotsm s_{j_m})(\bm{l}) 
    \quad \in \mathcal{O}(m)|_{\bm{l}} = \homo(\bm{l}^{\otimes m},\, \mathbb{C}).
\]
Thus we find that there exists an isomorphism $H^0(\mathbb{CP}^n,\, \mathcal{O}(m)) \cong S^m((\mathbb{C}^{n+1})^*)$ and their characters are given by $\displaystyle \sum_{\substack{ r_1 + \dots + r_{n+1} = m \\ r_1,\, \dots ,\, r_{n+1} \geq 0}} t_1^{-r_1} \dotsm t_{n+1}^{-r_{n+1}}$.
\end{proof}

\begin{lem} \label{cohomologyn}
For any integer $m \leq -n-1$, we have an isomorphism of representations of $T^{n+1}$:
\[
  H^n(\mathbb{CP}^n,\, \mathcal{O}(m)) \cong 
    \left( \bigwedge\nolimits^{n+1} \mathbb{C}^{n+1} \right) \otimes S^{-m-n-1}(\mathbb{C}^{n+1}) \, ,
\]
where $\mathbb{C}^{n+1}$ is given the same action of $T^{n+1}$ as in Lemma \ref{cohomology0}.
\end{lem}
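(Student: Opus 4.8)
The plan is to reduce the statement to the already-settled $H^0$ computation of Lemma \ref{cohomology0} via Serre duality, the point being that for $m \le -n-1$ the ``dual'' twist is non-negative. Serre duality (recalled in the proof of Proposition \ref{cohomology}) provides a $T^{n+1}$-equivariant isomorphism
\[
  H^n(\mathbb{CP}^n,\, \mathcal{O}(m)) \cong H^0(\mathbb{CP}^n,\, K_n \otimes \mathcal{O}(-m))^* ,
\]
the equivariance coming from the fact that the trace isomorphism $H^n(\mathbb{CP}^n,\, K_n) \cong \mathbb{C}$ carries the trivial $T^{n+1}$-action, so the Serre pairing is $T^{n+1}$-invariant. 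Thus the whole problem is to evaluate $H^0$ of $K_n \otimes \mathcal{O}(-m)$ as a $T^{n+1}$-representation and then dualize.

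The key step, and the main obstacle, is to identify the canonical bundle \emph{equivariantly}. By Lemma \ref{K} we have $K_n \cong \mathcal{O}(-n-1)$ as holomorphic line bundles, but this isomorphism is not $T^{n+1}$-equivariant (Remark \ref{notequiv}); a naive substitution would produce the wrong character. Since $T^{n+1}$ is connected and $\mathbb{CP}^n$ is connected, any two equivariant structures on one underlying line bundle differ by a single character of $T^{n+1}$ (their difference is an equivariant structure on the trivial bundle, whose global sections are the constants). Comparing the fibers at the fixed point $p_1$ recorded in Remark \ref{notequiv} --- character $t_1^n/(t_2 \cdots t_{n+1})$ for $K_n$ against $t_1^{n+1}$ for $\mathcal{O}(-n-1)$, whose ratio is $(t_1 \cdots t_{n+1})^{-1}$ --- pins this character down and yields a $T^{n+1}$-equivariant isomorphism
\[
  K_n \cong \mathcal{O}(-n-1) \otimes \bigl( \bigwedge\nolimits^{n+1} \mathbb{C}^{n+1} \bigr)^* .
\]
(Alternatively this follows by taking the top exterior power of the equivariant Euler sequence $0 \to \mathcal{O} \to \mathcal{O}(1) \otimes \mathbb{C}^{n+1} \to T\mathbb{CP}^n \to 0$, which avoids fixed-point bookkeeping.)

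With this in hand, $K_n \otimes \mathcal{O}(-m) \cong \mathcal{O}(-m-n-1) \otimes (\bigwedge^{n+1} \mathbb{C}^{n+1})^*$ equivariantly, and since $-m-n-1 \ge 0$, Lemma \ref{cohomology0} applies (the constant tensor factor passing freely through the cohomology):
\[
  H^0(\mathbb{CP}^n,\, K_n \otimes \mathcal{O}(-m)) \cong S^{-m-n-1}((\mathbb{C}^{n+1})^*) \otimes \bigl( \bigwedge\nolimits^{n+1} \mathbb{C}^{n+1} \bigr)^* .
\]
Finally I would take the linear dual and use the $T^{n+1}$-equivariant identifications $(S^d(W^*))^* \cong S^d(W)$ and $((\bigwedge^{n+1}\mathbb{C}^{n+1})^*)^* \cong \bigwedge^{n+1}\mathbb{C}^{n+1}$ (both immediate on characters for a torus) to conclude
\[
  H^n(\mathbb{CP}^n,\, \mathcal{O}(m)) \cong \bigl( \bigwedge\nolimits^{n+1} \mathbb{C}^{n+1} \bigr) \otimes S^{-m-n-1}(\mathbb{C}^{n+1}) ,
\]
as claimed. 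As a consistency check, the resulting character of $H^n$ is $t_1 \cdots t_{n+1} \sum_{r_1 + \dots + r_{n+1} = -m-n-1} t_1^{r_1} \cdots t_{n+1}^{r_{n+1}}$, which agrees (up to the sign $(-1)^n$ supplied by the alternating sum) with the $q=n$ contribution in Corollary \ref{chicharacter}.
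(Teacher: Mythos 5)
Your argument is correct, and it takes a genuinely different route from the paper. The paper also starts from Serre duality, $H^n(\mathbb{CP}^n,\mathcal{O}(m)) \cong H^0(\mathbb{CP}^n, K_n\otimes\mathcal{O}(-m))^*$, but it then works entirely by hand: it uses the non-equivariant isomorphism of Lemma \ref{K} only to extract the dimension $\binom{-m-1}{n}$, and then constructs an explicit basis of $H^0(\mathbb{CP}^n, K_n\otimes\mathcal{O}(-m))$ out of local sections $\omega^i_{r_1,\dots,r_{n+1}} = (-1)^{i+1}\, d\zeta_1^{(i)}\wedge\dots\wedge d\zeta_n^{(i)}\otimes s_i^{n+1}s_1^{r_1}\cdots s_{n+1}^{r_{n+1}}$, verifying via two technical claims (a Jacobian/transition-function computation and a gluing check) that these patch to global sections, and finally computing the torus action on each $\omega_{r_1,\dots,r_{n+1}}$ directly to read off the character. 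You instead upgrade Lemma \ref{K} to the equivariant isomorphism $K_n \cong \mathcal{O}(-n-1)\otimes\bigl(\bigwedge^{n+1}\mathbb{C}^{n+1}\bigr)^*$ by observing that two holomorphic equivariant structures on one underlying line bundle over the connected compact base differ by a single character (the induced structure on the trivial bundle acts on the constants), and you pin that character down from the fixed-point data already recorded in Remark \ref{notequiv}; everything then reduces to Lemma \ref{cohomology0}. Your route is shorter and conceptually cleaner --- it eliminates Claims \ref{transition} and \ref{gluing} entirely, and the Euler-sequence variant you mention makes the equivariant identification of $K_n$ canonical rather than a fixed-point bookkeeping exercise. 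What the paper's approach buys in exchange is self-containedness and an explicit equivariant basis of sections, with no appeal to the (standard, but unproved in the paper) rigidity statement about equivariant structures; if you wanted to make your version airtight you would spell out that one-line rigidity argument, since it does use holomorphy (constancy of global holomorphic functions on $\mathbb{CP}^n$) and would not follow from fixed-point weights alone, the fixed-point set being disconnected. Your final character check against Corollary \ref{chicharacter}, including the sign $(-1)^n$ coming from the alternating sum defining $\chi_{n,l}$, is consistent with the paper.
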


\begin{proof}
Let $m \leq -n-1$, then from Serre duality, 
\begin{equation} \label{cohnkey1}
  H^n(\mathbb{CP}^n,\, \mathcal{O}(m)) \cong H^0(\mathbb{CP}^n,\, K_n \otimes \mathcal{O}(-m))^*.
\end{equation}
Since the pairing used in Serre duality is canonical, this isomorphism is $T^{n+1}$-equivariant.
In addition, using Lemma \ref{K}, we have an isomorphism of vector spaces
\begin{equation} \label{cohnkey2}
  H^0(\mathbb{CP}^n,\, K_n \otimes \mathcal{O}(-m)) \cong H^0(\mathbb{CP}^n,\, \mathcal{O}(-m-n-1)).
\end{equation}
Here this isomorphism is not $T^{n+1}$-equivariant because the isomorphism in Lemma \ref{K} is not $T^{n+1}$-equivariant (see Remark \ref{notequiv}). Thus we can not calculate the character of $H^n(\mathbb{CP}^n,\, \mathcal{O}(m))$ from the above isomorphism.
However since $-m-n-1 \geq 0$, we can obtain only the dimension of $H^n(\mathbb{CP}^n,\, \mathcal{O}(m))$ from Lemma \ref{cohomology0}, (\ref{cohnkey1}) and (\ref{cohnkey2}) as follows: 
\begin{align} \label{dimcoh}
  \dim H^n(\mathbb{CP}^n,\, \mathcal{O}(m)) = \dim H^0(\mathbb{CP}^n,\, \mathcal{O}(-m-n-1))^* = 
    \dim S^{-m-n-1}(\mathbb{C}^{n+1}) = \left( \substack{-m-1 \\ \\ n} \right) .
\end{align}
We will construct holomorphic sections of $K_n \otimes \mathcal{O}(-m)$ which form a basis of $H^0(\mathbb{CP}^n,\, K_n \otimes \mathcal{O}(-m))$.
Let $\displaystyle \mathbb{CP}^n = \bigcup_{i=1}^{n+1} U_i$, where $U_i = \{ [z_1 : \dotsm : z_{n+1}] \in \mathbb{CP}^n | z_i \neq 0 \}$ be the standard open covering of $\mathbb{CP}^n$ and $\varphi_i \colon U_i \to \mathbb{C}^n$ be the standard local coordinate given by
\[
  \varphi_i \left( [z_1 : \dotsm : z_{n+1}] \right) = 
    \left( \frac{z_1}{z_i},\ \dots ,\, \frac{z_{i-1}}{z_i},\, \frac{z_{i+1}}{z_i},\, \dots,\, \frac{z_{n+1}}{z_i} \right) .
\]
Let $(\zeta_1^{(i)},\, \dots ,\, \zeta_n^{(i)})$ be the coordinate of $\mathbb{C}^n = \varphi_i(U_i)$, then we have
\[
  \left( \zeta_1^{(i)},\, \dots ,\, \zeta_n^{(i)} \right) = 
    \left( \frac{z_1}{z_i},\ \dots ,\, \frac{z_{i-1}}{z_i},\, \frac{z_{i+1}}{z_i},\, \dots,\, \frac{z_{n+1}}{z_i} \right) .
\]
We will prove the following two claims at the end of this subsection.

\begin{clm} \label{transition}
On the intersection $U_i \cap U_j$, 
\[
  d\zeta_1^{(i)} \wedge \dots \wedge d\zeta_n^{(i)} = 
    (-1)^{i+j}\frac{z_j^{n+1}}{z_i^{n+1}} d\zeta_1^{(j)} \wedge \dots \wedge d\zeta_n^{(j)}.
\]
\end{clm}

Let us define local sections of $K_n \otimes \mathcal{O}(-m)$ as follows:
for $1 \leq i \leq n + 1$, and $(r_1,\, \dots ,\, r_{n + 1}) \in \mathbb{Z}_{\geq 0}^{n + 1}$ such that $r_1 + \dots + r_{n+1} = -m-n-1$, we define holomorphic sections on $U_i$ of $K_n \otimes \mathcal{O}(-m)$
\[
  \omega_{r_1,\, \dots ,\, r_{n + 1}}^i := 
    (-1)^{i + 1} d\zeta_1^{(i)} \wedge \dots \wedge d\zeta_n^{(i)} \otimes s_i^{n + 1} s_1^{r_1} \dots s_{n + 1}^{r_{n + 1}}.
\]
For each $(r_1,\, \dots ,\, r_{n + 1}) \in \mathbb{Z}_{\geq 0}^{n + 1}$ such that $r_1 + \dots + r_{n+1} = -m-n-1$, the family $\{ \omega_{r_1,\, \dots ,\, r_{n+1}}^i \}_{i = 1 ,\, \dots ,\, n+1}$ of local sections provides a global section $\omega_{r_1,\, \dots ,\, r_{n+1}}$ of $K_n \otimes \mathcal{O}(-m)$, that is,

\begin{clm} \label{gluing}
For $(r_1,\, \dots ,\, r_{n+1}) \in \mathbb{Z}_{\geq 0}^{n+1}$ such that $r_1 + \dots + r_{n+1} = -m-n-1$, the sections $\{ \omega_{r_1,\, \dots ,\, r_{n+1}}^i \}_{i = 1 ,\, \dots ,\, n+1}$ coincide on the intersections i.e.,
\[
  \omega_{r_1,\, \dots ,\, r_{n+1}}^i = \omega_{r_1,\, \dots ,\, r_{n+1}}^j \quad \text{on} \quad U_i \cap U_j.
\]
\end{clm}

Using these claims, we complete the proof of Lemma \ref{cohomologyn}.
Obviously, $\{ \omega_{r_1,\, \dots ,\, r_{n + 1}} \}$ are linearly independent, thus these form a basis of $H^0(\mathbb{CP}^n,\, K_n \otimes \mathcal{O}(-m))$ since we have proved that $\dim H^0(\mathbb{CP}^n,\, K_n \otimes \mathcal{O}(-m)) = \left( \substack{-m-1 \\ n} \right)$ in (\ref{dimcoh}).
Moreover since the action of any $u = (u_1,\, \dots ,\, u_{n + 1}) \in T^{n + 1}$ preserves $U_i$, we can find out its action on $\omega_{r_1,\, \dots ,\, r_n}$ in locally as follows:
\begin{align*}
  &\quad u \cdot \omega_{r_1,\, \dots ,\, r_n} \\[5pt]
  &= (-1)^{i + 1} \frac{u_i}{u_1} \dots \frac{u_i}{u_{i - 1}} \frac{u_i}{u_{i + 1}} \dots \frac{u_i}{u_{n+1}} 
    d\zeta_1^{(i)} \wedge \dots \wedge d\zeta_n^{(i)} \otimes 
      u_i^{- ( n + 1 )} u_1^{- r_1} \dots u_{n + 1}^{- r_{n + 1}} s_i^{n + 1} s_1^{r_1} \dots s_{n + 1}^{r_{n + 1}} \\[5pt]
  &= (-1)^{i+1} u_1^{-1} \dots u_{n+1}^{-1} u_1^{-r_1} \dots u_{n+1}^{-r_{n+1}} 
    d\zeta_1^{(i)} \wedge \dots \wedge d\zeta_n^{(i)} \otimes s_i^{n+1} s_1^{r_1} \dots s_{n+1}^{r_{n+1}} \\[5pt]
  &= u_1^{-1} \dots u_{n+1}^{-1} u_1^{-r_1} \dots u_{n+1}^{-r_{n+1}} \omega_{r_1,\, \dots ,\, r_{n+1}}
\end{align*}
on $U_i$.
Thus for each $(r_1 ,\, \dots r_{n+1}) \in \mathbb{Z}_{\geq 0}^{n+1}$ such that $r_1 \cdots r_{n+1} = -m -n -1$, the space $\langle \omega_{r_1,\, \dots ,\, r_{n+1}} \rangle$ which is spanned by $\omega_{r_1,\, \dots ,\, r_{n+1}}$ over $\mathbb{C}$ is a subrepresentation space of $H^0(\mathbb{CP}^n,\, K_n \otimes \mathcal{O}(-m))$ of $T^{n+1}$ and its character is $t_1^{-1} \dots t_{n+1}^{-1} t_1^{-r_1} \dots t_{n+1}^{-r_{n+1}}$. This implies that the character of $H^0(\mathbb{CP}^n,\, K_n \otimes \mathcal{O}(-m))$ is given by $\displaystyle t_1^{-1} \dots t_{n+1}^{-1} \sum_{\substack{r_1 + \dots + r_{n+1} = -m-n-1 \\ r_1,\, \dots ,\, r_{n+1} \geq 0}} t_1^{-r_1} \dots t_{n+1}^{-r_{n+1}}$ and 
\begin{align*}
  H^0(\mathbb{CP}^n,\, K_n \otimes \mathcal{O}(-m)) \cong 
    (\bigwedge\nolimits^{n+1} ( \mathbb{C}^{n+1} )^*) \otimes S^{-m-n-1}( (\mathbb{C}^{n+1})^* ) .
\end{align*}
Therefore from (\ref{cohnkey1}), we conclude that
\[
  H^n(\mathbb{CP}^n,\, \mathcal{O}(m)) \cong H^0(\mathbb{CP}^n,\, K_n \otimes \mathcal{O}(-m))^* \cong 
    \left( \bigwedge\nolimits^{n+1} \mathbb{C}^{n+1} \right) \otimes S^{-m-n-1}( \mathbb{C}^{n+1} )
\]
and its character is equal to $\displaystyle t_1 \dots t_{n+1} \sum_{\substack{r_1 + \dots + r_{n+1} = -m-n-1 \\ r_1,\, \dots ,\, r_{n+1} \geq 0}} t_1^{r_1} \dots t_{n+1}^{r_{n+1}}$.
\end{proof}

Finally, let us verify Claim \ref{transition} and \ref{gluing}.

\begin{proof}[Proof of Claim \ref{transition}]  According to general argument, we have
\[
  d\zeta_1^{(i)} \wedge \dots \wedge d\zeta_n^{(i)} = 
    \text{det} \frac{\partial \zeta^{(i)}}{\partial \zeta^{(j)}} \, d\zeta_1^{(j)} \wedge \dots \wedge d\zeta_n^{(j)} ,
\]
where $\text{det} \frac{\partial \zeta^{(i)}}{\partial \zeta^{(j)}}$ denotes the determinant of the Jacobian matrix.
Thus it is sufficient to see that
\[
  \text{det} \frac{\partial \zeta^{(i)}}{\partial \zeta^{(j)}} = 
    (-1)^{i+j}\frac{z_j^{n+1}}{z_i^{n+1}} \quad \text{on} \quad U_i \cap U_j.
\]
We can assume that $i < j$ without loss of generality. Since 
\begin{align*}
&(\zeta_1^{(i)},\, \dots ,\, \zeta_{i-1}^{(i)},\, \zeta_{i}^{(i)},\, \dots ,\, \zeta_{j-2}^{(i)},\, \zeta_{j-1}^{(i)},\, \zeta_{j}^{(i)},\, \dots ,\, \zeta_{n}^{(i)}) \\
&\qquad \qquad \qquad \qquad \qquad \qquad \qquad = \left( \frac{\zeta_1^{(j)}}{\zeta_i^{(j)}} ,\, \dots ,\, \frac{\zeta_{i-1}^{(j)}}{\zeta_i^{(j)}} ,\, \frac{\zeta_{i+1}^{(j)}}{\zeta_i^{(j)}} ,\, \dots ,\, \frac{\zeta_{j-1}^{(j)}}{\zeta_i^{(j)}} ,\, \frac{1}{\zeta_i^{(j)}} ,\, \frac{\zeta_j^{(j)}}{\zeta_i^{(j)}} ,\, \dots ,\, \frac{\zeta_n^{(j)}}{\zeta_i^{(j)}} \right)
\end{align*}
on the intersection $U_i \cap U_J$, from straightforward calculation, we have the Jacobian matrix as follows:

\[ \left( \setlength{\extrarowheight}{8pt}
\begin{array}{cccc|c|cccc|cccc}
\frac{1}{\zeta_i^{(j)}} & 0 & \dots & 0 & -\frac{\zeta_1^{(j)}}{(\zeta_i^{(j)})^2} & 0 & \hdotsfor{2} & 0 & 0 & \hdotsfor{2} & 0 \\
0 & \frac{1}{\zeta_i^{(j)}} & \ddots & \vdots & -\frac{\zeta_2^{(j)}}{( \zeta_i^{(j)} )^2} & \vdots & \ddots & & \vdots & \vdots & \ddots & & \vdots \\
\vdots & \ddots & \ddots & 0 & \vdots & \vdots & & \ddots & \vdots & \vdots & & \ddots &\vdots \\
0 & \dots & 0 & \frac{1}{\zeta_i^{(j)}} & -\frac{\zeta_{i-1}^{(j)}}{( \zeta_i^{(j)} )^2} & 0 & \hdotsfor{2} & 0 & 0 & 0 & \dots & 0 \\[8pt] \hline
0 & \hdotsfor{2} & 0 & -\frac{\zeta_{i+1}^{(j)}}{( \zeta_i^{(j)} )^2} & \frac{1}{\zeta_i^{(j)}} & 0 & \dots & 0 & 0 & \hdotsfor{2} & 0 \\
\vdots & \ddots & & \vdots & \vdots & 0 & \frac{1}{\zeta_i^{(j)}} & \ddots & \vdots & \vdots & \ddots & & \vdots \\
\vdots & & \ddots & \vdots & \vdots & \vdots & \ddots & \ddots & 0 & \vdots & & \ddots & \vdots \\
0 & \hdotsfor{2} & 0 & -\frac{\zeta_{j-1}^{(j)}}{( \zeta_i^{(j)} )^2} & 0 & \dots & 0 & \frac{1}{\zeta_i^{(j)}} & 0 & \hdotsfor{2} & 0 \\[8pt] \hline
0 & \hdotsfor{2} & 0 & -\frac{1}{( \zeta_i^{(j)} )^2} & 0 & \hdotsfor{2} & 0 & 0 & \hdotsfor{2} & 0 \\[8pt] \hline
0 & \hdotsfor{2} & 0 & -\frac{\zeta_{j}^{(j)}}{( \zeta_i^{(j)} )^2} & 0 & \hdotsfor{2} & 0 & \frac{1}{\zeta_i^{(j)}} & 0 & \dots & 0 \\
\vdots & \ddots & & \vdots & \vdots & \vdots & \ddots & & \vdots & 0 & \frac{1}{\zeta_i^{(j)}} & \ddots & \vdots \\
\vdots & & \ddots & \vdots & \vdots & \vdots & & \ddots & \vdots & \vdots & \ddots & \ddots & 0 \\
0 & \hdotsfor{2} & 0 & -\frac{\zeta_{n}^{(j)}}{( \zeta_i^{(j)} )^2} & 0 & \hdotsfor{2} & 0 & 0 & \dots & 0 & \frac{1}{\zeta_i^{(j)}}
\end{array}
\right) \]
Thus the determinant of the Jacobian matrix is equal to $(-1)^{i+j}\frac{z_j^{n+1}}{z_i^{n+1}}$.
\end{proof}

\begin{proof}[Proof of Claim \ref{gluing}]
It is sufficient to check that 
\[
  \left( \omega_{r_1,\, \dots ,\, r_{n+1}}^i (\bm{l}) \right) ( v^{(1)} \otimes \dots \otimes v^{(-m)} ) = 
    \left( \omega_{r_1,\, \dots ,\, r_{n+1}}^j (\bm{l}) \right) ( v^{(1)} \otimes \dots \otimes v^{(-m)} ) 
      \quad \in K_n|_{\bm{l}}
\]
for any $\bm{l} = [z_1 : \dots : z_{n+1}] \in U_i \cap U_j$, where $v^{(k)} \in \bm{l} \subset \mathbb{C}^{n+1}$ $( 1 \leq k \leq -m)$. 
By definition and Claim \ref{transition},
\begin{align*}
  & \quad \left( \omega_{r_1,\, \dots ,\, r_{n+1}}^i (\bm{l}) \right) ( v^{(1)} \otimes \dots \otimes v^{(-m)} ) \\[5pt]
  &= (-1)^{i+1} (d\zeta_1^{(i)})_{\bm{l}} \wedge \dots \wedge (d\zeta_n^{(i)})_{\bm{l}} \otimes 
    (s_i^{n+1} s_1^{r_1} \dots s_{n+1}^{r_{n+1}})(v^{(1)} \otimes \dots \otimes v^{(-m)}) \\[5pt]
  &= (-1)^{j+1} \left( \frac{z_j}{z_i} \right)^{n+1} (d\zeta_1^{(j)})_{\bm{l}} \wedge \dots \wedge (d\zeta_n^{(j)})_{\bm{l}} 
    \otimes (s_i^{n+1} s_1^{r_1} \dots s_{n+1}^{r_{n+1}})(v^{(1)} \otimes \dots \otimes v^{(-m)}).
\end{align*}
Moreover since $v^{(k)} \in \bm{l} \subset \mathbb{C}^{n+1}$ for each $k \in \{ 1,\, \dots ,\, -m \}$, we have $\frac{v_j^{(k)}}{v_i^{(k)}} = \frac{z_j}{z_i}$ where $v^{(k)} = (v_1^{(k)},\, \dots ,\, v_{n+1}^{(k)}) \in \mathbb{C}^{n+1}$. Using these relations, we find that
\[
  \left( \frac{z_j}{z_i} \right)^{n+1} (s_i^{n+1} s_1^{r_1} \dots s_{n+1}^{r_{n+1}})(v^{(1)} \otimes \dots \otimes v^{(-m)}) 
    = (s_j^{n+1} s_1^{r_1} \dots s_{n+1}^{r_{n+1}})(v^{(1)} \otimes \dots \otimes v^{(-m)}) \quad \in \mathbb{C}.
\]
Thus we obtain the required equation.
\end{proof}

\subsection{The transversal index of the lifted Dolbeault operator} \label{formula}

In this section, we give a proof of our main theorem. Firstly, let us prepare some notations.

\begin{dfn} \label{J}
For $a \in \mathbb{C} \setminus \{ 0 \}$, let us put
\begin{itemize}
  \item $\displaystyle \left[ \frac{1}{1-az} \right]^{z = 0} := 1 + az + a^2z^2 + a^3z^3 + \dotsb$ ,
  \item $\displaystyle \left[ \frac{1}{1-az} \right]^{z = \infty} := 
             -\left( a^{-1}z^{-1} + a^{-2}z^{-2} + a^{-3}z^{-3} + \dotsb \right)$ .
\end{itemize}
These are the Laurent expansions of $\displaystyle \frac{1}{1-az}$ at $z = 0$ and $\infty$ respectively. More generally, for any meromorphic function $\phi(z)$ on Riemann sphere, we denote the Laurent expansions of $\phi$ at $z=0$ and $z=\infty$ by $[\phi]^{z=0}$ and $[\phi]^{z=\infty}$ respectively.

Let $n \geq 0$.  We denote 
\[
  \lambda_{n+1}(k) := \prod_{j=1}^{n+1} (1-t_j^kt)  \quad \in R(T^{n+1} \times S^1).
\]
\begin{align*}
  J_0(\lambda_{n+1}(k)) &:= \left[ \frac{1}{\lambda_{n+1}(k)} \right]^{t = 0} = 
    \prod_{j=1}^{n+1} \left[ \frac{1}{1-t_j^kt} \right]^{t = 0} = 
      \prod_{j=1}^{n+1} \left( 1 + t_j^kt + t_j^{2k}t^2 + t_j^{3k}t^3 + \dotsb \right) , \\[5pt]
  J_{\infty}(\lambda_{n+1}(k)) &:= \left[ \frac{1}{\lambda_{n+1}(k)} \right]^{t = \infty} = 
    \prod_{j=1}^{n+1} \left[ \frac{1}{1-t_j^kt} \right]^{t = \infty} = 
      (-1)^{n+1} \prod_{j=1}^{n+1} \left( t_j^{-k}t^{-1} + t_j^{-2k}t^{-2} + \dotsb \right) , \\[5pt]
  J(\lambda_{n+1}(k)) &:= J_0(\lambda_{n+1}(k)) - J_{\infty}(\lambda_{n+1}(k)) = 
    \prod_{j=1}^{n+1} \left[ \frac{1}{1-t_j^kt} \right]^{t = 0} - \prod_{j=1}^{n+1} \left[ \frac{1}{1-t_j^kt} \right]^{t = \infty} .
\end{align*}
\end{dfn}

\begin{rem} \label{lamandJ}
\begin{enumerate}
  \item $\lambda_{n+1}(k)$ are regarded as the characters of the exterior algebras of $\mathbb{C}^{n+1}$ provided with the action of $T^{n+1} \times S^1$ whose weight is $((\underbrace{k,\, \dots ,\, k}_{n+1}),\, 1)$.

  \item $J_0(\lambda_{n+1}(k))$ and $J_{\infty}(\lambda_{n+1}(k))$ are the Fourier expansions of distributions on $T^{n+1} \times S^1$ which can be regarded as linear maps from $\mathcal{D}(S^1)$ to $\mathcal{D}(T^{n+1})$ since the coefficients of $t^m$ are Laurent polynomials in variables $t_1^k, \, \dots ,\, t_{n+1}^k$ (see also Remark \ref{spform}).

  \item Since $J_0(\lambda_{n+1}(k))$ and $J_{\infty}(\lambda_{n+1}(k))$ are the Laurent expansions of $\lambda_{n+1}(k)^{-1}$ at $t=0$ and $t=\infty$ respectively, $\lambda_{n+1}(k) J_0(\lambda_{n+1}(k)) = \lambda_{n+1}(k) J_{\infty}(\lambda_{n+1}(k)) =1$. \label{J}
In particular, we have $\lambda_{n+1}(k) J(\lambda_{n+1}(k)) = 0$.

  \item We sometimes denote $J_0(\lambda_{n+1}(k))$ by $J_0(\lambda_{n+1}(k))(t_1, \dots ,\, t_{n+1} ,\, t)$ to emphasize that it is the Fourier expansion with respect to a coordinate $(t_1, \, \dots ,\, t_{n+1}, \, t)$ of $T^{n+1} \times S^1$. The similar notation is used for $J_{\infty}(\lambda_{n+1}(k))$.
\end{enumerate}
\end{rem}

Let us see the following fundamental relations of $\chi_{n,\,l}$.
\begin{lem} \label{shift}
For any integer $n \geq 1$ , we have
\begin{enumerate}
  \item \label{oneshift} $\chi_{n,\, l} = \chi_{n-1,\, l} + \chi_{n,\, l-1} \, t_{n+1}^{-1} \quad (l \in \mathbb{Z})$ ,
  \item \label{mshift} $\chi_{n,\, l} = \displaystyle \sum_{i=0}^{j-1} \chi_{n-1,\, l-i} \, t_{n+1}^{-i} + 
                                \chi_{n,\, l-j} \, t_{n+1}^{-j} \quad (l \in \mathbb{Z} \, , \, j \geq 1)$ ,
  \item \label{pshift} $\chi_{n,\, l} = \displaystyle \sum_{i=0}^l \chi_{n-1,\, l-i} \, t_{n+1}^{-i} \quad (l \geq 0)$ ,
  \item \label{nshift} $\chi_{n,\, l} = -\displaystyle \sum_{i=1}^{-l-1} \chi_{n-1, \, l+i} \, t_{n+1}^i \quad (l \leq -2)$ .
\end{enumerate}
\end{lem}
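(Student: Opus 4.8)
The plan is to prove the single recurrence \eqref{oneshift}, i.e. item (1), first, and then to obtain (2), (3) and (4) from it by purely formal manipulations, using only the vanishing $\chi_{n,\, -1} = 0$, which holds for $n \geq 1$ since then $-n-1 < -1 < 0$ and Corollary \ref{chicharacter} applies in the middle regime. Thus the one place where Corollary \ref{chicharacter} genuinely enters is the proof of (1); the rest is bookkeeping.

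For (1), I would package all the cases of Corollary \ref{chicharacter} at once through a generating function. Set
\[
  F_n(z) := \prod_{j=1}^{n+1} \frac{1}{1 - t_j^{-1} z} ,
\]
a rational function of $z$, and take its Laurent expansions $[F_n]^{z=0}$ and $[F_n]^{z=\infty}$ in the sense of Definition \ref{J}. The claim is that Corollary \ref{chicharacter} is exactly the statement
\[
  \chi_{n,\, l} = [z^l] \left( [F_n]^{z=0} - [F_n]^{z=\infty} \right) \qquad (l \in \mathbb{Z}) .
\]
Indeed, $[F_n]^{z=0} = \sum_{l \geq 0} h_l(t_1^{-1},\dots,t_{n+1}^{-1}) z^l$ reproduces the $l \geq 0$ values, where $h_p(x_1,\dots,x_r)$ denotes the complete homogeneous symmetric polynomial of degree $p$; the factor-by-factor expansion at $z=\infty$ gives $[F_n]^{z=\infty} = (-1)^{n+1} t_1 \cdots t_{n+1} \sum_{p \geq 0} h_p(t_1,\dots,t_{n+1}) z^{-n-1-p}$, whose $z^l$-coefficient is $-\chi_{n,\, l}$ for $l \leq -n-1$; and both expansions contribute nothing in the gap $-n \leq l \leq -1$. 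Granting this identification, (1) is immediate from the factorization
\[
  F_{n-1}(z) = \left( 1 - t_{n+1}^{-1} z \right) F_n(z) :
\]
multiplication by the polynomial $1 - t_{n+1}^{-1}z$ commutes with taking either Laurent expansion, so comparing $z^l$-coefficients of $[F_{n-1}]^{z=0} - [F_{n-1}]^{z=\infty}$ yields $\chi_{n-1,\, l} = \chi_{n,\, l} - t_{n+1}^{-1} \chi_{n,\, l-1}$, which rearranges to (1). (Alternatively one may verify (1) directly from Corollary \ref{chicharacter} by splitting into $l \geq 1$, $l = 0$, $-n \leq l \leq -1$, and $l \leq -n-1$, reducing the outer ranges to the standard identity $h_p(x_1,\dots,x_{n+1}) = h_p(x_1,\dots,x_n) + x_{n+1} h_{p-1}(x_1,\dots,x_{n+1})$.)

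Once (1) is available, (2) follows by induction on $j$: the base case $j=1$ is (1) itself, and for the inductive step I would apply (1) to the tail term $\chi_{n,\, l-j}$ and reindex. For (3), with $l \geq 0$ I would set $j = l+1$ in (2); the tail term becomes $\chi_{n,\, -1}\, t_{n+1}^{-(l+1)} = 0$, leaving precisely $\sum_{i=0}^{l} \chi_{n-1,\, l-i}\, t_{n+1}^{-i}$. For (4) I would instead run (1) backwards, rewriting it as $\chi_{n,\, l} = t_{n+1} \chi_{n,\, l+1} - t_{n+1} \chi_{n-1,\, l+1}$ and iterating $-l-1$ times (legitimate for $l \leq -2$) to reach
\[
  \chi_{n,\, l} = t_{n+1}^{-l-1} \chi_{n,\, -1} - \sum_{i=1}^{-l-1} t_{n+1}^{i} \chi_{n-1,\, l+i} ,
\]
whereupon $\chi_{n,\, -1} = 0$ removes the first term and gives (4).

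The main obstacle is the proof of (1), since everything else is formal. In the generating-function route the work lies in checking that the piecewise formula of Corollary \ref{chicharacter} really is the coefficient sequence of $[F_n]^{z=0} - [F_n]^{z=\infty}$ — in particular getting the sign right in the $l \leq -n-1$ range (where the $(-1)^{n+1}$ from the $n+1$ factors must combine with the minus sign in the difference to produce $(-1)^n$) and confirming the cancellation in the gap. In the direct route the delicate points are instead the gap region $-n \leq l \leq -1$ and especially the boundary value $l = -n$, where the two negative-regime contributions $\chi_{n-1,\, -n}$ and $t_{n+1}^{-1}\chi_{n,\, -n-1}$ must cancel.
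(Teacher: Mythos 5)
Your proposal is correct and follows essentially the same route as the paper: item (1) is verified against Corollary \ref{chicharacter}, and items (2), (3), (4) are then obtained by iterating (1) together with the vanishing $\chi_{n,\,-1}=0$ (your derivation of (4) by reversing and iterating (1) is equivalent to the paper's specialization of (2) at $l=-1$, $j=-l-1$). The only cosmetic difference is that you package the case-by-case check of (1) as the coefficient identity $\chi_{n,\,l}=[z^l]\left([F_n]^{z=0}-[F_n]^{z=\infty}\right)$ together with the factorization $F_{n-1}(z)=(1-t_{n+1}^{-1}z)F_n(z)$, whereas the paper simply asserts that (1) follows immediately from Corollary \ref{chicharacter}; both amount to the same verification.
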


\begin{proof}
It is easy to prove them from Corollary \ref{chicharacter}. Indeed, (\ref{oneshift}) immediately follows from Corollary \ref{chicharacter}. (\ref{mshift}) is verified by using (\ref{oneshift}) repeatedly.
In addition, if we set $j=l+1$ in (\ref{mshift}), then we get (\ref{pshift}) because $\chi_{n,\, -1} = 0$.
Let $m \leq -2$. Setting $l = -1,\, j = -m-1$ in (\ref{pshift}), then
\[
  \chi_{n,\,-1} = \sum_{i=0}^{-m-2} \chi_{n-1,\, -1-i} \, t_{n+1}^{-i} + \chi_{n,\, m} \, t_{n+1}^{m+1}.
\]
Multiplying the both side by $t_{n+1}^{-m-1}$,
\[
  \chi_{n,\, m} = -\sum_{i=0}^{-m-2} \chi_{n-1,\, -1-i} \, t_{n+1}^{-i-1-m} = -\sum_{j=1}^{-m-1} \chi_{n,\, m+j} \, t_{n+1}^j.
\]
Thus we obtain (\ref{nshift}).
\end{proof}

\begin{lem} \label{recurrence}
We assume that $n \geq 1 ,\, k < 0$. Then,
\begin{enumerate}
  \item \label{prec}
    $\begin{aligned}[t]
      &\quad \sum_{m\geq 0} \chi_{n, \, -km} \, t^m \\
      &=\sum_{m \geq 0} \chi_{n-1,\, -km} \, t^m + 
         \left( \sum_{j=0}^{|k|-1} \left( \sum_{m\geq 0} \chi_{n-1, \, -km+j} \, t^m \right) t_{n+1}^{k+j}t \right) 
           \left[ \frac{1}{1-t_{n+1}^kt} \right]^{t = 0} \\[10pt]
      &= \left( (1-t_{n+1}^kt) \sum_{m \geq 0} \chi_{n-1, \, -km} t^m  + 
         \sum_{j=0}^{|k|-1} \left( \sum_{m\geq 0} \chi_{n-1, \, -km+j} \, t^m \right) t_{n+1}^{k+j}t \right) 
           \left[ \frac{1}{1-t_{n+1}^kt} \right]^{t = 0} \\[10pt]
      &= \left\{ 
        \begin{aligned} 
          &\left( \sum_{m \geq 0} \chi_{n-1, \, -km} t^m + 
            \sum_{j=1}^{|k|-1} \left( \sum_{m \geq 0} \chi_{n-1, \, -km+j} \, t^m \right) t_{n+1}^{k+j}t \right) 
            \left[ \frac{1}{1-t_{n+1}^kt} \right]^{t = 0} &\, &(k \leq -2) \, ,\\
          & \\
          & \left( \sum_{m \geq 0} \chi_{n-1, \, -km} t^m \right) \left[ \frac{1}{1-t_{n+1}^kt} \right]^{t = 0} &\, &(k = -1).
        \end{aligned} \right.
\end{aligned}$
\vspace{5mm}
  \item \label{nrec}
    $\begin{aligned}[t] 
      &\quad \sum_{m < 0} \chi_{n, \, -km} \, t^m \\
      &=\sum_{m < 0} \chi_{n-1,\, -km} \, t^m + 
           \left( \sum_{j=0}^{|k|-1} \left( \sum_{m < 0} \chi_{n-1, \, -km+j} \, t^m \right) t_{n+1}^{k+j}t \right) 
             \left[ \frac{1}{1-t_{n+1}^kt} \right]^{t = \infty} \\[10pt]
      &= \left( (1-t_{n+1}^kt) \sum_{m < 0} \chi_{n-1, \, -km} t^m  + 
           \sum_{j=0}^{|k|-1} \left( \sum_{m < 0} \chi_{n-1, \, -km+j} \, t^m \right) t_{n+1}^{k+j}t \right) 
             \left[ \frac{1}{1-t_{n+1}^kt} \right]^{t = \infty} \\[10pt]
      &= \left\{ 
            \begin{aligned} &\left( \sum_{m < 0} \chi_{n-1, \, -km} t^m + 
                   \sum_{j=1}^{|k|-1} \left( \sum_{m < 0} \chi_{n-1, \, -km+j} \, t^m \right) t_{n+1}^{k+j}t \right) 
                     \left[ \frac{1}{1-t_{n+1}^kt} \right]^{t = \infty} &\, &(k \leq -2) \, , \\
                  & \\
                  & \left( \sum_{m < 0} \chi_{n-1, \, -km} t^m \right) \left[ \frac{1}{1-t_{n+1}^kt} \right]^{t = \infty} &\, &(k = -1).
            \end{aligned} \right.
\end{aligned}$
\end{enumerate}
\end{lem}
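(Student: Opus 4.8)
The plan is to reduce both parts to a single coefficient-matching identity governed by Lemma \ref{shift}, handling the two parts in parallel and isolating the one place where they genuinely differ.

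First I would observe that within each part the three displayed expressions coincide by purely formal manipulations, so that only the first equality carries any content. Indeed, the second expression arises from the first by inserting the factor $(1-t_{n+1}^kt)$ in front of $\sum_{m\ge0}\chi_{n-1,-km}t^m$, which is harmless because $(1-t_{n+1}^kt)\left[\frac{1}{1-t_{n+1}^kt}\right]^{t=0}=1$ (Remark \ref{lamandJ}); and the third expression, namely the case split, results from extracting the $j=0$ summand, whose product with the expansion combines with the $(1-t_{n+1}^kt)$ factor to leave exactly $\sum_{m\ge0}\chi_{n-1,-km}t^m$, yielding the two cases $k\le-2$ and $k=-1$ according to whether the residual sum $\sum_{j=1}^{|k|-1}$ is nonempty. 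The same remarks apply verbatim to part (2) with $\left[\frac{1}{1-t_{n+1}^kt}\right]^{t=\infty}$ in place of the expansion at $t=0$.

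Next, to prove the first equality of (1), I would multiply through by $(1-t_{n+1}^kt)$ --- legitimate since that element is a unit in the ring of formal power series in $t$ over $R(T^{n+1})$, with inverse $\left[\frac{1}{1-t_{n+1}^kt}\right]^{t=0}$ --- and then compare coefficients of $t^m$ in
\[
  (1-t_{n+1}^kt)\sum_{m\ge0}\chi_{n,-km}t^m = (1-t_{n+1}^kt)\sum_{m\ge0}\chi_{n-1,-km}t^m + \sum_{j=0}^{|k|-1}\Big(\sum_{m\ge0}\chi_{n-1,-km+j}t^m\Big)t_{n+1}^{k+j}t.
\]
For $m\ge1$ the coefficient on the left is $\chi_{n,-km}-t_{n+1}^k\chi_{n,-k(m-1)}$, which by Lemma \ref{shift}(\ref{mshift}) with $j=|k|$ (using $-km-|k|=-k(m-1)$ and $t_{n+1}^{-|k|}=t_{n+1}^{k}$) equals $\sum_{i=0}^{|k|-1}\chi_{n-1,-km-i}t_{n+1}^{-i}$. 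On the right the $j=0$ term cancels the summand $-t_{n+1}^k\chi_{n-1,-k(m-1)}$, and the substitution $i=|k|-j$ (so that $t_{n+1}^{k+j}=t_{n+1}^{-i}$ and $-k(m-1)+j=-km-i$) identifies the remaining sum with the left-hand side. For $m=0$ the identity collapses to $\chi_{n,0}=\chi_{n-1,0}$, which is Lemma \ref{shift}(\ref{pshift}) at $l=0$.

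Finally, part (2) is proved by the identical coefficient comparison for all $m\le-1$, and this is where I expect the only real obstacle to lie. Multiplying $\sum_{m<0}\chi_{n,-km}t^m$ by $(1-t_{n+1}^kt)$ now creates a spurious $t^0$ term coming from the top index $m=-1$, and matching it forces the relation $\chi_{n,k}=-\sum_{j=1}^{|k|-1}\chi_{n-1,k+j}t_{n+1}^{j}$, which is precisely Lemma \ref{shift}(\ref{nshift}) at $l=k\le-2$ (and is vacuous, since $\chi_{n,-1}=0$, when $k=-1$). Thus the genuine difficulty is not the bulk recursion but the bookkeeping: reconciling the descending indices $-km-i$ produced by Lemma \ref{shift}(\ref{mshift}) with the ascending indices $-km+j$ appearing in the statement, and correctly absorbing the single boundary coefficient at $t^0$ into the appropriate case of Lemma \ref{shift}. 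The choice of the Laurent expansion at $t=\infty$ for part (2) simply reflects that $\sum_{m<0}(\cdots)t^m$ is a series in negative powers of $t$, so it is the expansion at $\infty$ that inverts $(1-t_{n+1}^kt)$ in the relevant ring.
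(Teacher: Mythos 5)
Your proposal is correct, but it proves the lemma by a different mechanism than the paper does. The paper's proof is a forward derivation: it fully unrolls $\chi_{n,\,-km}$ via Lemma \ref{shift} (\ref{pshift}), reorganizes the resulting double sum $\sum_{m}\sum_{i=0}^{|k|m}$ by the residue class of $i$ modulo $|k|$, and then recognizes the geometric series $\sum_{\alpha\ge0}(t_{n+1}^{k}t)^{\alpha}=\left[\frac{1}{1-t_{n+1}^{k}t}\right]^{t=0}$; part (\ref{nrec}) is dispatched with ``similarly, using (\ref{nshift}).'' You instead clear the denominator by the unit $(1-t_{n+1}^{k}t)$ and verify the resulting polynomial identity coefficient by coefficient, invoking Lemma \ref{shift} (\ref{mshift}) with step $j=|k|$ exactly once per coefficient. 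Both arguments rest on Lemma \ref{shift}, and your coefficient computations check out: the identity $\chi_{n,\,-km}-t_{n+1}^{k}\chi_{n,\,-k(m-1)}=\sum_{i=0}^{|k|-1}\chi_{n-1,\,-km-i}\,t_{n+1}^{-i}$ is indeed (\ref{mshift}) at $l=-km$, $j=|k|$, the reindexing $i=|k|-j$ matches it to the right-hand side, and the boundary coefficients ($\chi_{n,0}=\chi_{n-1,0}$ at $t^{0}$ in part (1), and $\chi_{n,k}=-\sum_{j=1}^{|k|-1}\chi_{n-1,\,k+j}t_{n+1}^{j}$, i.e.\ (\ref{nshift}) at $l=k$, at $t^{0}$ in part (2)) are exactly right. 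What your route buys is uniformity --- parts (1) and (2) become the same computation except for one explicitly identified boundary term, whereas the paper must rerun the whole resummation with (\ref{nshift}) in place of (\ref{pshift}) --- at the mild cost of being a verification rather than a derivation, so one must already know the answer. The one point to state carefully (which you do address at the end) is that for part (2) the inversion of $(1-t_{n+1}^{k}t)$ takes place in the ring of formal series bounded above in powers of $t$, where its inverse is the expansion at $t=\infty$.
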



\begin{proof}
Let us verify (\ref{prec}). Set $k = -k_0$. From (\ref{pshift}) in Lemma \ref{shift},
\begin{align*}
 \sum_{m \geq 0} \chi_{n, \, k_0m} \, t^m
 &= \sum_{m\geq 0}
 \left( \sum_{i=0}^{k_0m} \chi_{n-1, \, k_0m-i} \, t_{n+1}^{-i} \right)t^m \\
 &= \sum_{m \geq 0} \chi_{n-1,\, k_0m} t^m + \sum_{m \geq 1} \left( \sum_{i=1}^{k_0m} \chi_{n-1, \, k_0m-i} \, t_{n+1}^{-i} \right)t^m \\
 &= \sum_{m \geq 0} \chi_{n-1, \, k_0m} \, t^m + \sum_{m\geq 1} \left( \sum_{j=0}^{k_0-1} \sum_{\substack{i \equiv j \, mod \, k_0 \\ 1 \leq i \leq k_0m}} \chi_{n-1, \, k_0m-i} \, t_{n+1}^{-i} \right) t^m \\
 &= \sum_{m \geq 0} \chi_{n-1, \, k_0m} \, t^m + \sum_{m\geq 1} \left( \sum_{j=0}^{k_0-1} \sum_{\alpha = 1}^m\chi_{n-1,\, k_0m-(k_0\alpha -j)} \, t_{n+1}^{-(k_0\alpha -j)} \right) t^m \\
 &= \sum_{m \geq 0} \chi_{n-1, \, k_0m} \, t^m + \sum_{j=0}^{k_0-1} \left( \sum_{m\geq 1} \sum_{\alpha =1}^m \chi_{n-1, \, k_0(m-\alpha) + j} \, t^{m-\alpha} \, (t_{n+1}^{-k_0} t)^{\alpha -1} \right)  t_{n+1}^{-k_0+j} t \\
 &= \sum_{m \geq 0} \chi_{n-1, \, k_0m} \, t^m + \sum_{j=0}^{k_0-1} \left( \left( \sum_{l\geq 0} \chi_{n-1, \, k_0l + j}\, t^l \right) \left( \sum_{\alpha \geq 0} \, (t_{n+1}^{-k_0} t)^{\alpha} \right) \right) t_{n+1}^{-k_0+j}t. \\
 &= \sum_{m \geq 0} \chi_{n-1, \, k_0m} \, t^m + \left( \sum_{j=0}^{k_0-1} \left( \sum_{l\geq 0} \chi_{n-1, \, k_0l + j}\, t^l \right) t_{n+1}^{-k_0+j}t \right) \left( \sum_{\alpha \geq 0} \, (t_{n+1}^{-k_0} t)^{\alpha} \right) \\
 &= \sum_{m \geq 0} \chi_{n-1, \, k_0m} \, t^m + \left( \sum_{j=0}^{k_0-1} \left( \sum_{l \geq 0} \chi_{n-1, \, k_0l + j}\, t^l \right) t_{n+1}^{-k_0+j}t \right) \left[ \frac{1}{1-t_{n+1}^{-k_0}t} \right]^{t = 0}
\end{align*}
 
In addition, since $\displaystyle 1 = (1 - t_{n+1}^{-k_0}t) \left[ \frac{1}{1-t_{n+1}^{-k_0}t} \right]^{t = 0}$, we have
\begin{align*}
  \sum_{m \geq 0} \chi_{n, \, k_0m} \, t^m
  &= \sum_{m \geq 0} \chi_{n-1, \, k_0m} \, t^m + \left( \sum_{j=0}^{k_0-1} \left( \sum_{l \geq 0} \chi_{n-1, \, k_0l + j}\, t^l \right) t_{n+1}^{-k_0+j}t \right) \left[ \frac{1}{1-t_{n+1}^{-k_0}t} \right]^{t = 0} \\
  &= \left( (1 - t_{n+1}^{-k_0}t) \sum_{m \geq 0} \chi_{n-1, \, k_0m} \, t^m + \sum_{j=0}^{k_0-1} \left( \sum_{l \geq 0} \chi_{n-1, \, k_0l + j}\, t^l \right) t_{n+1}^{-k_0+j}t \right)  \left[ \frac{1}{1-t_{n+1}^{-k_0}t} \right]^{t = 0}.
\end{align*}

Similarly, we can prove (\ref{nrec}) by using (\ref{nshift}) in Lemma \ref{shift}.
\end{proof}

\begin{lem} \label{A}
For $n \geq 0$, \, $k < 0$, \, $0 \leq j \leq |k|-1$, there exists a unique function $A_{n,\, k}^j$ on $T^{n+1} \times S^1$ such that 
\begin{enumerate}
  \item \label{A+} $\displaystyle \sum_{m \geq 0} \chi_{n,\, -km+j} \, t^m = A_{n,\, k}^j \, J_0(\lambda_{n+1}(k))$ ,
  \item \label{A-} $\displaystyle \sum_{m < 0} \chi_{n,\, -km+j} \, t^m = -A_{n,\, k}^j \, J_{\infty}(\lambda_{n+1}(k))$ .
\end{enumerate}
\end{lem}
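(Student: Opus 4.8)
The plan is to dispose of uniqueness first and then establish existence by induction on $n$, proving both assertions (\ref{A+}) and (\ref{A-}) simultaneously with a single function $A_{n,k}^j$. For \textbf{uniqueness}, suppose $A_{n,k}^j$ is any function satisfying (\ref{A+}). Multiplying both sides by $\lambda_{n+1}(k)$ and using $\lambda_{n+1}(k)\,J_0(\lambda_{n+1}(k)) = 1$ from Remark \ref{lamandJ}, we obtain
\[
  A_{n,k}^j = \lambda_{n+1}(k)\sum_{m\geq 0}\chi_{n,\,-km+j}\,t^m ,
\]
so $A_{n,k}^j$ is forced; this also pinpoints exactly the object we must show is a genuine (finite) Laurent polynomial. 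The same computation applied to (\ref{A-}) with $\lambda_{n+1}(k)\,J_\infty(\lambda_{n+1}(k)) = 1$ gives $A_{n,k}^j = -\lambda_{n+1}(k)\sum_{m<0}\chi_{n,\,-km+j}\,t^m$, so the two requirements are consistent precisely when $\lambda_{n+1}(k)\sum_{m\in\mathbb{Z}}\chi_{n,\,-km+j}\,t^m = 0$, in harmony with $\lambda_{n+1}(k)\,J(\lambda_{n+1}(k)) = 0$ from Remark \ref{lamandJ}.

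For the \textbf{base case} $n=0$ I would use $\chi_{0,\,l} = t_1^{-l}$, so that the geometric-series expansions defining $J_0(\lambda_1(k))$ and $J_\infty(\lambda_1(k))$ give
\[
  \sum_{m\geq 0}\chi_{0,\,-km+j}\,t^m = t_1^{-j}\sum_{m\geq 0}(t_1^k t)^m = t_1^{-j}\,J_0(\lambda_1(k)),
\]
and likewise $\sum_{m<0}\chi_{0,\,-km+j}\,t^m = -t_1^{-j}\,J_\infty(\lambda_1(k))$. Hence $A_{0,k}^j = t_1^{-j}$ works and is a function on $T^1\times S^1$.

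For the \textbf{inductive step} I would first upgrade Lemma \ref{recurrence}, which treats the shift $j=0$, to an arbitrary shift $0\le j\le |k|-1$. The derivation is identical to that of Lemma \ref{recurrence}: apply (\ref{pshift}) of Lemma \ref{shift} to each $\chi_{n,\,-km+j}$ (and (\ref{nshift}) for the $m<0$ series), reindex the resulting double sum by residues modulo $|k|$, and collect the geometric factor $\left[\frac{1}{1-t_{n+1}^k t}\right]^{t=0}$. This expresses $(1-t_{n+1}^k t)\sum_{m\geq 0}\chi_{n,\,-km+j}\,t^m$ as an explicit $R(T^{n+1}\times S^1)$-linear combination of the level-$(n-1)$ generating functions $\sum_{m\geq 0}\chi_{n-1,\,-km+j'}\,t^m$, $0\le j'\le |k|-1$. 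By the inductive hypothesis each of these equals $A_{n-1,k}^{j'}\,J_0(\lambda_n(k))$ with $A_{n-1,k}^{j'}$ a finite Laurent polynomial on $T^n\times S^1$. Substituting and using the factorization $J_0(\lambda_{n+1}(k)) = J_0(\lambda_n(k))\,\left[\frac{1}{1-t_{n+1}^k t}\right]^{t=0}$, which is immediate from the definition of $J_0$, yields
\[
  \sum_{m\geq 0}\chi_{n,\,-km+j}\,t^m = A_{n,k}^j\,J_0(\lambda_{n+1}(k)),
\]
where $A_{n,k}^j$ is the resulting finite Laurent polynomial; this proves (\ref{A+}) and, in particular, that $A_{n,k}^j$ is a function. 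Since parts (\ref{prec}) and (\ref{nrec}) of Lemma \ref{recurrence} carry \emph{identical} coefficients, differing only in $J_0$ versus $J_\infty$ and in the range of $m$, the very same polynomial $A_{n,k}^j$ produces (\ref{A-}) from the $m<0$ series, completing the induction.

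The \textbf{main obstacle} is not any single calculation but the bookkeeping that forces one and the same function to govern both the $t=0$ and the $t=\infty$ expansions. Concretely I must (i) derive the shifted recurrence so that the coefficients are manifestly the same for the two series, and (ii) keep the formal manipulations in the correct rings: the positive series and $J_0$ live in $R(T^{n+1})[[t]]$, where $\lambda_{n+1}(k)\,J_0 = 1$, while the negative series and $J_\infty$ live in $t^{-1}R(T^{n+1})[[t^{-1}]]$, where $\lambda_{n+1}(k)\,J_\infty = 1$, so that multiplying an infinite series by the polynomial $\lambda_{n+1}(k)$ is legitimate in each. The reindexing modulo $|k|$ in the shifted recurrence is the most error-prone step, exactly as in the proof of Lemma \ref{recurrence}.
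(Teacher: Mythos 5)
Your proof is correct, and its overall strategy --- uniqueness from $\lambda_{n+1}(k)\,J_0(\lambda_{n+1}(k))=1$, existence by induction starting from $A_{0,\,k}^j=t_1^{-j}$ --- is the same as the paper's. The one genuine difference lies in how the inductive step is organized. The paper runs a \emph{double} induction, on $n$ and on $j$: for $j=0$ it applies Lemma \ref{recurrence} verbatim to get $A_{n,\,k}^0=A_{n-1,\,k}^0+\sum_{j=1}^{|k|-1}A_{n-1,\,k}^j\,t_{n+1}^{k+j}t$, and for $1\le j\le|k|-1$ it uses only the cheap one-step shift $\chi_{n,\,l}=\chi_{n-1,\,l}+\chi_{n,\,l-1}t_{n+1}^{-1}$ from Lemma \ref{shift}\,(\ref{oneshift}) to descend in $j$ at fixed $n$, yielding $A_{n,\,k}^j=(1-t_{n+1}^kt)A_{n-1,\,k}^{j}+t_{n+1}^{-1}A_{n,\,k}^{j-1}$; it also treats $k=-1$ separately up front. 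You instead propose a \emph{single} induction on $n$, at the cost of first generalizing Lemma \ref{recurrence} to an arbitrary shift $j$; carrying out your reindexing modulo $|k|$ gives the closed form $A_{n,\,k}^j=\sum_{j'\le j}t_{n+1}^{j'-j}A_{n-1,\,k}^{j'}+\sum_{j'>j}t_{n+1}^{j'-j+k}t\,A_{n-1,\,k}^{j'}$, which one checks is exactly what the paper's two recursions unwind to, so the two constructions agree. What each buys: the paper reuses Lemma \ref{recurrence} as stated and keeps the messy mod-$|k|$ bookkeeping confined to the $j=0$ case, at the price of an extra inner induction; your route avoids the inner induction but forces you to redo that bookkeeping (for both the $t=0$ and $t=\infty$ series) in the shifted setting --- which you correctly flag as the error-prone step, and which does go through.
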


\begin{proof}
The uniqueness of $A_{n,\, k}^j$ follows from the equation that $\lambda_{n+1}(k) \, J_{0}(\lambda_{n+1}(k)) = 1$ (see Remark \ref{lamandJ} (\ref{J})).
Let us verify the existence of $A_{n,\, k}^j$.
If $k = -1$ and $j = 0$, by using Lemma \ref{recurrence} repeatedly, we have 
\begin{align*}
\sum_{m \geq 0} \chi_{n,\, m} \, t^m &= \left( \sum_{m \geq 0} \chi_{n-1,\, m} \, t^m \right) \left[ \frac{1}{1-t_{n+1}^{-1}t} \right]^{t = 0} \\
&= \dots = \left( \sum_{m \geq 0} \chi_{0,\, m} \, t^m \right) \left[ \frac{1}{1-t_{2}^{-1}t} \right]^{t = 0} \dotsm \left[ \frac{1}{1-t_{n+1}^{-1}t} \right]^{t = 0}.
\end{align*}
Here, since
\[
\sum_{m \geq 0} \chi_{0,\, m} \, t^m
= \sum_{m \geq 0} t_1^{-m} t^m = \left[ \frac{1}{1-t_{1}^{-1}t} \right]^{t = 0},
\]
we obtain
\begin{align*}
\sum_{m \geq 0} \chi_{n,\, m} \, t^m &= J_0(\lambda_{n+1}(-1)).
\end{align*}
Analogously, we have
\[
\sum_{m < 0} \chi_{n,\, m} \, t^m = -J_{\infty}(\lambda_{n+1}(-1)).
\]
Thus we can find $A_{n,\, -1}^0 = 1$.

Next let $k \leq -2$ be fixed. We will construct $A_{n,\, k}^j$ by the induction on $n$ and $j$.
If $n=0$, it is easy to construct $A_{0,\, k}^j$.
Indeed, since $\chi_{0,\,l} = t_1^{-l}$,
\begin{align*}
  \sum_{m \geq 0} \chi_{0,\, -km+j} \, t^m &= \sum_{m \geq 0} t_1^{-(-km+j)} t^m
    = t_1^{-j} \sum_{m \geq 0} (t_1^{k}t)^m = t_1^{-j} \left[ \frac{1}{1-t_1^{k}t} \right]^{t = 0}
      = t_1^{-j} J_0(\lambda_{1}(k)) ,
\end{align*}
and
\begin{align*}
  \sum_{m < 0} \chi_{0,\, -km+j} \, t^m &= \sum_{m < 0} t_1^{-(-km+j)} t^m 
    = t_1^{-j} \sum_{m < 0} (t_1^{k}t)^m = -t_1^{-j} \left[ \frac{1}{1-t_1^{k}t} \right]^{t = \infty} 
      = t_1^{-j} J_{\infty}(\lambda_{1}(k)).
\end{align*}
Thus we find $A_{0,\, k}^j = t_1^{-j}$ for $0 \leq j \leq |k|-1$.
Let $n \geq 1$ and we assume that there exist $A_{n-1,\, k}^j$ for $0 \leq j \leq |k|-1$ as in the Lemma.
From Lemma \ref{recurrence} and the assumption of the induction,
\begin{align*}
  \sum_{m\geq 0} \chi_{n, \, -km} \, t^m
  &= \left( \sum_{m \geq 0} \chi_{n-1, \, -km} t^m + 
      \sum_{j=1}^{|k|-1} \left( \sum_{m \geq 0} \chi_{n-1, \, -km+j} \, t^m \right) t_{n+1}^{k+j}t \right) 
      \left[ \frac{1}{1-t_{n+1}^{k}t} \right]^{t = 0} \\
  &= \left( A_{n-1,\, k}^0 \,J_0(\lambda_{n}(k)) + 
      \sum_{j=1}^{|k|-1} A_{n-1,\, k}^j \, t_{n+1}^{k+j}t \,J_0(\lambda_{n}(k)) \right) 
      \left[ \frac{1}{1-t_{n+1}^{k}t} \right]^{t = 0} \\
  &= \left( A_{n-1,\, k}^0 + \sum_{j=1}^{|k|-1} A_{n-1,\, k}^j \, t_{n+1}^{k+j}t \right) J_0(\lambda_{n+1}(k))
\end{align*}
and
\begin{align*}
  \sum_{m < 0} \chi_{n, \, -km} \, t^m
  &= \left( \sum_{m < 0} \chi_{n-1, \, -km} t^m + 
      \sum_{j=1}^{|k|-1} \left( \sum_{m < 0} \chi_{n-1, \, -km+j} \, t^m \right) t_{n+1}^{k+j}t \right) 
      \left[ \frac{1}{1-t_{n+1}^{k}t} \right]^{t = \infty} \\
  &= -\left( A_{n-1,\, k}^0 \,J_0(\lambda_{n}(k)) + 
      \sum_{j=1}^{|k|-1} A_{n-1,\, k}^j \, t_{n+1}^{k+j}t \,J_{\infty}(\lambda_{n}(k)) \right) 
      \left[ \frac{1}{1-t_{n+1}^{k}t} \right]^{t = 0} \\
  &= -\left( A_{n-1,\, k}^0 + \sum_{j=1}^{|k|-1} A_{n-1,\, k}^j \, t_{n+1}^{k+j}t \right) J_{\infty}(\lambda_{n+1}(k)).
\end{align*}
Therefore we can find that $\displaystyle A_{n,\, k}^0 = A_{n-1,\, k}^0 + \sum_{j=1}^{|k|-1} A_{n-1,\, k}^j \, t_{n+1}^{k+j}t$.
Moreover from (\ref{oneshift}) in Lemma \ref{shift} and the assumption of the induction,
\begin{align*}
  \sum_{m \geq 0} \chi_{n,\, -km+j} \, t^m &= \sum_{m \geq 0} \chi_{n-1,\, -km+j} \, t^m + 
            t_{n+1}^{-1} \sum_{m \geq 0} \chi_{n,\, -km+j-1} \, t^m \\
  &= A_{n-1,\, k}^{j} \, J_0(\lambda_{n}(k)) + t_{n+1}^{-1} A_{n,\, k}^{j-1} \, J_0(\lambda_{n+1}(k)) \\
  &= \left( (1-t_{n+1}^kt) A_{n-1,\, k}^{j} + t_{n+1}^{-1} A_{n,\, k}^{j-1} \right) \, J_0(\lambda_{n+1}(k))
\end{align*}
and
\begin{align*}
  \sum_{m < 0} \chi_{n,\, -km+j} \, t^m &= \sum_{m < 0} \chi_{n-1,\, -km+j} \, t^m + 
            t_{n+1}^{-1} \sum_{m < 0} \chi_{n,\, -km+j-1} \, t^m \\
  &= -A_{n-1,\, k}^{j} \, J_{\infty}(\lambda_{n}(k)) - t_{n+1}^{-1} A_{n,\, k}^{j-1} \, J_{\infty}(\lambda_{n+1}(k)) \\
  &= -\left( (1-t_{n+1}^kt) A_{n-1,\, k}^{j} + t_{n+1}^{-1} A_{n,\, k}^{j-1} \right) \, J_{\infty}(\lambda_{n+1}(k)).
\end{align*}
Thus for $1 \leq j \leq |k|-1$, we can find that $A_{n,\, k}^j = (1-t_{n+1}^kt) A_{n-1,\, k}^{j} + t_{n+1}^{-1} A_{n,\, k}^{j-1}$ inductively.
\end{proof}

\begin{rem} \label{Arecurrence}
From the above proof, we find that for $k \leq -2$, $A_{n,\, k}^j$ satisfy the following relations:
\begin{align}
    &A_{n,\, k}^0 = A_{n-1,\, k}^0 + \sum_{j=1}^{|k|-1} A_{n-1,\, k}^j \, t_{n+1}^{k+j}t \label{A0} \\
    &A_{n,\, k}^j = (1-t_{n+1}^kt) A_{n-1,\, k}^{j} + t_{n+1}^{-1} A_{n,\, k}^{j-1} \label{Aj} \qquad (1 \leq j \leq |k|-1)
\end{align}
for $n \geq 1$, and $A_{0,\, k}^j = t_1^{-j}$ $(0 \leq j \leq |k|-1)$. We also find that if $k = -1$, $A_{n,\, -1}^0 = 1$ for all $n \geq 0$.
\end{rem}

The following is a key lemma.

\begin{lem} \label{chiandsym}
Let $n\geq 0$,\, $k \in \mathbb{Z}$,\, $j \in \mathbb{Z}$. Then the following equation holds:
\[
  \sum_{i=0}^{n+1} (-1)^i \chi_{n,\, ik+j}  \, s_i (t_1^k,\, \dots, \, t_{n+1}^k) = 0 ,
\]
where $s_i (t_1^k,\, \dots, \, t_{n+1}^k)$ are the $i$-th elementary symmetric polynomials in variables $t_1^k,\, \dots, \, t_{n+1}^k$.
\end{lem}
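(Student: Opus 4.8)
The plan is to read the sum as a single Fourier coefficient of a product of distributions and to exploit the vanishing relation $\lambda_{n+1}(k)\,J(\lambda_{n+1}(k))=0$ recorded in Remark~\ref{lamandJ}. Expanding the defining product gives $\lambda_{n+1}(k)=\sum_{i=0}^{n+1}(-1)^i s_i(t_1^k,\dots,t_{n+1}^k)\,t^i$, so that $(-1)^i s_i(t_1^k,\dots,t_{n+1}^k)$ is precisely the coefficient of $t^i$ in $\lambda_{n+1}(k)$. Writing $\Phi_{k,j}:=\sum_{m\in\mathbb{Z}}\chi_{n,-km+j}\,t^m$, the coefficient of $t^0$ in the product $\lambda_{n+1}(k)\,\Phi_{k,j}$ is exactly $\sum_{i=0}^{n+1}(-1)^i\chi_{n,ik+j}\,s_i(t_1^k,\dots,t_{n+1}^k)$, because the index-$i$ term pairs the $t^i$ of $\lambda_{n+1}(k)$ with the coefficient of $t^{-i}$ in $\Phi_{k,j}$, which is $\chi_{n,ki+j}$. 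Thus it suffices to prove $\lambda_{n+1}(k)\,\Phi_{k,j}=0$ and then read off the constant term; since $\lambda_{n+1}(k)$ is a Laurent polynomial, this product is computed coefficientwise and the extraction of the $t^0$ term is legitimate, producing an honest identity in $R(T^{n+1})$.

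For $k<0$ and $0\le j\le |k|-1$, Lemma~\ref{A} gives $\sum_{m\ge 0}\chi_{n,-km+j}t^m=A_{n,k}^j\,J_0(\lambda_{n+1}(k))$ and $\sum_{m<0}\chi_{n,-km+j}t^m=-A_{n,k}^j\,J_\infty(\lambda_{n+1}(k))$, whose sum is $\Phi_{k,j}=A_{n,k}^j\,J(\lambda_{n+1}(k))$. Since $A_{n,k}^j$ is a Laurent polynomial (Remark~\ref{Arecurrence}), multiplying by $\lambda_{n+1}(k)$ and using $\lambda_{n+1}(k)J(\lambda_{n+1}(k))=0$ yields $\lambda_{n+1}(k)\,\Phi_{k,j}=0$. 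To reach every $j\in\mathbb{Z}$ I would write $j=|k|q+j_0$ with $q\in\mathbb{Z}$ and $0\le j_0\le|k|-1$; since $-km+j=-k(m+q)+j_0$ when $k<0$, one gets $\Phi_{k,j}=t^{-q}\,\Phi_{k,j_0}$, hence $\lambda_{n+1}(k)\,\Phi_{k,j}=t^{-q}\cdot 0=0$ as well. Extracting the $t^0$ coefficient then proves the lemma for all $k<0$ and all $j$.

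The case $k=0$ is handled separately and is immediate: there $s_i(1,\dots,1)=\binom{n+1}{i}$ and $\chi_{n,ik+j}=\chi_{n,j}$ is independent of $i$, so the sum equals $\chi_{n,j}\sum_{i=0}^{n+1}(-1)^i\binom{n+1}{i}=\chi_{n,j}(1-1)^{n+1}=0$. For $k>0$ I would deduce the statement from the $k<0$ case via the ring automorphism $\tau$ of $R(T^{n+1})$ determined by $t_p\mapsto t_p^{-1}$. From Corollary~\ref{chicharacter} one checks, by comparing the three ranges $l\ge 0$, $-n-1<l<0$, $l\le -n-1$, the duality $\tau(\chi_{n,l})=(-1)^n(t_1\cdots t_{n+1})^{-1}\chi_{n,-l-n-1}$ for all $l\in\mathbb{Z}$, whereas plainly $\tau(s_i(t_1^k,\dots,t_{n+1}^k))=s_i(t_1^{-k},\dots,t_{n+1}^{-k})$. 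Applying $\tau$ to the already-proven identity for $\kappa=-k<0$ with index $\iota=-(j+n+1)$, the common factor $(-1)^n(t_1\cdots t_{n+1})^{-1}$ factors out and cancels, while the shifted index $-i\kappa-\iota-n-1$ collapses to $ik+j$; this gives exactly the desired identity for $k>0$.

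As for where the content sits: everything rests on the single relation $\lambda_{n+1}(k)J(\lambda_{n+1}(k))=0$, which is just the statement that $J_0$ and $J_\infty$ are the two Laurent expansions of the same rational function $\lambda_{n+1}(k)^{-1}$. The only points demanding care are the coefficientwise legitimacy of multiplying by the Laurent polynomial $\lambda_{n+1}(k)$ and the index bookkeeping in the $\tau$-reduction for $k>0$. I expect that index arithmetic in the $\tau$-step to be the main place to slip, but it is entirely routine once the duality formula for $\chi_{n,l}$ is established.
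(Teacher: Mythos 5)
Your proof is correct, and for the cases $k<0$ and $k=0$ it is essentially the paper's own argument: both rest on Lemma \ref{A} together with the relation $\lambda_{n+1}(k)\,J(\lambda_{n+1}(k))=0$ from Remark \ref{lamandJ}, followed by extracting a single Fourier coefficient of $\lambda_{n+1}(k)\sum_{m}\chi_{n,\,-km+j_0}\,t^m=0$. (The paper reads off the coefficient of $t^{\alpha}$ where $j=-k\alpha+j_0$; your shift $\Phi_{k,j}=t^{-q}\Phi_{k,j_0}$ followed by the $t^0$ coefficient is the same computation, and your observation that $A_{n,k}^j$ is a Laurent polynomial, so that the multiplication is legitimate coefficientwise, is the right point to flag.) The only genuine divergence is the reduction of $k>0$ to $k<0$. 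The paper stays entirely on the symmetric-function side: it uses $s_{\alpha}(t_1^k,\dots,t_{n+1}^k)=t_1^k\cdots t_{n+1}^k\,s_{n+1-\alpha}(t_1^{-k},\dots,t_{n+1}^{-k})$ and reindexes $i=n+1-\alpha$ inside the sum, which lands directly on the already-proven identity for $-k<0$ with shifted index $k(n+1)+j$ and requires no new facts about $\chi_{n,l}$. You instead apply the involution $\tau\colon t_p\mapsto t_p^{-1}$ and invoke the duality $\tau(\chi_{n,l})=(-1)^n(t_1\cdots t_{n+1})^{-1}\chi_{n,\,-l-n-1}$; this formula is correct (it is Serre duality read off from the three ranges of Corollary \ref{chicharacter}, which do match up under $l\mapsto -l-n-1$), and your index bookkeeping $\iota=-(j+n+1)$, giving $-i\kappa-\iota-n-1=ik+j$ with $\kappa=-k$, checks out. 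The cost is an extra verification that the paper's purely combinatorial reflection avoids; the benefit is that your route makes the underlying cohomological duality explicit rather than hiding it in an identity of elementary symmetric polynomials. Both are valid proofs.
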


\begin{proof}
\begin{itemize}
  \item $k<0$ : For any $j \in \mathbb{Z}$, there exist integers $\alpha$ and $j_0$ 
             such that $j=-k\alpha + j_0$, and $0 \leq j_0 \leq |k|-1$.
  From Lemma \ref{A}, we have
    \[
       \sum_{m \in \mathbb{Z}} \chi_{n,\, -km+j_0} \, t^m = A_{n,\, k}^{j_0} \,J(\lambda_{n+1}(k)).
    \]
  In addition, since $\lambda_{n+1}(k)J(\lambda_{n+1}(k)) = 0$,
    \[
      \lambda_{n+1}(k)\sum_{m \in \mathbb{Z}} \chi_{n,\, -km+j_0} \, t^m =0.
    \]
  Calculating the coefficient of $t^{\alpha}$ in the left hand side, we find that
    \[
      \sum_{i=0}^{n+1} (-1)^i \chi_{,\, ik-k\alpha +j_0} \, s_i(t_1^k,\, \dots ,\, t_{n+1}^k) = 0.
    \]
  Thus we obtain the required equation for $k<0$.

  \item $k=0$ : For any $j \in \mathbb{Z}$, we have
    \[
      \sum_{i=0}^{n+1} (-1)^i \chi_{n,\, j} \, s_i(t_1^0,\, \dots ,\, t_{n+1}^0) = 
        \chi_{n,\, j} \sum_{i=0}^{n+1} (-1)^i \Big( \substack{n+1 \\ \\ i} \Big) = \chi_{n,\, j} (1-1)^{n+1} =0.
    \]

  \item $k>0$ : Let $j \in \mathbb{Z}$. Since $s_{\alpha} (t_1^k,\, \dots, \, t_{n+1}^k) = t_1^k \dots t_{n+1}^k s_{n+1-\alpha} (t_1^{-k},\, \dots, \, t_{n+1}^{-k})$ for \, $0 \leq \alpha \leq n+1$,
    \begin{align*}
      &\quad \sum_{\alpha =0}^{n+1} (-1)^{\alpha} \chi_{n,\, \alpha k+j}  \, s_{\alpha} (t_1^k,\, \dots, \, t_{n+1}^k) \\
      &= t_1^k \dots t_{n+1}^k \sum_{\alpha =0}^{n+1} (-1)^{\alpha} \chi_{n,\, \alpha k+j}  \, 
             s_{n+1-\alpha} (t_1^{-k},\, \dots, \, t_{n+1}^{-k})\\
      &= (-1)^{n+1} t_1^k \dots t_{n+1}^k \sum_{i =0}^{n+1} (-1)^{i} \chi_{n,\, i(-k) + k(n+1)+j}  \, 
             s_i (t_1^{-k},\, \dots, \, t_{n+1}^{-k}) \\
      &=0.
    \end{align*}
  The last equality follows from the case of $k < 0$.
\end{itemize}
\end{proof}

\begin{dfn}
For $n \geq 0$, \, $k \in \mathbb{Z}$, and $0\leq j \leq \max\{0, \, |k|-1\}$, we define the smooth functions $B_{n,\, k}^{j}$ on $T^{n+1} \times S^1$ as follows:
\begin{align*}
  B_{n,\, k}^j &= B_{n,\, k}^j (t_1,\, \dots ,\, t_{n+1}, \, t) := 
    \sum_{l=0}^n \left( \sum_{i=1}^{n+1-l} (-1)^{i+l+1} \chi_{n,\, ik+j} \, s_{i+l}(t_1^k,\, \dots, \, t_{n+1}^k) \right) t^l,
\end{align*}
where $s_{i+l} (t_1^k,\, \dots, \, t_{n+1}^k)$ are the $(i+l)$-th elementary symmetric polynomials in variables $t_1^k,\, \dots, \, t_{n+1}^k$.
\end{dfn}

\begin{rem} \label{anoB}
From the above Lemma \ref{chiandsym}, $B_{n,\, k}^j$ can be expressed in another way as follows:
\begin{align*}
  B_{n,\, k}^j &= \sum_{l=0}^n \left( \sum_{i=1}^{n+1-l} (-1)^{i+l+1} 
      \chi_{n,\, ik+j} \, s_{i+l} (t_1^k,\, \dots, \, t_{n+1}^k) \right) t^l \\
  &= \sum_{l=0}^n \left( \sum_{\alpha =0}^l (-1)^{\alpha} 
      \chi_{n,\, (\alpha -l)k+j} \, s_{\alpha}(t_1^k,\, \dots, \, t_{n+1}^k) \right) t^l
\end{align*}
\end{rem}

\begin{lem} \label{Brecurrence}
Let $k \leq -2$ be fixed. For $n \geq 0$, $B_{n,\, k}^j$ satisfy the following relations:
  \begin{align}
    & B_{n,\, k}^0 = B_{n-1,\, k}^0 + \sum_{j = 1}^{|k|-1} B_{n-1,\, k}^j t_{n+1}^{k+j} t , \label{B0} \\
    & B_{n,\, k}^j = (1-t_{n+1}^kt) B_{n-1,\, k}^j + t_{n+1}^{-1} B_{n,\, k}^{j-1} \qquad (1 \leq j \leq |k|-1) . \label{Bj}
  \end{align}
\end{lem}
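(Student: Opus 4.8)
The plan is to deduce both recurrences from a single auxiliary identity tying the explicit polynomial $B_{n,\,k}^j$ to the one-sided generating function of the characters $\chi_{n,\,l}$. First I would establish, for $k \leq -2$, $n \geq 0$ and $0 \leq j \leq |k|-1$, the identity
\[
  \lambda_{n+1}(k)\,\sum_{m \geq 0}\chi_{n,\,-km+j}\,t^m \;=\; B_{n,\,k}^{\,j} ,
\]
read as an equality of formal power series in $t$ whose coefficients are Laurent polynomials in $t_1,\dots,t_{n+1}$; call it $(\star)$. The point of $(\star)$ is that once it holds at levels $n$ and $n-1$, both recurrences follow by the same mechanism: apply a level-lowering relation to the generating function, multiply through by $\lambda_{n+1}(k) = \lambda_n(k)\,(1-t_{n+1}^k t)$, and translate each surviving generating function back into a $B$ via $(\star)$.

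To prove $(\star)$ I would expand $\lambda_{n+1}(k) = \sum_{i=0}^{n+1}(-1)^i s_i(t_1^k,\dots,t_{n+1}^k)\,t^i$ and compute the coefficient of $t^l$ in the product. For $l \geq n+1$ that coefficient is $\sum_{i=0}^{n+1}(-1)^i \chi_{n,\,ik+(j-kl)}\,s_i(t_1^k,\dots,t_{n+1}^k)$, which vanishes by the key Lemma \ref{chiandsym} (applied with its free index equal to $j-kl$); hence the left-hand side is in fact a polynomial in $t$ of degree at most $n$. For $0 \leq l \leq n$ the same computation yields $\sum_{\alpha=0}^{l}(-1)^\alpha \chi_{n,\,(\alpha-l)k+j}\,s_\alpha(t_1^k,\dots,t_{n+1}^k)$, which is exactly the coefficient of $t^l$ in $B_{n,\,k}^{\,j}$ in the reformulation recorded in Remark \ref{anoB}. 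This finishes $(\star)$, and I expect this to be the heart of the proof: the degree truncation is precisely what the key lemma buys, and the surviving coefficients are matched to $B_{n,\,k}^{\,j}$ through Remark \ref{anoB}.

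With $(\star)$ available, the relation \eqref{Bj} comes from the one-step shift $\chi_{n,\,l} = \chi_{n-1,\,l} + t_{n+1}^{-1}\chi_{n,\,l-1}$ of Lemma \ref{shift}\,(\ref{oneshift}): summing it against $t^m$ over $m \geq 0$ at $l = -km+j$ and multiplying by $\lambda_{n+1}(k) = \lambda_n(k)(1-t_{n+1}^k t)$ turns the three generating functions into $B_{n,\,k}^{\,j}$, $(1-t_{n+1}^k t)\,B_{n-1,\,k}^{\,j}$ and $t_{n+1}^{-1}B_{n,\,k}^{\,j-1}$ by $(\star)$, which is \eqref{Bj}. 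Similarly, \eqref{B0} follows by multiplying the identity of Lemma \ref{recurrence}\,(\ref{prec}) (in its $k \leq -2$ form) by $\lambda_{n+1}(k)$; the trailing factor $\left[\tfrac{1}{1-t_{n+1}^k t}\right]^{t=0}$ is absorbed via $(1-t_{n+1}^k t)\left[\tfrac{1}{1-t_{n+1}^k t}\right]^{t=0}=1$, after which $(\star)$ at level $n$ converts the left side into $B_{n,\,k}^{0}$ and $(\star)$ at level $n-1$ converts the right side into $B_{n-1,\,k}^{0} + \sum_{j=1}^{|k|-1} B_{n-1,\,k}^{\,j}\,t_{n+1}^{k+j}t$. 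Thus the only genuine obstacle is $(\star)$ itself; granting it, both recurrences are routine bookkeeping, and this route avoids the circular shuffling of $B$-terms that a purely hands-on expansion of \eqref{B0} runs into.
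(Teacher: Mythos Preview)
Your proof is correct and takes a genuinely different route from the paper's. The paper proves the two recurrences by direct expansion: for \eqref{B0} it writes out $B_{n-1,k}^0 + \sum_{j=1}^{|k|-1} B_{n-1,k}^j\, t_{n+1}^{k+j} t$, extracts the coefficient of each $t^l$ separately (using Lemma~\ref{chiandsym} for $l=0$, Lemma~\ref{shift}\,(\ref{nshift}) for $l=n$, and Lemma~\ref{shift}\,(\ref{mshift}) together with the splitting $s_{i+l}(t_1^k,\dots,t_{n+1}^k) = s_{i+l}(t_1^k,\dots,t_n^k) + t_{n+1}^k\, s_{i+l-1}(t_1^k,\dots,t_n^k)$ for $1 \leq l \leq n-1$), and checks each against the corresponding coefficient of $B_{n,k}^0$; for \eqref{Bj} it applies Lemma~\ref{shift}\,(\ref{oneshift}) to $\chi_{n,ik+j}$ inside the definition of $B_{n,k}^j$ and then the same symmetric-polynomial splitting. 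Your approach instead establishes the closed-form identity $(\star)$ up front --- this is precisely the computation the paper records in the Remark following Proposition~\ref{A=B}, where it serves as an alternative proof of $A_{n,k}^j = B_{n,k}^j$ --- and then reads off both recurrences by transporting the generating-function identities of Lemmas~\ref{shift} and~\ref{recurrence} through $(\star)$ and the factorization $\lambda_{n+1}(k) = (1-t_{n+1}^k t)\,\lambda_n(k)$. Your route is cleaner and more conceptual: it explains \emph{why} $B_{n,k}^j$ inherits the same recursion as $A_{n,k}^j$, namely because both equal $\lambda_{n+1}(k)\sum_{m\geq 0}\chi_{n,-km+j}\,t^m$. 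The paper's hands-on verification, by contrast, keeps Lemma~\ref{Brecurrence} logically independent of Lemma~\ref{A}, so that Proposition~\ref{A=B} can be proved afterwards as a pure ``same recursion, same initial values'' argument. In effect you have inverted the paper's order: rather than proving the $B$-recursion first and then matching to $A$, you prove $(\star)$ (equivalently $A=B$) first and derive the $B$-recursion from the already-established $A$-recursion.
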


\begin{proof}
By definition,
\begin{align}
  B_{n-1,\, k}^0 + \sum_{j=1}^{|k|-1} B_{n-1,\, k}^j \, t_{n+1}^{k+j}t
  &= \sum_{l=0}^{n-1} \left( \sum_{i=1}^{n-l} (-1)^{i+l+1} \chi_{n-1,\, ik+j} \, s_{i+l}(t_1^k,\, \dots, \, t_{n}^k) \right) t^l \notag \\
  &\qquad \qquad + \sum_{j=1}^{|k|-1} \left( \sum_{l=0}^{n-1} \left( \sum_{i=1}^{n-l} 
       (-1)^{i+l+1} \chi_{n-1,\, ik} \, s_{i+l}(t_1^k,\, \dots, \, t_{n}^k) \right) t^l \right) t_{n+1}^{k+j}t. 
\end{align}
From Lemma \ref{chiandsym} and Corollary \ref{chicharacter}, the coefficient of $t^0$ in the above equation is as follows:
\[
  \sum_{i=1}^{n} (-1)^{i+1} \chi_{n-1,\, ik} \, s_{i}(t_1^k,\, \dots, \, t_{n}^k) = 
    \chi_{n-1,\, 0} = 1 = \chi_{n,\, 0} = \sum_{i=1}^{n+1} (-1)^{i+1} \chi_{n,\, ik} \, s_{i}(t_1^k,\, \dots, \, t_{n+1}^k).
\]
In addition, the coefficient of $t^n$ is 
\begin{align*}
  \sum_{j=1}^{|k|-1} (-1)^{n-1} \chi_{n-1,\, k+j} \, s_n(t_1^k,\, \dots ,\, t_n^k) t_{n+1}^{k+j} 
  &= (-1)^{n-1} \left( \sum_{j=1}^{|k| - 1}  \chi_{n-1,\, k+j} \, t_{n+1}^j \right)  s_{n+1}(t_1^k,\, \dots ,\, t_{n+1}^k) \\
  &= (-1)^n \chi_{n,\, k} s_{n+1}(t_1^k,\, \dots ,\, t_{n+1}^k).
\end{align*}
Here we used (\ref{nshift}) in Lemma \ref{shift}.
If $n \geq 2$, the coefficients of $t^l$ $(1 \leq l \leq n-1)$ are calculated as follows:
\begin{align} \label{coef}
  & \quad \sum_{i=1}^{n-l} (-1)^{i+l+1} \chi_{n-1,\, ik} \, s_{i+l}(t_1^k,\, \dots, \, t_{n}^k) + 
       \sum_{j=1}^{|k|-1} \sum_{i=1}^{n+1-l} (-1)^{i+l} \chi_{n-1,\, ik} \, s_{i+l-1}(t_1^k,\, \dots, \, t_{n}^k) t_{n+1}^{k+j} \notag \\
  &=  \sum_{i=1}^{n-l} (-1)^{i+l+1} \chi_{n-1,\, ik} \, s_{i+l}(t_1^k,\, \dots, \, t_{n}^k) + 
       \sum_{i=1}^{n+1-l} (-1)^{i+l}\left(  \sum_{j=1}^{|k|-1} 
           \chi_{n-1,\, ik} \, t_{n+1}^{k+j} \right) \, s_{i+l-1}(t_1^k,\, \dots, \, t_{n}^k).
\end{align}
Here from (\ref{mshift}) in Lemma \ref{shift}, we have
\[
  \sum_{j=1}^{|k|-1} \chi_{n-1,\, ik} \, t_{n+1}^{k+j} = \chi_{n,\, (i-1)k} - \chi_{n,\, ik} \, t_{n+1}^k - \chi_{n-1,\, (i-1)k}.
\]
Using this equation, $(\ref{coef})$ is deformed as follows:
\begin{align*}
  & \quad \sum_{i=1}^{n-l} (-1)^{i+l+1} \chi_{n-1,\, ik} \, s_{i+l}(t_1^k,\, \dots, \, t_{n}^k) + 
      \sum_{i=1}^{n+1-l} (-1)^{i+l}\left(  \sum_{j=1}^{|k|-1} 
        \chi_{n-1,\, ik} \, t_{n+1}^{k+j} \right) \, s_{i+l-1}(t_1^k,\, \dots, \, t_{n}^k) \\
  &= (-1)^l s_{l}(t_1^k,\, \dots, \, t_{n}^k) + \sum_{i=1}^{n+1-l} (-1)^{i+l} \chi_{n,\, (i-1)k} \, s_{i+l-1}(t_1^k,\, \dots, \, t_{n}^k) \\
  &\qquad \qquad \qquad \qquad \qquad \qquad \qquad \qquad \qquad+ 
      \sum_{i=1}^{n+1-l} (-1)^{i+l+1} \chi_{n,\, ik} \, s_{i+l-1}(t_1^k,\, \dots, \, t_{n}^k) \, t_{n+1}^k \\
  &= \sum_{i=1}^{n+1-l} (-1)^{i+l+1} \chi_{n,\, ik} \, \left( s_{i+l}(t_1^k,\, \dots, \, t_{n}^k) + 
        s_{i+l-1}(t_1^k,\, \dots, \, t_{n}^k) \, t_{n+1}^k \right) \\
  &= \sum_{i=1}^{n+1-l} (-1)^{i+l+1} \chi_{n,\, ik} \, s_{i+l}(t_1^k,\, \dots, \, t_{n+1}^k).
\end{align*}
Therefore we have equation (\ref{B0}).
Next by definition and (\ref{shift}) in (\ref{oneshift}) in Lemma \ref{shift},
\begin{align*}
  B_{n,\, k}^{j} &= \sum_{l=0}^{n} \left( \sum_{i=1}^{n+1-l} 
    (-1)^{i+l+1} \chi_{n,\, ik+j} \, s_{i+l}(t_1^k,\, \dots, \, t_{n+1}^k) \right) t^l \\
  &= \sum_{l=0}^{n} \left( \sum_{i=1}^{n+1-l} 
     (-1)^{i+l+1} \chi_{n-1,\, ik+j} \, s_{i+l}(t_1^k,\, \dots, \, t_{n+1}^k) \right) t^l + t_{n+1}^{-1} B_{n,\, k}^{j-1}.
\end{align*}
In order to prove the equation (\ref{Bj}), it is sufficient to check the following:
\begin{equation} \label{B_n-1}
  \sum_{l=0}^{n} \left( \sum_{i=1}^{n+1-l} (-1)^{i+l+1} \chi_{n-1,\, ik+j} \, s_{i+l}(t_1^k,\, \dots, \, t_{n+1}^k) \right) t^l 
    = (1-t_{n+1}^kt) B_{n-1,\, k}^{j}.
\end{equation}
However this is very easy. Indeed, since
\[
  s_{i+l}(t_1^k,\, \dots, \, t_{n+1}^k) = s_{i+l}(t_1^k,\, \dots, \, t_{n}^k) + s_{i+l-1}(t_1^k,\, \dots, \, t_{n}^k) \, t_{n+1}^k,
\]
the left hand side in (\ref{B_n-1}) is equal to
\begin{align*}
  & \sum_{l=0}^{n} \left( \sum_{i=1}^{n+1-l} (-1)^{i+l+1} \chi_{n-1,\, ik+j} \, s_{i+l}(t_1^k,\, \dots, \, t_{n}^k) \right) t^l \\
  & \qquad \qquad \qquad \qquad \qquad \qquad + 
    t_{n+1}^k \sum_{l=0}^{n} \left( \sum_{i=1}^{n+1-l} (-1)^{i+l+1} \chi_{n-1,\, ik+j} \, s_{i+l-1}(t_1^k,\, \dots, \, t_{n}^k) \right) t^l.
\end{align*}
The first term is equal to $B_{n-1,\, k}^j$ because $s_{n+1}(t_1^k, \, \dots ,\, t_n^k) = 0$. Moreover from Lemma \ref{chiandsym}, we know that the coefficient of $t^0$ in the second term in the above is $0$. Thus by setting $l-1 = \tilde{l}$, the second term is equal to $-t_{n+1}^kt \, B_{n-1,\, k}^{j}$.
Therefore we obtain the equation (\ref{Bj}).
\end{proof}

\begin{prop} \label{A=B}
For $n \geq 0$, \, $k < 0$, and $0 \leq j \leq \max\{ 0,\,  |k|-1 \}$, we have 
\[
  A_{n,\, k}^j = B_{n,\, k}^j \quad \in \mathcal{D}(T^{n+1} \times S^1).
\]
\end{prop}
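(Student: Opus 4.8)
The plan is to exploit the fact that $A_{n,\,k}^j$ and $B_{n,\,k}^j$ satisfy one and the same system of recurrence relations, so that proving their equality reduces to checking base cases and then propagating along the recursions. For $k \le -2$, Remark \ref{Arecurrence} records that the functions $A_{n,\,k}^j$ obey (\ref{A0}) and (\ref{Aj}), while Lemma \ref{Brecurrence} shows that the functions $B_{n,\,k}^j$ obey the formally identical relations (\ref{B0}) and (\ref{Bj}). Since these two families are governed by the same recursion, it suffices to verify that they agree at $n=0$ and then conclude by induction.

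First I would dispose of the base case $n=0$ for $k \le -2$ and $0 \le j \le |k|-1$. By Remark \ref{Arecurrence} we have $A_{0,\,k}^j = t_1^{-j}$, so I only need $B_{0,\,k}^j = t_1^{-j}$, which is an immediate unwinding of the definition: when $n=0$ only the term $l=0,\, i=1$ survives, and using $\chi_{0,\,m}=t_1^{-m}$ from Corollary \ref{chicharacter} together with $s_1(t_1^k)=t_1^k$ gives $B_{0,\,k}^j = \chi_{0,\,k+j}\,t_1^k = t_1^{-(k+j)}t_1^k = t_1^{-j}$. With the base case secured, I would run a double induction for fixed $k \le -2$: on $n$, and within each $n$ on $j$. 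Assuming $A_{n-1,\,k}^j = B_{n-1,\,k}^j$ for all admissible $j$, equality at $j=0$ follows directly from (\ref{A0}) and (\ref{B0}), since their right-hand sides are the same expression in the quantities $A_{n-1,\,k}^{j'}$, which coincide with $B_{n-1,\,k}^{j'}$ by hypothesis. Then for $1 \le j \le |k|-1$, relations (\ref{Aj}) and (\ref{Bj}) express $A_{n,\,k}^j$ and $B_{n,\,k}^j$ through $A_{n-1,\,k}^j = B_{n-1,\,k}^j$ and the already-established $A_{n,\,k}^{j-1}=B_{n,\,k}^{j-1}$, which closes the induction.

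The one case not covered by the recurrences of Remark \ref{Arecurrence} and Lemma \ref{Brecurrence} is $k=-1$, where the only admissible index is $j=0$ and $A_{n,\,-1}^0 = 1$ for every $n$; here I would prove $B_{n,\,-1}^0 = 1$ directly. Writing $B_{n,\,-1}^0$ in the alternative form of Remark \ref{anoB}, the coefficient of $t^l$ is $\sum_{\alpha=0}^{l}(-1)^\alpha \chi_{n,\,l-\alpha}\,s_\alpha(t_1^{-1},\dots,t_{n+1}^{-1})$. For $l=0$ this is $\chi_{n,\,0}=1$, and for $1 \le l \le n$ I would complete the sum to $\alpha=n+1$: the extra terms $\sum_{\alpha=l+1}^{n+1}(-1)^\alpha \chi_{n,\,l-\alpha}\,s_\alpha$ all vanish because $-n \le l-\alpha \le -1$ lies in the range $-n-1 < l-\alpha < 0$, where $\chi_{n,\,l-\alpha}=0$ by Corollary \ref{chicharacter}, while the completed sum is zero by Lemma \ref{chiandsym}. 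Hence $B_{n,\,-1}^0 = 1 = A_{n,\,-1}^0$.

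All of the computations here are routine once the recurrences are in hand, the genuine work having already been carried out in establishing (\ref{A0})--(\ref{Aj}) and (\ref{B0})--(\ref{Bj}). The only point demanding a little care is the separate treatment of $k=-1$, for which the vanishing range of $\chi_{n,\,m}$ supplied by Corollary \ref{chicharacter} is precisely what lets the partial symmetric-polynomial sum collapse onto the full alternating sum governed by Lemma \ref{chiandsym}.
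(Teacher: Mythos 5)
Your proposal is correct and follows essentially the same route as the paper: both establish the equality by checking the common initial values $A_{0,\,k}^j = t_1^{-j} = B_{0,\,k}^j$ and invoking the identical recurrences of Remark \ref{Arecurrence} and Lemma \ref{Brecurrence} for $k \leq -2$, treating $k=-1$ separately via $A_{n,\,-1}^0 = 1 = B_{n,\,-1}^0$. Your verification of $B_{n,\,-1}^0 = 1$ by completing the partial sum and applying Lemma \ref{chiandsym} is a valid (slightly more roundabout) substitute for the paper's direct appeal to the vanishing of $\chi_{n,\,-i}$ for $1 \leq i \leq n$ in Corollary \ref{chicharacter}.
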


\begin{proof}
When $k = -1$, by definition and Corollary \ref{chicharacter}, $B_{n,\, -1}^0 = 1$ for $n \geq 0$. Thus we have $A_{n,\, -1}^0 = B_{n,\, -1}^0$.
Let $k \leq -2$. Since the both initial values $A_{0,\, k}^j$ and $B_{0,\, k}^j$ are equal to $t_1^{-j}$ for $0 \leq j \leq |k|-1$, in addition both satisfy the same recurrence relations (see Remark \ref{Arecurrence} and Lemma \ref{Brecurrence}), we have $A_{n,\, k}^j = B_{n,\, k}^j$ for $n \geq 0$ and $0 \leq j \leq |k|-1$.
\end{proof}

\begin{rem}
We can prove this proposition without using Lemma \ref{Brecurrence} as follows. Since we know that
$\lambda_{n+1}(k) J(\lambda_{n+1}(k)) = 1$ (see Remark \ref{lamandJ}), multiplying the both sides in (\ref{A+}) in Lemma \ref{A} by $\lambda_{n+1}(k)$, we obtain 
\begin{align*}
  A_{n,\, k}^j 
    &= \lambda_{n + 1}(k) \sum_{m \geq 0} \chi_{n,\, -km + j} \, t^m \\
    &= \left( \sum_{\alpha = 0}^{n + 1} (-1)^{\alpha} s_{\alpha}(t_1^k,\, \dots ,\, t_{n + 1}^k) t^{\alpha} \right) 
          \sum_{m \geq 0} \chi_{n,\, -km + j} \, t^m \\
    &= \sum_{l=0}^n \left( \sum_{\alpha = 0}^l (-1)^{\alpha} \chi_{n,\, (\alpha - l)k + j} \, 
    s_{\alpha} (t_1^k,\, \dots ,\, t_{n+1}^k) \right) t^l \\
    &= B_{n,\, k}^j.
\end{align*}
Here, we used Lemma \ref{chiandsym}. 
\end{rem}

\begin{prop}\label{B}
For $n\geq 0$,\, $k \in \mathbb{Z}$,\, $0 \leq j \leq \max\{0, \, |k|-1\}$, we have
\begin{enumerate}
  \item $\displaystyle \sum _{m \geq 0} \chi_{n,\, -km+j} \, t^m = B_{n,\, k}^j J_0 \left( \lambda_{n+1}(k)\right)$ , and

  \item $\displaystyle \sum _{m < 0} \chi_{n,\, -km+j} \, t^m = -B_{n,\, k}^j J_{\infty} \left( \lambda_{n+1}(k)\right)$ .
\end{enumerate}
In particular, the following equation holds:
\[
  \sum_{m \in \mathbb{Z}} \chi_{n,\, -km+j} \, t^m = B_{n, \, k}^j \, J\left( \lambda_{n+1}(k)\right) .
\]
\end{prop}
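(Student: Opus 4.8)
The plan is to establish parts (1) and (2) first and then read off the ``in particular'' identity by addition, since $J(\lambda_{n+1}(k)) = J_0(\lambda_{n+1}(k)) - J_{\infty}(\lambda_{n+1}(k))$ turns $B_{n,\, k}^j J_0 + \left( -B_{n,\, k}^j J_{\infty}\right)$ into $B_{n,\, k}^j J$. For $k < 0$ there is essentially nothing to do: parts (1) and (2) are exactly the identities (\ref{A+}) and (\ref{A-}) of Lemma \ref{A}, once $A_{n,\, k}^j$ is replaced by $B_{n,\, k}^j$ via Proposition \ref{A=B}. The genuine content therefore lies in the cases $k = 0$ and $k > 0$, which I would treat by a single multiplication-by-$\lambda_{n+1}(k)$ argument that in fact works uniformly for every $k$.

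The core step is the polynomial identity
\[
  \lambda_{n+1}(k) \sum_{m \geq 0} \chi_{n,\, -km+j}\, t^m = B_{n,\, k}^j ,
\]
proved by comparing coefficients of $t^l$. Writing $\lambda_{n+1}(k) = \sum_{\alpha=0}^{n+1}(-1)^{\alpha} s_{\alpha}(t_1^k,\, \dots,\, t_{n+1}^k)\, t^{\alpha}$, the coefficient of $t^l$ on the left is $\sum_{\alpha} (-1)^{\alpha} s_{\alpha} \chi_{n,\, -k(l-\alpha)+j}$, summed over $0 \leq \alpha \leq n+1$ with $l - \alpha \geq 0$. For $0 \leq l \leq n$ the constraint forces $0 \leq \alpha \leq l$, and since $-k(l-\alpha)+j = (\alpha-l)k+j$, the sum is precisely the coefficient of $t^l$ in the alternative expression for $B_{n,\, k}^j$ recorded in Remark \ref{anoB}. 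For $l \geq n+1$ the constraint is vacuous; substituting $j' := j - kl$ so that $-k(l-\alpha)+j = \alpha k + j'$, the coefficient becomes $\sum_{\alpha=0}^{n+1}(-1)^{\alpha} s_{\alpha}(t_1^k,\, \dots,\, t_{n+1}^k)\,\chi_{n,\, \alpha k + j'}$, which vanishes by Lemma \ref{chiandsym}. Thus the left-hand series collapses to the polynomial $B_{n,\, k}^j$, and multiplying through by $J_0(\lambda_{n+1}(k))$ together with $\lambda_{n+1}(k) J_0(\lambda_{n+1}(k)) = 1$ from Remark \ref{lamandJ} yields part (1).

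For part (2) I would avoid repeating the computation by passing through the two-sided sum: the same coefficient calculation, now with $m = l - \alpha$ of arbitrary sign, shows via Lemma \ref{chiandsym} that $\lambda_{n+1}(k)\sum_{m \in \mathbb{Z}}\chi_{n,\, -km+j}\, t^m = 0$ for every $l$, whence $\lambda_{n+1}(k)\sum_{m < 0}\chi_{n,\, -km+j}\, t^m = -B_{n,\, k}^j$; multiplying by $J_{\infty}(\lambda_{n+1}(k))$ and using $\lambda_{n+1}(k) J_{\infty}(\lambda_{n+1}(k)) = 1$ gives part (2). The step needing care — and the main obstacle — is the bookkeeping of these formal manipulations: the series $\sum_{m \geq 0}$ and $\sum_{m < 0}$ live in different completions (formal power series in $t$ versus formal Laurent series at $t = \infty$), and ``dividing by $\lambda_{n+1}(k)$'' means multiplying by $J_0$ in the first ring and by $J_{\infty}$ in the second. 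One must verify that each one-sided series genuinely sits in the appropriate ring, so that this inversion is legitimate and unambiguous, and that every coefficientwise product is a finite sum; Lemma \ref{chiandsym} is exactly the input forcing the high-degree coefficients (and the full two-sided coefficients) to vanish.
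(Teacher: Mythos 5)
Your proof is correct, and for the cases $k=0$ and $k>0$ it takes a genuinely different route from the paper. The paper reuses Lemma \ref{A} and Proposition \ref{A=B} for $k<0$ (as you do), but then treats $k=0$ by a direct computation showing $B_{n,\,0}^0=(1-t)^n$ and cancelling against $\left[(1-t)^{-(n+1)}\right]^{t=0}$, and treats $k>0$ by the substitution $t=s^{-1}$, which reduces to the case $-k<0$ at the cost of a fairly long algebraic identity relating $B_{n,\,-k}^j(t_1,\dots,t_{n+1},s)$ to $B_{n,\,k}^j(t_1,\dots,t_{n+1},t)$ up to a multiple of $\lambda_{n+1}(k)$ (the identity (\ref{eq3inB}), which also forces the bookkeeping of the boundary terms $m=0$ versus $m>0$). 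Your argument instead proves the single polynomial identity $\lambda_{n+1}(k)\sum_{m\geq 0}\chi_{n,\,-km+j}t^m=B_{n,\,k}^j$ by coefficient comparison --- Remark \ref{anoB} for $0\leq l\leq n$ and Lemma \ref{chiandsym} for $l\geq n+1$ --- and derives part (2) from the two-sided vanishing $\lambda_{n+1}(k)\sum_{m\in\mathbb{Z}}\chi_{n,\,-km+j}t^m=0$, again by Lemma \ref{chiandsym}. This is uniform in $k$, avoids the inversion substitution entirely, and is essentially the extension to all $k$ of the alternative argument the paper itself sketches in the remark following Proposition \ref{A=B} (which is stated there only for $k<0$). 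Your closing caveat is the right one and is easily discharged: $\sum_{m\geq 0}$ lives in $R(T^{n+1})[[t]]$, where $\lambda_{n+1}(k)$ has constant term $1$ and inverse $J_0(\lambda_{n+1}(k))$, while $\sum_{m<0}$ lives in $t^{-1}R(T^{n+1})[[t^{-1}]]$, where the top coefficient $(-1)^{n+1}t_1^k\cdots t_{n+1}^k$ of $\lambda_{n+1}(k)$ is a unit and the inverse is $J_{\infty}(\lambda_{n+1}(k))$; since $\lambda_{n+1}(k)$ is a polynomial in $t$, every coefficientwise product involved is a finite sum. What your approach buys is a shorter, case-free proof; what the paper's buys is an explicit reduction of the $k>0$ formula to the $k<0$ one under $t\mapsto t^{-1}$, which makes the symmetry between $J_0$ and $J_{\infty}$ visible.
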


\begin{proof}
If $k < 0$, these immediately follow from Lemma \ref{A} and Proposition \ref{A=B}. It is sufficient to verify them for $k \geq 0$.
When $k = 0$, we can see them by the straightforward calculation. Since
\begin{align*}
  B_{n,\,0}^0 &= \sum_{l=0}^n \left( \sum_{i=1}^{n+1-l} (-1)^{i+l+1} \chi_{n,\, 0} \, s_{i+l}(t_1^0,\, \dots, \, t_{n+1}^0) \right) t^l \\
  &= \sum_{l=0}^n \left( \sum_{i=1}^{n+1-l} (-1)^{i+l+1} \Big( \substack{n+1 \\ \\ i+l} \Big) \right) t^l \\
  &= \sum_{l=0}^n \Big( \substack{n \\ \\ l} \Big) (-t)^l \\
  &= (1-t)^n ,
\end{align*}
we obtain 
\begin{align*}
  B_{n,\,0}^0 J_0(\lambda_{n+1}(0)) &= (1-t)^n \left[ \frac{1}{(1-t)^{n+1}} \right]^{t=0} = 
    \left[ \frac{1}{1-t} \right]^{t=0} = \sum_{m \geq 0} t^m \\
  -B_{n,\, 0}^0 J_{\infty}(\lambda_{n+1}(0)) &= (1-t)^n \left[ \frac{1}{(1-t)^{n+1}} \right]^{t= \infty} = 
    \left[ \frac{1}{1-t} \right]^{t= \infty} = \sum_{m<0} t^m.
\end{align*}
Let $k > 0$, and set $t=s^{-1}$, then from the case of $k < 0$,
\begin{align}
  \sum_{m>0} \chi_{n,\, -km+j} \, t^m &= \sum_{m>0} \chi_{n,\, -km+j} \, s^{-m} = 
    \sum_{\tilde{m} <0} \chi_{n,\, k\tilde{m} +j} \, s^{\tilde{m}} \notag \\
  &= -B_{n,\, -k}^j(t_1,\, \dots, \, t_{n+1}, s) \, J_{\infty}(\lambda_{n+1}(-k)) (t_1,\, \dots, \, t_{n+1}, s) . \label{eq1inB}
\end{align}
Since
\[
  \left[ \frac{1}{1-az} \right]^{z=\infty} = -a^{-1}z^{-1} \left[ \frac{1}{1-a^{-1}z^{-1}} \right]^{z^{-1}=0} 
    \quad ( a \in \mathbb{C} \setminus \{0\} ) ,
\]
we obtain
\begin{align}
  J_{\infty}(\lambda_{n+1}(-k)) (t_1,\, \dots, \, t_{n+1}, s) 
  &= \prod_{j=1}^{n+1} \left[ \frac{1}{1-t_j^{-k}s} \right]^{s =\infty} \notag \\
  &= (-1)^{n+1} t_1^k \dots t_{n+1}^k \, s^{-(n+1)} \prod_{j=1}^{n+1} \left[ \frac{1}{1-t_j^k s^{-1}} \right]^{s^{-1} = 0} \notag \\
  &= (-1)^{n+1} t_1^k \dots t_{n+1}^k \, t^{n+1} J_0(\lambda_{n+1}(k)) (t_1,\, \dots, \, t_{n+1}, t) . \label{eq2inB}
\end{align}
Moreover
\begin{align}
  & \quad -B_{n,\, -k}^j(t_1,\, \dots, \, t_{n+1}, s) \, (-1)^{n+1} t_1^k \dots t_{n+1}^k \, t^{n+1} \notag \\
  &= (-1)^n t_1^k \dots t_{n+1}^k \sum_{l=0}^n \left( \sum_{i=1}^{n+1-l} 
    (-1)^{i+l+1} \chi_{n,\, -ik+j} \, s_{i+l} (t_1^{-k},\, \dots, \, t_{n+1}^{-k}) \right) s^l \notag \\
  &= (-1)^n t_1^k \dots t_{n+1}^k \sum_{i=1}^{n+1} (-1)^{i+1} \chi_{n,\, -ik+j} \, s_i (t_1^{-k},\, \dots, \, t_{n+1}^{-k}) \notag \\
  & \qquad \qquad \qquad \qquad + (-1)^n \sum_{l=1}^n \left( \sum_{i=1}^{n+1-l} 
    (-1)^{i+l+1} \chi_{n,\, -ik+j} \, s_{n+1-i-l} (t_1^k,\, \dots, \, t_{n+1}^k) \right) t^{n+1-l} \notag \\
  &= \chi_{n,\, j} \sum_{\alpha = 0}^n (-1)^{\alpha} \, s_{\alpha}(t_1^k,\, \dots, \, t_{n+1}^k) \, t^{\alpha} 
    - \chi_{n,\, j} \lambda_{n+1}(k) 
    + \sum_{\alpha =1}^n \left( \sum_{i=1}^{\alpha} (-1)^{i+\alpha} \chi_{n,\, -ik+j} \, 
         s_{\alpha -i} (t_1^k,\, \dots, \, t_{n+1}^k) \right) t^{\alpha} \notag \\
  &= \chi_{n,\, j} + \sum_{\alpha =1}^n \left( (-1)^{\alpha} \chi_{n,\, j} s_{\alpha}(t_1^k,\, \dots, \, t_{n+1}^k) 
    + \sum_{i=1}^{\alpha} (-1)^{i+\alpha} \chi_{n,\, -ik+j} \, s_{\alpha -i} (t_1^k,\, \dots, \, t_{n+1}^k) \right) t^{\alpha} 
    - \chi_{n,\, j} \lambda_{n+1}(k) \notag \\
  &= \chi_{n,\, j} + \sum_{\alpha =1}^n \left( \sum_{i=0}^{\alpha} 
    (-1)^{i+\alpha} \chi_{n,\, -ik+j} \, s_{\alpha -i} (t_1^k,\, \dots, \, t_{n+1}^k) \right) t^{\alpha} 
    - \chi_{n,\, j} \lambda_{n+1}(k) \notag \\
  &= \chi_{n,\, j} + \sum_{\alpha =1}^n \left( \sum_{\beta =0}^{\alpha} 
    (-1)^{\beta} \chi_{n,\, (\beta - \alpha )k + j} \, s_{\beta} (t_1^k,\, \dots, \, t_{n+1}^k) \right) t^{\alpha} 
    - \chi_{n,\, j} \lambda_{n+1}(k) \notag \\
  &= \sum_{\alpha =0}^n \left( \sum_{\beta =0}^{\alpha} 
    (-1)^{\beta} \chi_{n,\, (\beta - \alpha )k + j} \, s_{\beta} (t_1^k,\, \dots, \, t_{n+1}^k) \right) t^{\alpha} 
    - \chi_{n,\, j} \lambda_{n+1}(k) \notag \\
  &= B_{n,\, k}^j (t_1,\, \dots, \, t_{n+1}, t) - \chi_{n,\, j} \lambda_{n+1}(k) . \label{eq3inB}
\end{align}
Thus from (\ref{eq1inB}), (\ref{eq2inB}) and (\ref{eq3inB}), we have
\begin{align*}
  \sum_{m>0} \chi_{n,\, -km+j} \, t^m 
  &= -B_{n,\, -k}^j(t_1,\, \dots, \, t_{n+1}, s) \, J_{\infty}(\lambda_{n+1}(-k)) (t_1,\, \dots, \, t_{n+1}, s) \\
  &= -B_{n,\, -k}^j(t_1,\, \dots, \, t_{n+1}, s) \, (-1)^{n+1} t_1^k \dots t_{n+1}^k \, t^{n+1} \, 
    J_0(\lambda_{n+1}(k)) (t_1,\, \dots, \, t_{n+1}, t) \\
  &= \left( B_{n,\, k}^j (t_1,\, \dots, \, t_{n+1}, t) - \chi_{n,\, j} \lambda_{n+1}(k) \right) \, 
    J_0(\lambda_{n+1}(k)) (t_1,\, \dots, \, t_{n+1}, t) \\
  &= B_{n,\, k}^j (t_1,\, \dots, \, t_{n+1}, t) \, J_0(\lambda_{n+1}(k)) (t_1,\, \dots, \, t_{n+1}, t) - \chi_{n,\, j}.
\end{align*}


Similarly using the relation
\[
  \left[ \frac{1}{1-az} \right]^{z=0} = -a^{-1}z^{-1} \left[ \frac{1}{1-a^{-1}z^{-1}} \right]^{z^{-1}=\infty} 
    \quad (a \in \mathbb{C} \setminus \{0\})
\]
and (\ref{eq3inB}), we obtain the following:
\begin{align*}
  \sum_{m \leq 0} \chi_{n,\, -km+j} \, t^m &= \sum_{\tilde{m} \geq 0} \chi_{n,\, k\tilde{m} +j} \, s^{\tilde{m}} \\
  &= B_{n,\, -k}^j(t_1,\, \dots, \, t_{n+1}, s) \, J_0(\lambda_{n+1}(-k)) (t_1,\, \dots, \, t_{n+1}, s) \\
  &= B_{n,\, -k}^j(t_1,\, \dots, \, t_{n+1}, s) \, (-1)^{n+1} t_1^k \dots t_{n+1}^k \, t^{n+1} \, 
    J_{\infty}(\lambda_{n+1}(k)) (t_1,\, \dots, \, t_{n+1}, t) \\
  &= \left( -B_{n,\, k}^j (t_1,\, \dots, \, t_{n+1}, t) + \chi_{n,\, j} \lambda_{n+1}(k) \right) \, 
    J_{\infty}(\lambda_{n+1}(k)) (t_1,\, \dots, \, t_{n+1}, t) \\
  &= -B_{n,\, k}^j (t_1,\, \dots, \, t_{n+1}, t) \, J_{\infty}(\lambda_{n+1}(k)) (t_1,\, \dots, \, t_{n+1}, t) + \chi_{n,\, j}.
\end{align*}
Thus we get the required equations.
\end{proof}


Our main theorem immediately follows from the above proposition.

\begin{proof}[Proof of Theorem \ref{mainthm}]
Let $n \geq 1$,\, $k \in \mathbb{Z}$. Then, from (\ref{key3}) and Proposition \ref{B}, we verify the following:
\begin{align*}
  \ind_{T^{n+1} \times S^1} \left( p_k^* [D_n] \right)
  &= \sum_{m \in \mathbb{Z}} \chi_{n,\, -km} \, t^m \\
  &= \left( \sum_{l=0}^n \left( \sum_{i=1}^{n+1-l} 
    (-1)^{i+l+1} \chi_{n,\, ik} \, s_{i+l}(t_1^k,\,\dots, \, t_{n+1}^k)\right) t^l \right)J(\lambda_{n+1}(k)) \\
  &= \left( \sum_{l=0}^n \left( \sum_{\alpha =0}^{l} 
    (-1)^{\alpha} \chi_{n,\, (\alpha -l)k} \, s_{\alpha}(t_1^k,\,\dots, \, t_{n+1}^k)\right) t^l \right)J(\lambda_{n+1}(k)) .
\end{align*}
The last equality follows from Remark \ref{anoB}.
\end{proof}

%
%

\section{Application}

In this section, let us give an application of our main theorem.
Firstly, recall Lefschetz formula (see \cite{lef1} for details):

\begin{thm}[Lefschetz formula for group actions]
Let $M$ be a compact manifold on which a compact Lie group $G$ acts smoothly. For a $G$-invariant elliptic operator $A \colon \Gamma(M ,\, \mathcal{E}^+) \to \Gamma(M ,\, \mathcal{E}^-)$, the following equation holds
\[
  \ind_G \left( [ \sigma(A) ] \right) (g) = \sum_{p \in M^g} 
    \frac{ \chi \left( \mathcal{E}^+|_{p} \right)(g) - \chi \left( \mathcal{E}^-|_{p} \right)(g) }{ \text{det} 
      \left( 1 - g|_{T_pM} \right) } \qquad \in \mathbb{C}
\]
for $g \in G$ such that the fixed point set $M^g := \{ x \in M \left| \, g \cdot x = x \right. \}$ is finite.
\end{thm}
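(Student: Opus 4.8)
The plan is to identify the left-hand side with the equivariant Lefschetz number of $A$, regarded as a two-term elliptic complex $\Gamma(M,\mathcal{E}^+) \xrightarrow{A} \Gamma(M,\mathcal{E}^-)$, and then to localize it to the fixed-point set $M^g$ by a heat-kernel argument. First I would fix $G$-invariant metrics on $M$ and on $\mathcal{E}^\pm$ and form the Laplacians $\Delta^+ = A^*A$ and $\Delta^- = AA^*$. Since $g$ commutes with $A$ (the operator is $G$-invariant), it preserves the eigenspaces of $\Delta^\pm$, and $A$ gives a $G$-equivariant isomorphism between the nonzero spectra of $\Delta^+$ and $\Delta^-$. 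Hence by the McKean--Singer principle the super heat-trace
\[
  \mathrm{Str}\bigl(g\, e^{-t\Delta}\bigr) := \mathrm{Tr}\bigl(g\, e^{-t\Delta^+}\bigr) - \mathrm{Tr}\bigl(g\, e^{-t\Delta^-}\bigr)
\]
is independent of $t>0$ and equals $\ind_G([\sigma(A)])(g) = \chi(\knl A)(g) - \chi(\knl A^*)(g)$ for every $t$. This reduces the theorem to computing the small-time limit of $\mathrm{Str}(g\,e^{-t\Delta})$.

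Next I would write each heat trace as the integral over $M$ of the pointwise trace of the $g$-twisted heat kernel,
\[
  \mathrm{Tr}\bigl(g\, e^{-t\Delta^\pm}\bigr) = \int_M \mathrm{tr}\Bigl( g_{\mathcal{E}^\pm}\cdot K_t^\pm\bigl(g^{-1}x,\, x\bigr)\Bigr)\, dx,
\]
and let $t \to 0^+$. Off a neighbourhood of $M^g$ one has $d(g^{-1}x, x) \geq c > 0$, so by the Gaussian off-diagonal decay of the heat kernel these contributions are $O(e^{-c/t})$ and vanish in the limit; the whole expression therefore localizes to arbitrarily small neighbourhoods of the (finitely many, hence isolated) fixed points $p \in M^g$.

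The technical heart is the local computation at each $p$. Here I would choose $g$-adapted normal coordinates identifying a neighbourhood of $p$ with a neighbourhood of $0$ in $T_pM$ on which $g$ acts, to leading order, by the linear map $dg_p$; the isolatedness of $p$ is exactly the condition $\det(1 - dg_p) \neq 0$. Replacing $\Delta^\pm$ by their constant-coefficient model operators on $T_pM$ and $K_t^\pm$ by the corresponding Gaussian model kernel, the $t\to 0$ contribution of $p$ becomes an explicit Gaussian integral of the form $\int_{T_pM} e^{-|v - dg_p^{-1}v|^2/4t}(\cdots)\,dv$. Evaluating it yields the reciprocal Jacobian factor $\det(1 - dg_p)^{-1}$ --- the denominator $\det(1 - g|_{T_pM})$ of the statement --- while the alternating sum of the symbol contributions collapses, because the principal symbol sequence of an elliptic complex is exact away from the zero section, so the fibrewise Euler characteristic reduces the numerator to the plain fibre trace $\mathrm{tr}(g|\mathcal{E}^+_p) - \mathrm{tr}(g|\mathcal{E}^-_p) = \chi(\mathcal{E}^+|_p)(g) - \chi(\mathcal{E}^-|_p)(g)$. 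Summing these local contributions over $p \in M^g$ gives the stated formula.

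I expect this last step --- justifying the replacement of the true operator and heat kernel by the linearized model and extracting precisely the ratio above, uniformly as $t \to 0$ --- to be the main obstacle. An alternative that sidesteps the delicate analysis is to invoke the equivariant Atiyah--Singer index theorem to express $\ind_G([\sigma(A)])(g)$ as an integral of an equivariant characteristic form, and then to apply the Atiyah--Bott--Berline--Vergne localization formula, whose isolated-fixed-point contributions are exactly the summands above; this is the route I would take in practice, citing the classical references rather than reproving the localization.
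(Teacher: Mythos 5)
The paper does not actually prove this theorem: it is recalled as a classical result with a citation to Atiyah--Bott \cite{lef1}, so your fallback option --- invoking the classical references --- is precisely what the paper does. Your heat-kernel sketch is nevertheless a correct outline of the standard analytic proof (the one in Berline--Getzler--Vergne rather than Atiyah--Bott's original argument): the McKean--Singer identity for $\mathrm{Str}(g\,e^{-t\Delta})$, Gaussian off-diagonal decay to localize at $M^g$, and the model computation at each isolated fixed point are all the right ingredients. Three small points. First, the Gaussian integral naturally produces $|\det(1-dg_p)|^{-1}$; this agrees with the stated $\det(1-g|_{T_pM})^{-1}$ only because $g$ acts by isometries, so $dg_p$ is orthogonal with no eigenvalue $1$ and hence $\det(1-dg_p)>0$ --- worth saying explicitly, since for a general diffeomorphism the Atiyah--Bott formula carries the absolute value. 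Second, the appeal to exactness of the symbol sequence is not needed for a two-term complex: the numerator $\chi(\mathcal{E}^+|_p)(g)-\chi(\mathcal{E}^-|_p)(g)$ comes directly from the supertrace of the leading heat coefficient $\Phi_0(p,p)=\mathrm{Id}$ twisted by the bundle action of $g$. Third, as you acknowledge, the uniform justification of replacing the true heat kernel by the linearized model as $t\to 0^+$ is the genuine analytic content and is only sketched; since the paper itself supplies no proof, completing that step or citing \cite{lef1} (or \cite{heatkernel}) are both acceptable, and the latter matches the paper's treatment.
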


\vspace{3mm}

In particular, if we consider the Dolbeault complex, the above formula can be deformed as follows: 
Let $M$ be a compact complex manifold provided with a holomorphic action of a compact Lie group $G$ and $\displaystyle \mathcal{E}^+ := \bigoplus_{q : even} \bigwedge\nolimits^q \overline{T_{\mathbb{C}}^*M}$, $\displaystyle \mathcal{E}^- := \bigoplus_{q : odd} \bigwedge\nolimits^q \overline{T_{\mathbb{C}}^*M}$ be two $G$-equivariant vector bundles over $M$, where $T_{\mathbb{C}}^*M$ is the holomorphic cotangent bundle of $M$.
We assume that the Dirac operator $D := \bar{\partial} + \bar{\partial}^*$ associated with the Dolbeault operator on $M$ is $G$-invariant. In addition, let $[ E^0 ] - [ E^1 ] \in K_{G}(M)$.
Then since we have that
\[
  \chi \left( \mathcal{E}^+|_{p} \otimes E^i|_p \right)(g) - \chi \left( \mathcal{E}^-|_{p} \otimes E^i|_p  \right)(g) = 
    \chi \left( E^i|_p \right)(g) \; \text{det}_{\mathbb{C}} (1 - g|_{\overline{T_{\mathbb{C}}^*M} |_p }) 
\]
and
\[
  \text{det}_{\mathbb{R}} ( 1 - g|_{ TM |_p} ) = \text{det}_{\mathbb{C}} ( 1 - g|_{T_{\mathbb{C}} M |_p} ) 
    \text{det}_{\mathbb{C}} ( 1 - g|_{ \overline{T_{\mathbb{C}} M} |_p} )
\]
for each fixed point $p \in M^g$,
it follows from Lefschetz formula that
\begin{align*}
  \ind_G \left( [ D ] \otimes \left( [ E^0 ] - [ E^1 ] \right) \right) (g)
  &= \ind_G \left( [ D ] \otimes [ E^0 ] \right) (g) - \ind_G \left( [ D ] \otimes [ E^1 ] \right) (g) \\
  &= \sum_{p \in M^g} \frac{ \chi \left( E^0|_p \right)(g) - \chi \left( E^1|_p \right)(g) }{ 
    \text{det}_{\mathbb{C}} ( 1 - g|_{ \overline{T_{\mathbb{C}} M} |_p} )}
\end{align*}
for $g \in G$ such that $M^g$ is finite.

As a special case, we consider $\mathbb{CP}^n$ on which $T^{n+1}$ acts canonically (as in the previous sections) and the Dirac operator $D_n$ associated with the Dolbeault operator.
Then we have the following:

For $(t_1 ,\, \dots ,\, t_{n+1}) \in T^{n+1}$ such that $t_i \neq t_j (i \neq j)$,  
\begin{equation} \label{lefcp}
  \ind_{T^{n+1}} \left( [ D_n ] \otimes \left( [E^0] - [E^1] \right) \right) (t_1,\, \dots ,\, t_{n + 1}) = 
    \sum_{i=1}^{n+1} \frac{ \chi ({E^0|_{p_i}} ) - \chi( {E^1|_{p_i}} )}{\displaystyle \prod_{j \neq i} \left( 1-t_j^{-1}t_i \right)} 
      \qquad \in \mathbb{C}
\end{equation}
for any $T^{n+1}$-equivariant complex vector bundles $E^{i} (i =0,\,1)$ over $\mathbb{CP}^n$, where $p_i$'s are the fixed points $[ 0 : \dots : 1 : \dots : 0] \in \mathbb{CP}^n (1 \leq i \leq n+1)$.

\vspace{0.5cm}

We derive the above equation $(\ref{lefcp})$ by using our main theorem.
In this section, we regard the transversal index of an element in $K_{T^{n+1} \times S^1}(T_{S^1}^* S(\mathcal{O}(k)))$ as a continuous linear map $\mathcal{D}(S^1) \to \mathcal{D}(T^{n+1})$ (see Remark \ref{spform}).
Analogously, $J_{0}(\lambda_{n+1}(k))$, $J_{\infty}(\lambda_{n+1}(k))$ and $J(\lambda_{n+1}(k))$ are regarded as linear maps $\mathcal{D}(S^1) \to \mathcal{D}(T^{n+1})$ (see Remark \ref{lamandJ}).

We denote by $\homo^{C^0}\left( \mathcal{D}(S^1) ,\, \mathcal{D}(T^{n+1}) \right)$ the space of continuous linear maps $\mathcal{D}(S^1) \to \mathcal{D}(T^{n+1})$.
We define an action of the representation ring $R(T^{n+1} \times S^1)$ on $\homo^{C^0}\left( \mathcal{D}(S^1) ,\, \mathcal{D}(T^{n+1}) \right)$ as follows:
\[
  \left( \left( fg \right) \Phi \right) (\varphi) := f \Phi \left( g \varphi \right) \qquad (\varphi \in \mathcal{D}(S^1))
\]
for $f \in R(T^{n+1})$, $g \in R(S^1)$ and $\Phi \in \homo^{C^0}\left( \mathcal{D}(S^1) ,\, \mathcal{D}(T^{n+1}) \right)$.
Then it turns out that 
\[
  \ind_{T^{n+1} \times S^1} \colon K_{T^{n+1} \times S^1}(T_{S^1}^* S(\mathcal{O}(k))) \to 
    \homo^{C^0}\left( \mathcal{D}(S^1) ,\, \mathcal{D}(T^{n+1}) \right)
\]
is an $R(T^{n+1} \times S^1)$-module homomorphism (see Remark \ref{module} and \ref{spform}).

\begin{lem} \label{residue}
For any analytic function $\varphi$ on $S^1$, 
\[
  J(\lambda_{n+1}(k))(\varphi) = 
    - \sum_{t = \xi \in S^1} \res \left( \frac{\varphi(t)}{\lambda_{n+1}(k)} \frac{1}{t} \;\; ; \, \xi \right) 
    \qquad \in \mathcal{D}(T^{n+1}) .
\]
Additionally, for any $f = f(t_1,\, \dots ,\, t_{n+1} ,\, t) \in R(T^{n + 1} \times S^1)$,
\[
  \left( f J(\lambda_{n+1}(k)) \right) (\varphi) = - \sum_{t = \xi \in S^1} 
    \res \left( \frac{ f(t_1,\, \dots ,\, t_{n+1} ,\, t) \varphi(t)}{\lambda_{n+1}(k)} \frac{1}{t} \;\; ; \, \xi \right) 
      \qquad \in \mathcal{D}(T^{n+1}) ,
\]
where $\res (\phi \; ; \xi)$ is the residue of $\phi$ at $\xi$.
\end{lem}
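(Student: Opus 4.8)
The plan is to prove the scalar identity (the case $f=1$) by a contour-deformation argument in the variable $t$, and then to deduce the general identity for $f\in R(T^{n+1}\times S^1)$ from it using the $R(T^{n+1}\times S^1)$-module structure of the index map described just before the statement.

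First I would reinterpret the two ``half-distributions'' $J_0(\lambda_{n+1}(k))$ and $J_\infty(\lambda_{n+1}(k))$ as the continuous linear maps $\mathcal{D}(S^1)\to\mathcal{D}(T^{n+1})$ of Remark \ref{spform}. Writing the Laurent expansions of $1/\lambda_{n+1}(k)$ at $t=0$ and $t=\infty$ as $J_0(\lambda_{n+1}(k))=\sum_{m\geq 0}a_m(t_1,\dots,t_{n+1})\,t^m$ and $J_\infty(\lambda_{n+1}(k))=\sum_{m\leq -(n+1)}b_m(t_1,\dots,t_{n+1})\,t^m$, the induced action on an analytic $\varphi$ is $J_0(\varphi)=\sum_{m\geq 0}a_m\,\hat{\varphi}_{-m}$ and $J_\infty(\varphi)=\sum_{m}b_m\,\hat{\varphi}_{-m}$, where $\hat{\varphi}_{-m}=\frac{1}{2\pi i}\oint_{|t|=r}\varphi(t)\,t^{m-1}\,dt$ is the Fourier coefficient, computed on any circle $|t|=r$ lying in the annulus on which $\varphi$ is holomorphic.

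The key step is to turn each of these series into a single contour integral. Since $|t_j|=1$, all poles of $1/\lambda_{n+1}(k)$ sit at $t=t_j^{-k}\in S^1$; hence the expansion at $t=0$ converges uniformly on any circle $|t|=\rho$ with $\rho<1$, and the expansion at $\infty$ converges uniformly on any $|t|=\rho'$ with $\rho'>1$. Choosing $\rho<1<\rho'$ inside the annulus of analyticity of $\varphi$ and interchanging summation with integration (justified by this uniform convergence together with the rapid decay of $\hat{\varphi}_{-m}$), I obtain
\[
  J_0(\lambda_{n+1}(k))(\varphi)=\frac{1}{2\pi i}\oint_{|t|=\rho}\frac{\varphi(t)}{\lambda_{n+1}(k)}\frac{dt}{t},\qquad
  J_\infty(\lambda_{n+1}(k))(\varphi)=\frac{1}{2\pi i}\oint_{|t|=\rho'}\frac{\varphi(t)}{\lambda_{n+1}(k)}\frac{dt}{t}.
\]
Subtracting and applying the residue theorem to the annulus $\rho<|t|<\rho'$, whose only singularities of the integrand are the poles of $1/\lambda_{n+1}(k)$ on $S^1$ (the pole of $1/t$ at $t=0$ lies inside $|t|=\rho$ and contributes nothing), gives precisely $J(\lambda_{n+1}(k))(\varphi)=-\sum_{t=\xi\in S^1}\res\!\left(\frac{\varphi(t)}{\lambda_{n+1}(k)}\frac{1}{t};\,\xi\right)$. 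The same representation shows the right-hand side is a smooth function of $(t_1,\dots,t_{n+1})$, since the contours $|t|=\rho$ and $|t|=\rho'$ avoid every pole uniformly in $(t_1,\dots,t_{n+1})\in T^{n+1}$; coincidences among the $t_j^k$ only turn simple poles into higher-order ones and do not affect the argument. For the statement with general $f$ I would write $f=\sum_s f_s(t_1,\dots,t_{n+1})\,t^s$ with $f_s\in R(T^{n+1})$, use the module action $(fJ)(\varphi)=\sum_s f_s\,J(t^s\varphi)$, apply the scalar case to each analytic function $t^s\varphi$, pull the $t$-independent factors $f_s$ inside the residue, and resum $\sum_s f_s t^s=f$.

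The main obstacle is not the residue bookkeeping but the analytic justification in the key step: one must legitimately represent the formal Fourier series $J_0(\varphi)$ and $J_\infty(\varphi)$ as contour integrals on circles displaced off $S^1$. This is exactly where the hypothesis that $\varphi$ be \emph{analytic}, rather than merely smooth, is essential, since it permits moving the contour into the region of uniform convergence of the relevant expansion, on the far side of the poles, so that the residue theorem applies. Care is also needed to fix the Fourier/character sign convention so that the displacement direction ($\rho<1$ for $J_0$, $\rho'>1$ for $J_\infty$) is matched to the expansions at $0$ and at $\infty$; the cleanest sanity check is the rank-one case $\lambda_1(k)=1-t_1^kt$, in which both sides collapse to $\varphi(t_1^{-k})$, in agreement with the delta-function interpretation noted in the introduction.
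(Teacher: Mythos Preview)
Your proposal is correct and follows essentially the same approach as the paper: represent the Laurent coefficients of $J_0$ and $J_\infty$ via contour integrals over circles of radius $1-\epsilon$ and $1+\epsilon$, pair with the Fourier expansion of $\varphi$, interchange sum and integral to obtain $\frac{1}{2\pi i}\int_{\Gamma_{1-\epsilon}-\Gamma_{1+\epsilon}}\frac{\varphi(t)}{\lambda_{n+1}(k)}\frac{dt}{t}$, and apply the residue theorem; then deduce the case of general $f$ from the module action. Your write-up is in fact more careful than the paper's about justifying the interchange and explaining why analyticity of $\varphi$ is needed.
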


\begin{proof}
By definition, $J(\lambda_{n+1}(k)) = J_{0}(\lambda_{n+1}(k)) - J_{\infty}(\lambda_{n+1}(k))$ and $J_{0}(\lambda_{n+1}(k))$ , $J_{\infty}(\lambda_{n+1}(k))$ are the Laurent expansions of $\frac{1}{\lambda_{n+1}(k)}$ at $t = 0$ and $t = \infty$, so these can be written by the following forms:
\begin{align*}
  J_{0}(\lambda_{n+1}(k)) = \sum_{m \in \mathbb{Z}} c_m^0 t^m , &\qquad c_m^0 := 
    \frac{1}{2\pi i} \int_{\Gamma_{1 - \epsilon}} \frac{t^{-m}}{\lambda_{n+1}(k)} \frac{dt}{t} , \\
  J_{\infty}(\lambda_{n+1}(k)) = \sum_{m \in \mathbb{Z}} c_m^{\infty} t^m , &\qquad c_m^{\infty} := 
    \frac{1}{2\pi i} \int_{\Gamma_{1 + \epsilon}} \frac{t^{-m}}{\lambda_{n+1}(k)} \frac{dt}{t} ,
\end{align*}
where $0 < \epsilon < 1$ and $\Gamma_{r}$ is the circle centered at the origin with radius $r >0$.
Since $\varphi$ is analytic, if we write $\displaystyle \varphi = \sum_{m \in \mathbb{Z}} \varphi_m t^m$ using the Fourier expansion, then 
\begin{align*}
  J(\lambda_{n+1}(k)) (\varphi) &= \sum_{m \in \mathbb{Z}} \left( c_{-m}^0 - c_{-m}^{\infty} \right) \varphi_m \\
  &= \sum_{m \in \mathbb{Z}} \frac{1}{2\pi i} \int_{\Gamma_{1 - \epsilon} - \Gamma_{1 + \epsilon}} 
    \frac{\varphi_m t^m}{\lambda_{n+1}(k)} \frac{dt}{t} \\
  &= \frac{1}{2\pi i} \int_{\Gamma_{1 - \epsilon} - \Gamma_{1 + \epsilon}} \frac{\varphi(t)}{\lambda_{n+1}(k)} \frac{dt}{t} \\
  &= - \sum_{t = \xi \in S^1} \res \left( \frac{\varphi(t)}{\lambda_{n+1}(k)} \frac{1}{t}  \;\;  ; \; \xi \right) .
\end{align*}
Moreover any element in $R(T^{n+1} \times S^1)$ can be written in the form $\displaystyle f = \sum_{l \in \mathbb{Z}} a_l t^l$ $( a_l \in R(T^{n+1}) )$, where $a_l = 0$ for all but finitely many $l \in \mathbb{Z}$. Then by definition and the above computation, we have
\begin{align*}
  \left( f J(\lambda_{n+1}(k)) \right) (\varphi) &= \sum_{l \in \mathbb{Z}} a_l \, J(\lambda_{n+1}(k))  (t^l \varphi) \\
  &= -\sum_{l \in \mathbb{Z}} a_l \sum_{t = \xi \in S^1} 
    \res \left( \frac{ t^l \varphi(t)}{\lambda_{n+1}(k)} \frac{1}{t}  \;\;  ; \; \xi \right) \\
  &= - \sum_{t = \xi \in S^1} \res \left( \frac{ 
    \left( \sum_{l} a_l t^l \right) \, \varphi(t)}{\lambda_{n+1}(k)} \frac{1}{t}  \;\;  ; \; \xi \right) .
\end{align*}
\end{proof}

From now on, we set $k = -1$. 
We simply denote by $p$ the projection $p_{-1} \colon S(\mathcal{O}(-1)) \to \mathbb{CP}^n$.
Note that since $\mathcal{O}(-1)$ is a subbundle of $\mathbb{CP}^n \times \mathbb{C}^{n+1}$, the projection $\mathbb{CP}^n \times \mathbb{C}^{n+1} \to \mathbb{C}^{n+1}$ induces the natural $T^{n+1} \times S^1$-equivariant isomorphism $S(\mathcal{O}(-1)) \xrightarrow{\cong} S^{2n+1}$ as follows:
\[
  \left( [ z_1 : \dots : z_{n+1} ] ,\, (v_1 ,\, \dots ,\, v_{n+1}) \right) \mapsto \left( v_1 ,\, \dots ,\, v_{n+1} \right) .
\]
Recall that the action of $T^{n+1} \times S^1$ on $S(\mathcal{O}(-1)) \cong S^{2n+1} \subset \mathbb{C}^{n+1}$ is given by 
\[
  (z_1 ,\, \dots ,\, z_{n+1}) \mapsto (u_1u^{-1} z_1 ,\, \dots ,\, u_{n+1}u^{-1} z_{n+1})
\]
for $(z_1 ,\, \dots ,\, z_{n+1}) \in S^{2n+1}$ and $(u_1 ,\, \dots ,\, u_{n+1} ,\, u) \in T^{n+1} \times S^1$.

Next we compute $K$-groups. See \cite{Ktheory}, \cite{karoubi}.

\begin{lem} \label{ringisom}
There exist the following ring isomorphisms:
\begin{enumerate}
  \item \label{free}
    $\begin{aligned}[t]
      p^* \colon K_{T^{n+1}}(\mathbb{CP}^n) \xrightarrow{\cong} K_{T^{n+1} \times S^1}(S(\mathcal{O}(-1))) \quad \text{and}
    \end{aligned}$
  \item
    $\begin{aligned}[t] \label{repandK}
      \beta_0 \colon R(T^{n+1} \times S^1) / ( \lambda_{n+1}(-1) ) \xrightarrow{\cong} 
        K_{T^{n+1} \times S^1}(S(\mathcal{O}(-1))) ,
    \end{aligned}$
\end{enumerate}
where $( \lambda_{n+1}(-1) )$ is an ideal of $R(T^{n+1} \times S^1)$ generated by $\lambda_{n+1}(-1)$.
\end{lem}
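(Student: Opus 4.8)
The plan is to prove the two isomorphisms by separate arguments: the first is an instance of the free action property, while the second is the standard computation of the equivariant $K$-theory of a representation sphere via a cofiber sequence and the Thom isomorphism.

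For the first isomorphism, I would invoke descent along the free quotient. The $S^1$-action on $S(\mathcal{O}(-1))$ is free and commutes with the $T^{n+1}$-action, and $\mathbb{CP}^n = S(\mathcal{O}(-1))/S^1$. Pullback $E \mapsto p^*E$ sets up an equivalence between $T^{n+1}$-equivariant vector bundles on $\mathbb{CP}^n$ and $T^{n+1}\times S^1$-equivariant vector bundles on $S(\mathcal{O}(-1))$, the inverse being $F \mapsto F/S^1$; this is the base-space analogue of the isomorphisms already recalled in Section~\ref{freeaction}. Passing to Grothendieck groups yields the asserted isomorphism $p^*$, and since pullback preserves tensor products and the trivial bundle, $p^*$ is a ring homomorphism, hence a ring isomorphism.

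For the second isomorphism, I would use the $T^{n+1}\times S^1$-equivariant identification $S(\mathcal{O}(-1)) \cong S^{2n+1} = S(V)$ recorded just above the lemma, where $V = \mathbb{C}^{n+1}$ carries the action $z_j \mapsto u_j u^{-1} z_j$, i.e. $V = \sum_{j} t_j t^{-1}$ in $R(T^{n+1}\times S^1)$. Writing $B(V)$ for the unit ball, which is equivariantly contractible so that $K_{T^{n+1}\times S^1}(B(V)) \cong R(T^{n+1}\times S^1)$, the six-term exact sequence of the pair $(B(V),S(V))$ gives
\begin{equation*}
  R(T^{n+1}\times S^1) \xrightarrow{\,e(V)\,} R(T^{n+1}\times S^1) \xrightarrow{\,i^*\,} K_{T^{n+1}\times S^1}(S(V)) \to 0 ,
\end{equation*}
together with $K^1_{T^{n+1}\times S^1}(S(V)) \cong \ker\bigl(e(V)\colon R \to R\bigr)$. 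Here I have used the equivariant Thom isomorphism to identify the relative group $K_{T^{n+1}\times S^1}(B(V),S(V)) \cong R(T^{n+1}\times S^1)$, and the standard fact that the forgetful map to $K_{T^{n+1}\times S^1}(B(V))$ becomes multiplication by the $K$-theoretic Euler class $e(V) = \lambda_{-1}(V^*) = \prod_{j}(1 - t_j^{-1} t) = \lambda_{n+1}(-1)$.

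The decisive algebraic point is that $R(T^{n+1}\times S^1) \cong \mathbb{Z}[t_1^{\pm 1},\dots,t_{n+1}^{\pm 1}, t^{\pm 1}]$ is an integral domain, so the nonzero element $\lambda_{n+1}(-1)$ is a non-zero-divisor; hence multiplication by $e(V)$ is injective, the odd group vanishes, and the sequence collapses to the isomorphism $\beta_0 \colon R(T^{n+1}\times S^1)/(\lambda_{n+1}(-1)) \xrightarrow{\cong} K_{T^{n+1}\times S^1}(S(\mathcal{O}(-1)))$ induced by $i^*$. Since $i^*$ is restriction of classes of equivariant bundles and sends the unit to the unit, $\beta_0$ is a ring homomorphism, hence a ring isomorphism. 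I expect the delicate part to be the bookkeeping in this second isomorphism: pinning down that the Thom/forgetful map is exactly multiplication by $\lambda_{n+1}(-1)$ with the correct weights (the dual convention matters for the precise generator, though $\lambda_{-1}(V)$ and $\lambda_{-1}(V^*)$ generate the same ideal because $\prod_j t_j t^{-1}$ is a unit), and confirming the non-zero-divisor property that forces the $K^1$ term to vanish.
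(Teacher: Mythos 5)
Your proposal is correct and follows essentially the same route as the paper: free-action descent ($p^*$ with inverse given by quotienting by $S^1$) for the first isomorphism, and the six-term exact sequence of the pair $(B^{2n+2}, S^{2n+1})$ combined with the equivariant Thom isomorphism, with the Thom class restricting to $\lambda_{-1}(V^*) = \lambda_{n+1}(-1)$, for the second. The only cosmetic differences are that you add the (true but unnecessary for this lemma) integral-domain observation to kill $K^1(S(V))$, while the surjectivity of $i^*$ — the ``$\to 0$'' in your display — should be justified by explicitly noting $K^1_{T^{n+1}\times S^1}(B^{2n+2}, S^{2n+1}) \cong K^1_{T^{n+1}\times S^1}(pt) = 0$, exactly as the paper does via the same Thom isomorphism.
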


\begin{proof}
Since the action of $S^1$ on $S(\mathcal{O}(-1))$ is free, there exists the above isomorphism.
More precisely, the pull-back of $p$ induces the above ring homomorphism and its inverse map is given by taking the quotient by the action of $S^1$.

Let us regard $V := \mathbb{C}^{n+1}$ a representation space of $T^{n+1} \times S^1$ on which the action is the natural extension of one on $S^{2n+1} = S(\mathcal{O}(-1))$, and $B^{2n+2}$ be the closed unit ball of $V$.
We consider the following exact sequence for the pair $(B^{2n+2} ,\, S^{2n+1})$: 
\[
  \begin{CD}
    K_{T^{n+1} \times S^1}^0 (B^{2n+2} ,\, S^{2n+1})    @>>>   K_{T^{n+1} \times S^1}^0 (B^{2n+2})   
      @>\tilde{\beta}_0>>   K_{T^{n+1} \times S^1}^0 (S^{2n+1})   \\
    @AAA       @.     @VVV   \\
    K_{T^{n+1} \times S^1}^1 (S^{2n+1})   @<<<   K_{T^{n+1} \times S^1}^1 (B^{2n+2})   @<<<   
      K_{T^{n+1} \times S^1}^1 (B^{2n+2} ,\, S^{2n+1})
  \end{CD}
\]
Here, since $B^{2n+2}$ is $T^{n + 1}$-equivariant contractible, we have that 
\[
  K_{T^{n+1} \times S^1}^0 (B^{2n+2}) \cong K_{T^{n+1} \times S^1}^0 (pt) \cong R(T^{n+1} \times S^1) .
\]
In addition, using Thom isomorphim, we obtain the followings
\begin{align*}
  &K_{T^{n+1} \times S^1}^1 (B^{2n+2} ,\, S^{2n+1}) \cong K_{T^{n+1} \times S^1}^1 ( V ) \cong 
    K_{T^{n+1} \times S^1}^1 ( pt ) \cong 0 , \\[7pt]
  &K_{T^{n+1} \times S^1}^0 (B^{2n+2} ,\, S^{2n+1}) \cong K_{T^{n+1} \times S^1}^0 ( V ) \cong 
    K_{T^{n+1} \times S^1}^0 (pt) \cdot \lambda_{V} \cong R(T^{n+1} \times S^1) \lambda_{V} ,
\end{align*}
where $\lambda_{V} \in K_{T^{n+1} \times S^1}^0 (V)$ is the Thom class of $V \to pt$. Then, since the ristriction of $\lambda_{V}$ to $pt$ is $\displaystyle \sum_{i = 0}^{n + 1} \left( -1 \right)^i \left[ \bigwedge\nolimits^i V^* \right] \in K_{T^{n + 1}}(pt)$, its character is equal to $\lambda_{n+1}(-1)$.
Thus $\tilde{\beta}_0$ induces the required ring isomorphism $\beta_0$.
\end{proof}

\begin{rem} \label{beta}
\begin{enumerate}
  \item \label{RImodule}
It immediately follows from this Lemma that $\lambda_{n+1}(-1) \cdot K_{T^{n+1} \times S^1} \left( S(\mathcal{O}(-1)) \right) = 0$. Moreover since $K_{T^{n+1} \times S^1} (T_{S^1}^* S(\mathcal{O}(-1)))$ is a module over $K_{T^{n+1} \times S^1} \left( S(\mathcal{O}(-1)) \right)$, we have $\lambda_{n+1}(-1) \cdot K_{T^{n+1} \times S^1} (T_{S^1}^* S(\mathcal{O}(-1))) = 0$.
Then it turns out that the map 
\[
  \ind_{T^{n+1} \times S^1} \colon K_{T^{n+1} \times S^1} (T_{S^1}^* S(\mathcal{O}(-1))) \to N
\]
is a $R(T^{n+1} \times S^1) / ( \lambda_{n+1}(-1) )$-module homomorphism, where $N$ is the image of $\ind_{T^{n+1} \times S^1}$.
  \item We denote by $\beta$ the isomorphism given by the composition
\[
  \beta := \beta_0 \circ \left (p^* \right)^{-1} \colon R(T^{n+1} \times S^1) / ( \lambda_{n+1}(-1) ) 
    \xrightarrow{\cong} K_{T^{n+1}}(\mathbb{CP}^n)
\]
and also denote by $\alpha$ the inverse map of $\beta$:
\[
  \alpha := \beta^{-1} \colon K_{T^{n+1}}(\mathbb{CP}^n) \xrightarrow{\cong} R(T^{n+1} \times S^1) / ( \lambda_{n+1}(-1) ) .
\]
From the above proof, we can describe the isomorphism $\beta$ explicitly as follows:
For $[ V^0 ] - [ V^1 ] \bmod \lambda_{n+1}(-1)$,
\[
  \beta \left( [ V^0 ] - [ V^1 ] \bmod \lambda_{n+1}(-1) \right) = 
    \left[ S^{2n+1} \underset{S^1}{\times} V^0 \right] - \left[ S^{2n+1} \underset{S^1}{\times} V^1 \right] 
      \quad \in K_{T^{n+1}}(\mathbb{CP}^n) ,
\]
where $S^{2n+1} \underset{S^1}{\times} V^j := \left( S^{2n+1} \times V^j \right) / \sim$ and the equivalent relation is given by
\[
  ( v ,\, \xi ) \sim ( u^{-1} v ,\, u \cdot \xi )  \quad \text{for some} \quad u \in S^1 .
\]
\end{enumerate}
\end{rem}

Let $1 \leq i \leq n + 1$.
If we denote by $\iota_{p_i} \colon \left\{ p_i \right\} = \left\{ [ 0 : \dots : 1 : \dots : 0] \right\} \hookrightarrow \mathbb{CP}^n$ the inclusion map, then the following map
\[
  K_{T^{n+1}}(\mathbb{CP}^n) \xrightarrow{\iota_{p_i}^* } K_{T^{n+1}}(p_i) = R(T^{n+1}) 
    \xrightarrow[character]{\chi} \mathcal{D}(T^{n+1}) 
\]
is given by $[ E^0 ] - [ E^1 ] \mapsto \chi(E^0|_{p_i}) - \chi(E^1|_{p_i})$.

On the other hand, 
since we identify $R(T^{n + 1} \times S^1)$ and $R(T^{n + 1})$ with $\mathbb{Z} \left[ t_1^{\pm} ,\, \dots ,\, t_{n + 1}^{\pm} ,\, t^{\pm} \right]$ and $\mathbb{Z} \left[ t_1^{\pm} ,\, \dots ,\, t_{n + 1}^{\pm} \right]$ respectively,
the substitution $t = t_i$ induces the ring homomorphism 
\[
\cdot \; |_{t = t_i} \colon R(T^{n+1} \times S^1) \to R(T^{n + 1}) .
\]
In addition, this lifts to the ring homomorphism
\[
  \mu_i \colon R(T^{n+1} \times S^1) / ( \lambda_{n+1}(-1) ) \to R(T^{n + 1})
\]
because $\lambda_{n+1}(-1) |_{t = t_i} = 0$.  $\mu_i$ is also denoted by $\cdot \; |_{t = t_i}$.

Then we can identify $\iota_{p_i}^*$ with $\mu_i$ via the above ring isomorphism $\alpha$, that is,

\begin{lem} \label{fiberonfixpt}
There exists a commutative diagram of rings:
\[
  \begin{CD}
    K_{T^{n+1}}(\mathbb{CP}^n) @>{\alpha}>{\cong}> R(T^{n+1} \times S^1) / ( \lambda_{n+1}(-1) ) \\
    @VV{\iota_{p_i}^*}V                            @VV{\cdot |_{t = t_i}}V                               \\
    K_{T^{n + 1}}(p_i)              @=              R(T^{n + 1})
  \end{CD}
\]

In other words,
for any $[ E^0 ] - [ E^1 ] \in K_{T^{n+1}}(\mathbb{CP}^n)$,  the following equation holds:
\[
  \chi( E^0|_{p_i} ) - \chi( E^1|_{p_i} ) = \alpha ( [ E^0 ] ) |_{t = t_i} - \alpha ( [ E^1 ] ) |_{t = t_i} 
\]
in $R(T^{n+1}) \cong \mathbb{Z} \left[ t_1^{\pm 1} ,\, \dots ,\, t_{n+1}^{\pm 1} \right]$ for $1 \leq i \leq n+1$.
\end{lem}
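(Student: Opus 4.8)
The plan is to establish the commutativity of the diagram by proving the equality of ring homomorphisms $\iota_{p_i}^* \circ \beta = \mu_i \colon R(T^{n+1}\times S^1)/(\lambda_{n+1}(-1)) \to R(T^{n+1})$; the displayed identity for a class $[E^0]-[E^1]$ then follows upon writing $\alpha([E^j]) = [V^j] \bmod \lambda_{n+1}(-1)$, applying the equality to $[V^0]$ and $[V^1]$, and reading off characters. Because $\beta$ is given explicitly in Remark \ref{beta}, it suffices to compute, for an arbitrary finite dimensional representation $V$ of $T^{n+1}\times S^1$, the fiber of the associated bundle $S^{2n+1}\underset{S^1}{\times}V$ over the fixed point $p_i$ together with its residual $T^{n+1}$-action, and to show that its character is obtained from $\chi^{T^{n+1}\times S^1}(V)$ by the substitution $t = t_i$.

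First I would fix the $i$-th standard basis vector $e_i \in \mathbb{C}^{n+1}$, which has unit norm and satisfies $p(e_i) = p_i$, and use it to identify $V \xrightarrow{\cong} \bigl( S^{2n+1}\underset{S^1}{\times}V \bigr)\big|_{p_i}$ by $\xi \mapsto [(e_i,\xi)]$. The crucial step is to trace the $T^{n+1}$-action through this identification. Setting the $S^1$-coordinate to $1$ in the action recalled above, $g = (u_1,\dots,u_{n+1}) \in T^{n+1}$ sends $e_i$ to $u_i e_i$, so $g\cdot[(e_i,\xi)] = [(u_i e_i,\, g\cdot\xi)]$. Applying the equivalence relation $(v,\xi)\sim(u^{-1}v,\, u\cdot\xi)$ of Remark \ref{beta} with $u = u_i$ brings the first coordinate back to $e_i$ and gives $g\cdot[(e_i,\xi)] = [(e_i,\, u_i\cdot(g\cdot\xi))]$. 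Hence the residual $T^{n+1}$-representation on the fiber is the pullback of $V$ along the homomorphism $(u_1,\dots,u_{n+1}) \mapsto (u_1,\dots,u_{n+1},u_i)$ of $T^{n+1}$ into $T^{n+1}\times S^1$.

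On characters this pullback replaces the $S^1$-variable by $u_i$, which is exactly the substitution $t = t_i$; thus $\chi\bigl( (S^{2n+1}\underset{S^1}{\times}V)|_{p_i} \bigr) = \chi^{T^{n+1}\times S^1}(V)\big|_{t=t_i}$. Feeding $V = V^0$ and $V = V^1$ into the explicit description of $\beta$ then yields $\iota_{p_i}^*\bigl( \beta([V^0]-[V^1]) \bigr) = ([V^0]-[V^1])|_{t=t_i}$, i.e.\ $\iota_{p_i}^*\circ\beta = \mu_i$, as desired.

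The main difficulty I anticipate is purely the bookkeeping of action conventions. Since the left $S^1$-action on $S^{2n+1}$ is by $a\cdot v = a^{-1}v$ and the equivalence relation in the associated bundle is twisted to match (Remark \ref{action} and Remark \ref{beta}), one must check that the inverses cancel so that the substitution emerges as $t = t_i$ and not $t = t_i^{-1}$; verifying that $u_i e_i$ is $S^1$-related to $e_i$ in the correct direction is the delicate point. It remains only to note that $\mu_i$ is well defined modulo $\lambda_{n+1}(-1)$, since $\lambda_{n+1}(-1)|_{t=t_i} = \prod_{j=1}^{n+1}(1 - t_j^{-1}t_i)$ vanishes through its $j=i$ factor, as already observed before the statement.
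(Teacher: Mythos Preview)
Your proof is correct and follows the same overall strategy as the paper: both arguments compute the $T^{n+1}$-representation on the fiber $\bigl(S^{2n+1}\underset{S^1}{\times}V\bigr)\big|_{p_i}$ and identify it with the pullback of $V$ along $g\mapsto(g,u_i)$. The execution differs slightly. The paper first decomposes $V=\bigoplus_l W_l\otimes t^l$ into $S^1$-weight spaces, writes the associated bundle as $\bigoplus_l \mathcal{W}_l\otimes\mathcal{L}(l)$, and then computes $\chi(\mathcal{L}(l)|_{p_i})=t_i^l$ via an explicit local trivialization $[v,\xi]\mapsto\bigl([v],(v_i/|v_i|)^l\xi\bigr)$ on $U_i$. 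You bypass both the weight decomposition and the local trivialization by working directly with the class $[(e_i,\xi)]$ and chasing the equivalence relation $(v,\xi)\sim(u^{-1}v,\,u\cdot\xi)$; this is more economical and handles a general $V$ in one stroke, while the paper's version has the modest advantage of making the special case $V=t^l$ (and hence the contrast with $\chi(\mathcal{O}(l)|_{p_i})=t_i^{-l}$) explicit. Your caution about the sign conventions is well placed, and your verification that $u=u_i$ is the correct choice in the equivalence relation is exactly the point where the paper's local-trivialization formula does the same work.
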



\begin{proof}
If we write 
\begin{equation} \label{alpha1}
  \alpha ( [ E^0 ] - [ E^1 ] ) = [ V^0 ] - [ V^1 ] \bmod \lambda_{n+1}(-1)
\end{equation}
for some finite dimensional representation spaces $V^0$, $V^1$ of $T^{n+1} \times S^1$, 
by the definition of $\mu_i = \cdot |_{t = t_i}$,
we have
\[
  \alpha ( [ E^0 ] ) |_{t = t_i} - \alpha ( [ E^1 ] ) |_{t = t_i} = \chi (V^0)|_{t = t_i} - \chi (V^1)|_{t = t_i} .
\]
Thus it is sufficient to verify the following:
\begin{equation} \label{minigoal}
  \chi \left( E^0|_{p_i} \right) - \chi \left( E^1|_{p_i} \right) = \chi (V^0)|_{t = t_i} - \chi (V^1)|_{t = t_i} .
\end{equation}
From (\ref{alpha1}),
\[
  [ E^0 ] - [ E^1 ] = \beta \left( [ V^0 ] - [ V^1 ] \bmod \lambda_{n+1}(-1) \right) = 
    \left[ S^{2n+1} \underset{S^1}{\times} V^0 \right] - \left[ S^{2n+1} \underset{S^1}{\times} V^1 \right]
\]
in $K_{T^{n+1}}(\mathbb{CP}^n)$.
Let $\displaystyle V^j = \bigoplus_{l \in \mathbb{Z}} W_l^j \otimes t^l$ $(j = 1,\, 2)$, where $W_l^j$ are finite dimensional representation spaces of $T^{n+1}$. As $V^j$ are finite, $W_l^j = 0$ for all but finitely many $l \in \mathbb{Z}$.
Then $S^{2n+1} \underset{S^1}{\times} V^j$ can be written in the following form:
\[
  S^{2n+1} \underset{S^1}{\times} V^j \cong \bigoplus_{l} \mathcal{W}_l^j \otimes \mathcal{L}(l) ,
\]
where $\mathcal{W}_l^j$ are trivial vector bundles over $\mathbb{CP}^n$ whose fibers are $W_l^j$, and $\mathcal{L}(l)$ are complex line bundles defined by $S^{2n+1} \underset{S^1}{\times} t^l$. Note that since the actions of $S^1$ on $S^{2n+1} \times t^l$ are given by
\[
  S^1 \ni a \colon (v ,\, \xi ) \mapsto (a^{-1}v ,\, a^l \xi ) \qquad (v ,\, \xi) \in S^{2n+1} \times t^l,
\]
each $\mathcal{L}(l)$ is isomorphic with $\mathcal{O}(l)$ as line bundles. However the actions of $T^{n+1}$ on fibers over fixed points $p_i$ are different from each other. Indeed, if we use the following local trivializations of $\mathcal{L}(l)$:
\[
  \left. \left( S^{2n+1} \times t^l \right) \right|_{U_i} \ni [ v ,\, \xi ] \longmapsto 
    \left( [ v ] ,\, \left( \frac{ v_i }{ |v_i| } \right)^l \xi \right) \in U_i \times \mathbb{C} ,
\]
where each $U_i$ is an open set of $\mathbb{CP}^n$ used in the proof of Lemma \ref{cohomologyn} and $v = ( v_1,\, \dots v_{n+1} ) \in S^{2n+1}$,
we obtain that $\chi (\mathcal{L}(l) |_{p_i}) = t_i^l$ (incidentally, $\chi (\mathcal{O}(l)|_{p_i} ) = t_i^{-l} $). 
Therefore we have (\ref{minigoal}) as follows:
\begin{align*}
  \chi \left( E^0|_{p_i} \right) - \chi \left( E^1|_{p_i} \right) 
  &= \chi \left( \left. \left( S^{2n+1} \underset{S^1}{\times} V^0 \right) \right|_{p_i} \right) 
    - \chi \left( \left. \left( S^{2n+1} \underset{S^1}{\times} V^1 \right) \right|_{p_i} \right) \\
  &= \chi \left( \left. \left( \bigoplus_{l} \mathcal{W}_l^0 \otimes \mathcal{L}(l) \right) \right|_{p_i} \right) 
    - \chi \left( \left. \left( \bigoplus_{l} \mathcal{W}_l^1 \otimes \mathcal{L}(l) \right) \right|_{p_i} \right) \\[3pt]
  &= \sum_{l} \chi(W_l^0) t_i^l - \sum_{l} \chi(W_l^1) t_i^l \\
  &= \chi(V^0)|_{t = t_i} - \chi(V^1)|_{t = t_i} 
\end{align*}
for $1 \leq i \leq n+1$.
\end{proof}

Finally, we prove Corollary \ref{application}.

\begin{proof}[Proof of Corollary \ref{application}]
From Lemma \ref{ringisom} and Remark \ref{beta} (\ref{RImodule}), for any $\eta = [ E^0 ] - [ E^1 ] \in K_{T^{n+1}}(\mathbb{CP}^n)$, we have
\begin{equation} \label{diag}
  \ind_{T^{n+1} \times S^1} \left( p^*[ D_n ] \otimes p^*\eta \right) = 
    \alpha \left( \eta \right) \ind_{T^{n+1} \times S^1} \left( p^*[ D_n ] \right)
\end{equation}
as linear maps $\mathcal{D}(S^1) \to \mathcal{D}(T^{n+1})$.
Moreover if we denote by $[ D_n^{\eta} ]$ the class of $[ D_n ] \otimes \eta$ in $K_{T^{n+1}}(T^* \mathbb{CP}^n)$, then $p^*[ D_n ] \otimes p^*\eta = p^*[ D_n^{\eta} ]$. Thus from Corollary \ref{spfap}, we obtain that
\[
  \ind_{T^{n+1} \times S^1} \left( p^*[ D_n^{\eta} ] \right) = 
    \sum_{m \in \mathbb{Z}} \ind_{T^{n+1}} \left( [ D_n^{\eta} ] \otimes \underline{t}^{-m} \right) t^m
\]
and from Theorem \ref{mainthm}, we obtain that
\[
  \ind_{T^{n+1} \times S^1} \left( p^*[ D_n ] \right) = J(\lambda_{n+1}(-1)) .
\]
Thus substituting the constant function $1$ on $S^1$ for (\ref{diag}) and using Lemma \ref{residue}, we have
\[
  \ind_{T^{n+1}} \left( [ D_n ] \otimes \eta \right) = 
    - \sum_{t = \xi \in S^1} \res \left( \frac{\alpha ( \eta ) }{\lambda_{n+1}(-1)} \frac{1}{t} \, ; \, \xi \right) .
\]
If $t_i \neq t_j$ for $i \neq j$, the residues in the above are computable explicitly. Indeed, since $\displaystyle \lambda_{n+1}(-1) = \prod_{j = 1}^{n+1} ( 1 - t_j^{-1} t)$ and $\alpha(\eta)$ is a Laurent polynomial in variables $t_1,\, \dots ,\, t_{n+1}$ and $t$, we conclude that $\frac{\alpha ( \eta ) }{\lambda_{n+1}(-1)} \frac{1}{t}$ does not have poles anywhere except at $t = t_i (i = 1,\, \dots ,\, n+1)$ and the orders at these points are at most $1$. Then using Lemma \ref{fiberonfixpt}, we have
\[
  - \res \left( \frac{\alpha ( [ E^0 ]) - \alpha ( [ E^1 ] ) }{\lambda_{n+1}(-1)} \frac{1}{t} \;\; ; \; t_i \right) = 
    \frac{ \chi ( E^0|_{p_i} ) - \chi ( E^1|_{p_i} )}{\displaystyle \prod_{j \neq i} \left( 1 - t_j^{-1} t_i \right)}
\]
for $i = 1,\, \dots ,\, n+1$.
\end{proof}

\vspace{1cm}

Department of Mathematics, Faculty of Science, Kyoto University

Sakyo-ku, Kyoto 606-8502, JAPAN

miseki@math.kyoto-u.ac.jp

\end{document}